\def\ess~inf{\mathop{\rm ess~inf}}
\numberwithin{equation}{section}
\newenvironment{key words}{\emph{\texttt{Keywords}}\mbox{  }}{ }
\newtheorem{theorem}{Theorem}[section]
\newtheorem{lemma}[theorem]{Lemma}
\newtheorem{proposition}[theorem]{Proposition}
\renewenvironment{proof}{\noindent{\textbf{Proof.}}}{\hfill$\Box$}
\theoremstyle{remark}
\newtheorem{remark}[theorem]{\textbf{Remark}}
\theoremstyle{plain}
\newcommand{\Rmnum}[1]{\expandafter\@slowromancap\romannumeral #1@}
\begin{document}

\fancyhf{}

\fancyhead[EC]{W. Li and H. Wang}

\fancyhead[EL]{\thepage}

\fancyhead[OC]{Maximal functions along finite type curves}

\fancyhead[OR]{\thepage}

\renewcommand{\headrulewidth}{0.5pt}
\renewcommand{\thefootnote}{\fnsymbol {footnote}}
\title
{\textbf{On $L^{p}$-improving bounds for maximal operators associated with curves  of finite type  in the plane} \thanks{This work is supported by Natural Science Foundation of China (No.12301113; No.12271435).}}

\author{Wenjuan Li \\
 \small{School of Mathematics and Statistics, Northwestern Polytechnical University,
 Xi'an, 710129, China}  \\
 Huiju Wang \thanks{Corresponding author's email address: huijuwang@mail.nwpu.edu.cn. }\\
\small{School of Mathematics and Statistics, Henan University, Kaifeng, 475000, China}  }

\date{}
 \maketitle

{\bf Abstract:} In this paper,  we study the $L^{p}$-improving property for the maximal operators along a large class of curves of finite type in the plane with dilation set $E \subset [1,2]$. The $L^{p}$-improving region depends on the order of finite type and the fractal dimension of $E$. In particular, various impacts of non-isotropic dilations are also  considered.

{\bf Keywords:} Maximal functions, $L^{p}$-improving bounds, finite type, non-isotropic dilation.

{\bf Mathematics Subject Classification}: 42B20, 42B25.

\section{Introduction}

The study on maximal averages over dilated submanifolds has a long history. The ingredients that impact on the bounded exponent include, but are not limited to, type of dilation family (single parameter or not, isotropic dilation or not), dilation set (positive axis, compact interval or fractal set),
submanifold (homogeneity or not,  through the origin or not, non-vanishing Gaussian curvature or not),
measure (weighted measure or not, fractal measure or not). For more detailed information, one can refer to some articles, such as Iosevich-Sawyer \cite{ios2}, Ikoromov-Kempe-M\"{u}ller \cite{IKMU},  Roos-Seeger\cite{Roos}, etc.

Until now, the maximal functions along the sphere have been extensively studied, which is mainly because of the key geometric feature of the sphere: non-vanishing Gaussian curvature everywhere.
Let $n\geq 2$. Given a set $E\subset (0,+\infty)$,
we define the spherical maximal functions with dilation set $E$   by
\begin{equation*}
\mathcal{M}^E_{\mathbb{S}^{n-1}}f(y)=\sup_{t\in E} \biggl|\int_{\mathbb{S}^{n-1}}f(y-tx)d\sigma(x)\biggl|, \hspace{0.2cm}y\in \mathbb{R}^n,
\end{equation*}
here $d\sigma$ denotes the normalized surface measure on the unit sphere $\mathbb{S}^{n-1}$. We introduce some known results without endpoints related to $\mathcal{M}^E_{\mathbb{S}^{n-1}}$ in Case(1)-Case(5) below.\\
\textbf{Case(1)} $E=$\{Single point\}. Strichartz \cite{Strichartz} established  the $L^p\rightarrow L^q$ estimate for the single averaging operator  if $(\frac{1}{p},\frac{1}{q})$ belongs to the closed triangle with corners $(0,0)$, $(1,1)$ and $(\frac{n}{n+1},\frac{1}{n+1})$. \\
\textbf{Case(2)} $E=(0,+\infty)$. Stein \cite{stein3} ($n\geq 3$) and Bourgain \cite{JB} ($n=2$) successively proved that the global spherical maximal operator is $L^p\rightarrow L^p$ bounded if and only if $p>\frac{n}{n-1}$. For $n=2$, Mockenhaupt-Seeger-Sogge \cite{mss2} gave an alternative proof.\\
\textbf{Case(3)} $E=[1,2]$. Let $\mathcal{Q}$ be the closed quadrangle with corners $P_1=(0,0)$, $P_2=(\frac{n-1}{n},\frac{n-1}{n})$, $P_3=(\frac{n-1}{n},\frac{1}{n})$ and $P_4=(\frac{n(n-1)}{n^2+1},\frac{n-1}{n^2+1})$. In \cite{WS1,WS2}, Schlag and Sogge  proved that the corresponding local maximal operator is  $L^p\rightarrow L^q$ bounded when $(\frac{1}{p},\frac{1}{q})$ lies in the interior of the closed quadrangle $\mathcal{Q}$, and it fails when  $(\frac{1}{p},\frac{1}{q})$ belongs to the exterior. More related results can be found in \cite{LWZ}.\\
\textbf{Case(4)} $E\subset(0,\infty)$. For $\delta>0$, let $N(E,\delta)$ be the minimal number of $\delta$-intervals needed to cover $E$, and  define $\kappa(E)=\overline{\lim}_{\delta\rightarrow 0}[\sup_k h(E\cap I_k, \delta)]$, where $I_k=[2^k,2^{k+1}]$ and $h(E,\delta)=\frac{\log N(E,\delta)}{-\log \delta}$. Seeger-Wainger-Wright \cite{Seeger2} characterized  the $L^p$-boundedness of the local maximal operator $\mathcal{M}^E_{\mathbb{S}^{n-1}}$ up to the endpoint $p=1+(n-1)^{-1}\kappa(E)$.\\
\textbf{Case(5)} $E\subset[1,2]$. Denote by $\beta$ the (upper) Minkowski dimension of $E$, and by  $\gamma$ the (upper) Assouad dimension of $E$. Let $\mathcal{Q}^n(\beta,\gamma)$ be the closed convex hull of the points
\[Q_1=(0,0), \quad \quad Q_2=(\frac{n-1}{n-1+\beta},\frac{n-1}{n-1+\beta}),\]
\[Q_3=(\frac{n-\beta}{n-\beta+1},\frac{1}{n-\beta+1}), \quad \quad Q_4=(\frac{n(n-1)}{n^2+2\gamma-1},\frac{n-1}{n^2+2\gamma-1}).\]
Denote by $\mathcal{R}^n(\beta,\gamma)$ the union of the interior of $\mathcal{Q}^n(\beta,\gamma)$  with the line segment connecting $Q_1$ and $Q_2$, including $Q_1$, but excluding $Q_2$. For $n\geq 3$, Anderson-Hughes-Roos-Seeger \cite{AHRS} proved that if $(\frac{1}{p},\frac{1}{q})\in \mathcal{R}^n(\beta,\gamma)$, then $\mathcal{M}^E_{\mathbb{S}^{n-1}}$ is $L^p\rightarrow L^q$ bounded. Their results also include the case for $\gamma\leq 1/2$ in two dimension. For the more difficult case $n=2$ and $\gamma>1/2$,  Roos-Seeger \cite{Roos} demonstrated its validity.

For maximal functions along curves or surfaces of finite type, where the Gaussian curvature is allowed to vanish at some points, the corresponding results in Case(1) and Case(2) can be found in \cite{IKMU,I,ios1,ios2,ios3}. Recently, Li-Wang-Zhai \cite{LWZ} generalized some spherical results with isotropic dilations from Case(3) to more general curves or surfaces, and considered how non-isotropic dilations influence the boundedness for these local maximal operators. Moreover, they also obtained the sparse domination and weighted estimates for the corresponding global maximal operators.

However, to our best knowledge, the corresponding results for maximal functions along curves or surfaces of finite type still remain open in Case(4) and Case(5). In this paper, we concentrate  on Case(5), and characterize the $L^{p}$-improving property of the maximal function along curves of finite type in the plane over the dilation set $E \subset [1,2]$. We also study the various impacts of non-isotropic dilations. As a corollary, we get the sparse domination of the corresponding maximal functions over the dilation set $\{2^{k}t: t \in E \subset [1,2], k \in \mathbb{Z}\}$.

In order to describe our results, we first introduce some notations. We define the following regions of boundedness exponents that will be referred to later on:
\begin{equation}
\Delta:= \left\{(\frac{1}{p},\frac{1}{q}) \in \mathcal{R}^2(\beta,\gamma): \frac{d+1}{p} - \frac{d+1}{q} -1<0 \right\},
\end{equation}
and
\begin{align}
\widetilde{\Delta} := &\biggl\{(\frac{1}{p},\frac{1}{q}) \in \mathcal{R}^2(\beta,\gamma): \frac{1}{q} > \frac{d+1}{d-d\beta+1}\bigl(\frac{1}{p} -\frac{1}{d+1} \bigl) ,\nonumber \\
& \hspace{0.2cm} \textmd{and} \hspace{0.2cm} \frac{1}{q}> \frac{d \gamma + \beta}{2(d \gamma + \beta)-\beta(d+1+d \gamma) }\biggl( \frac{1}{p} -\frac{\beta}{d\gamma + \beta} \biggl) \biggl\}.
\end{align}
Here, $\beta \in (0,1)$ and $\gamma \in (0,1)$ are the upper Minkowski dimension and the upper Assouad dimension of $E$,  respectively. We recall their definitions for the reader's convenience.  For a set $E\subset \mathbb{R}$ and $\delta>0$, we denote by $N(E,\delta)$ the minimal number of compact intervals of length $\delta$ needed to cover $E$. The upper Minkowski dimension of a compact set $E$ is the smallest $\beta$ so that there is an estimate
\begin{equation}\label{Minkowski}
N(E,\delta)\leq C(\epsilon)\delta^{-\beta-\epsilon}
 \end{equation}
for all $0< \delta <1$ and $\epsilon>0$. The upper Assouad dimension is the smallest number $\gamma$ so that there exists $\delta_0>0$, and a constant $C_{\epsilon}$ for all $\epsilon>0$ such that for all $\delta\in (0,\delta_0)$ and all intervals $I$ of length $|I|\in(\delta,\delta_0)$ we have
\begin{equation}\label{Assouad}
N(E\cap I,\delta)\leq C_{\epsilon}(\delta/|I|)^{-\gamma-\epsilon}.
 \end{equation}
The more details can be found in \cite{AHRS} and \cite{Roos}.

Now we list our main results  as follows.

Let $\phi \in C^{\infty}(I,\mathbb{R})$, where $I$ is a bounded interval containing the origin, and
\begin{equation}\label{phi}
\phi(0)\neq 0; \hspace{0.1cm}\phi^{(j)}(0)= 0, \hspace{0.1cm}j=1,2,\cdots,m-1; \hspace{0.1cm}\phi^{(m)}(0)\neq 0 \hspace{0.1cm}(m\geq 1).
 \end{equation}
We study the $L^p \rightarrow L^{q}$ estimates for  local maximal functions with non-isotropic dilation $\delta_{t}(y) = (t^{a_{1}}y_1, t^{a_{2}}y_2)$ along the curves $(x,x^{d}\phi(x) +c)$ of finite type $d$ for $d \ge 2$, $d \in \mathbb{N}^{+}$ and $c \in \mathbb{R}$. Throughout this paper, $a_{1},a_{2}$ are positive real numbers.
Firstly, we consider the maximal operators when $d a_1 \neq a_2$.

\begin{theorem}\label{maintheorem1}
Define the averaging operator
\begin{equation}\label{averagefinite1}
A_{t}f(y):= \int_{\mathbb{R}} f(y_1-t^{a_{1}}x,y_2-t^{a_{2}}(x^d\phi(x) +c))\eta(x)dx, \quad da_{1} \neq a_{2},
\end{equation}
where $\eta$ is supported in a sufficiently small neighborhood of the origin. We can obtain the maximal inequality
$\|\mathop{sup}_{t\in E}|A_{t}|\|_{L^{p} \rightarrow L^{q} }\le C_{p,q}$
if one of the following conditions holds:

\textbf{(C1)} $c =0$ and $(\frac{1}{p},\frac{1}{q}) \in \Delta $;

\textbf{(C2)} $c \neq 0$  and $(\frac{1}{p},\frac{1}{q}) \in \widetilde{\Delta}$.

\end{theorem}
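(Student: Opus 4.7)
The plan is to adapt the frequency-decomposition and fractal-averaging framework developed by Anderson--Hughes--Roos--Seeger \cite{AHRS} and Roos--Seeger \cite{Roos} for the spherical maximal function with fractal dilation sets, suitably modified to accommodate the finite-type curve $(x, x^{d}\phi(x)+c)$ and the non-isotropic dilation $\delta_{t}(y)=(t^{a_1}y_1,t^{a_2}y_2)$ with $da_1\neq a_2$. First I would perform a Littlewood--Paley decomposition in frequency, writing $A_{t}f = A_{t}^{\,0}f + \sum_{k\ge 1} A_{t}^{\,k}f$, where $A_{t}^{\,k}$ isolates the piece with $|\xi|\sim 2^{k}$. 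The low-frequency term $A_{t}^{\,0}$ is controlled by the Hardy--Littlewood maximal function of a Schwartz majorant. For the high-frequency pieces, the relevant multiplier is
\[
\widehat{d\mu_{t}}(\xi) \;=\; e^{-2\pi i c\, t^{a_{2}}\xi_{2}}\int e^{-2\pi i\bigl(t^{a_{1}}x\,\xi_{1} + t^{a_{2}} x^{d}\phi(x)\,\xi_{2}\bigr)}\eta(x)\,dx,
\]
and a Van der Corput / stationary phase analysis on the phase, combined with a decomposition into frequency cones adapted to the mismatch $da_1\neq a_2$, produces the decay $|\widehat{d\mu_{t}}(\xi)|\lesssim (1+|\xi|)^{-1/(d+1)}$ in the oscillatory region, together with rapid decay away from the critical set by integration by parts.

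Next I would establish the required single-scale bounds. Via the Fourier decay one obtains a baseline $L^{2}\to L^{2}$ estimate with gain $2^{-k/(d+1)}$, as well as an $L^{p}\to L^{q}$ estimate that corresponds to the "Strichartz-type" constraint $\frac{d+1}{p}-\frac{d+1}{q}-1<0$; combined with the trivial $L^{\infty}\to L^{\infty}$ bound and a $TT^{\ast}$ argument at $L^{2}$, interpolation gives $L^p\to L^q$ control at the corners of the region. To pass to the maximal function over $t\in E$ I would discretize $E$ at scale $2^{-k}$: cover $E$ by $\lesssim 2^{k\beta+\epsilon}$ intervals using the upper Minkowski dimension \eqref{Minkowski}, and use the upper Assouad dimension \eqref{Assouad} to handle the $L^{2}$/local smoothing input via a refined square function as in \cite{Roos}. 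A $t$-derivative bound lets one replace the supremum by $\ell^{q}$-summation over sample points $t_j$, yielding, schematically, an estimate of the form
\[
\Big\|\sup_{t\in E}|A_{t}^{\,k}f|\Big\|_{L^{q}} \;\lesssim\; 2^{k\Phi(1/p,1/q;\beta,\gamma,d,a_{1},a_{2})}\,\|f\|_{L^{p}},
\]
where $\Phi$ is an explicit affine functional of $(1/p,1/q)$. Summing over $k\ge 0$ requires $\Phi<0$, and a direct computation identifies this condition with the defining inequalities of $\Delta$ in case \textbf{(C1)}, since then the phase factor $e^{-2\pi i c t^{a_2}\xi_2}$ is absent and the analysis reduces, after the anisotropic rescaling $\tilde x = t^{a_1}x$, to the finite-type analogue of \cite{AHRS,Roos} intersected with the Strichartz half-plane.

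The main difficulty lies in case \textbf{(C2)}, $c\neq 0$. The extra factor $e^{-2\pi i c t^{a_2}\xi_2}$ behaves as a rapid oscillation in $t$ of frequency $\sim t^{a_2-1}|\xi_2|$, which is invisible to the pointwise bound $|\widehat{d\mu_{t}}|$ but is felt strongly by the $t$-derivative used when going from discrete sampling to the continuous supremum. I expect this to force a further decomposition of frequency space into the two cones $\{|\xi_{1}|\gtrsim|\xi_{2}|\}$ (where the curve's own finite-type oscillation dominates) and $\{|\xi_{2}|\gtrsim|\xi_{1}|\}$ (where the $t$-translation term $c t^{a_2}$ dominates), each contributing a different linear constraint on $(1/p,1/q)$. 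The two half-plane inequalities defining $\widetilde{\Delta}$ should then emerge, respectively, from balancing the $t$-derivative gain $t^{a_2}|\xi_2|$ against the dimensional parameter $\beta$ in the first cone, and from balancing it against $\gamma$ (in the Assouad square-function step) in the second. Tracking the exponents through the anisotropic scaling dictated by $da_{1}\neq a_{2}$, and verifying that interpolation across the two regimes is sharp enough to produce exactly the stated slopes $\frac{d+1}{d-d\beta+1}$ and $\frac{d\gamma+\beta}{2(d\gamma+\beta)-\beta(d+1+d\gamma)}$, is where the bulk of the technical work will lie.
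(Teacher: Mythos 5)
Your plan captures the general philosophy (Littlewood--Paley decomposition, Fourier decay, Minkowski/Assouad-based discretization of $E$, interpolation), but it misses the single most important structural step of the paper and as a result the rest would not go through as sketched.

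The paper does \emph{not} start with a frequency decomposition alone. It first decomposes the $x$-support dyadically, writing $\eta(x)=\sum_{k\ge\log(1/\epsilon)}\tilde\rho(2^kx)\eta(x)$, and then applies the isometric rescaling on $L^p(\mathbb{R}^2)$ that turns the $k$-th piece into an average over the curve $x\mapsto (x, x^d\phi(2^{-k}x)+c\,2^{dk})$ with $|x|\sim 1$. Only \emph{after} this does it perform the frequency decomposition $\psi(2^{-j}|\delta_t\xi|)$, yielding $\widetilde{A_{t,j}^{k}}$. This spatial step is what makes the curve have curvature comparable to $1$ on each piece, so that after stationary phase the operator $A_{t,j}^k$ has the clean $L^2$ gain $2^{-j/2}$ (the paper's (2.10)), and more importantly so that the locally-constant lemma (Lemma \ref{locallyconstant}) and the $L^2$-almost-orthogonality arguments (Propositions \ref{keylemma1}--\ref{keylemma2}) can run with a uniformly nondegenerate phase. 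If you decompose in frequency only, as you propose, the flat point $x=0$ still sits inside the support of $\eta$ at every frequency scale, the stationary point moves continuously through the degenerate region, and the almost-orthogonality estimate needed for the Assouad/local-smoothing corner fails --- the relevant Hessian degenerates at $x=0$ regardless of $|\xi|$. The claimed pointwise decay $|\widehat{d\mu_t}(\xi)|\lesssim(1+|\xi|)^{-1/(d+1)}$ is also not correct: Van der Corput gives $|\xi|^{-1/d}$ for a degree-$d$ finite type curve, and the corner $(\tfrac{d}{d+1},\tfrac{1}{d+1})$ arises by interpolation, not as a pointwise decay exponent.

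Your treatment of the case $c\neq 0$ is also not what is going on. After the spatial rescaling, the translation $c$ becomes $c\,2^{dk}$, which inserts the factor $e^{-it^{a_2}c2^{dk}\xi_2}$ into $\widetilde{A_{t,j}^{k}}$. The paper does not split frequency space into the two cones $\{|\xi_1|\gtrsim|\xi_2|\}$ and $\{|\xi_2|\gtrsim|\xi_1|\}$; instead it feeds the extra oscillation into the locally-constant lemma, so that $|\widetilde{A_{t,j}^k}f|$ is essentially constant on $t$-intervals of length $2^{-j}(c2^{dk}+1)^{-1}$ rather than $2^{-j}$. Discretizing $E$ at that scale and invoking the Minkowski and Assouad bounds introduces the extra factors $(c2^{d\beta k/q}+1)$, resp.\ $(c2^{d\gamma k/q}+1)$, into the $j$- and $k$-independent estimates (Lemmas \ref{mainlemma1}--\ref{mainlemma3}). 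The two slopes $\tfrac{d+1}{d-d\beta+1}$ and $\tfrac{d\gamma+\beta}{2(d\gamma+\beta)-\beta(d+1+d\gamma)}$ appearing in $\widetilde\Delta$ are then exactly the conditions under which the geometric series in $k$ converges in (\ref{cneq0e1}) and (\ref{cneq0e2}). Your cone decomposition is speculative and not shown to reproduce those slopes. Finally, you should also note that the constraint $da_1\neq a_2$ is not merely a technical convenience: it is precisely what guarantees the $t$-derivative of the stationary-phase contribution is of size $\sim 2^j$ (times $c2^{dk}+1$ for the translation), which drives the locally-constant scale; the case $da_1=a_2$ requires a genuinely different argument (Theorem \ref{hhisomaintheorem}).
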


\begin{remark}
 From the proof of  Theorem \ref{maintheorem1}, we can observe that the corresponding $L^{p} \rightarrow L^{q}$  estimates hold for the maximal operator along the finite type curves $(x,x^{d}+c)$, $c \in \mathbb{R}$. In particular, we can  construct some counterexamples to show that the corresponding result is almost sharp, see Section
\ref{sharpresultsection} below.
\end{remark}

\begin{remark}
When $d =2$ and $\gamma\in [0,1/2]$, the result in Theorem \ref{maintheorem1} coincides with the 2-dimensional result in {\cite[Theorem 1]{AHRS}}. When $d =2$, $\gamma\in (1/2,1)$ and $\gamma$ equals to the quasi-Assouad dimension of $E$, the result in Theorem \ref{maintheorem1} coincides with the 2-dimensional result in {\cite[Theorem 1.1]{Roos}}. In addition, Theorem  \ref{maintheorem1} connects the corresponding results when $E=\{\text{Single point}\}$ and  $E=[1,2]$, which are obtained by Iosevich-Sawyer \cite{ios3} and Li-Wang-Zhai \cite{LWZ}, respectively.
\end{remark}

 The proof of  Theorem \ref{maintheorem1} was inspired by \cite{AHRS} and \cite{Roos}. We first decompose the support of $\eta$ dyadically and make good use of isometric transform on $L^{p}(\mathbb{R}^{2})$, so that in the corresponding dyadic operator the curve will be away from the flat point $x =0$. Then we decompose the Fourier side of $f$ into dyadic annulis, and define the operator $\widetilde{A_{t,j}^{k}}$, $k \gg 1$, by equality (\ref{decomposeoperator1}) and equality (\ref{decomposeoperator3}) for $j \ge 1$ and $j =0$, respectively. According to the condition $da_{1} \neq a_{2}$, a locally constant property established in Lemma \ref{locallyconstant} below implies that $\widetilde{|A_{t,j}^{k}}f|$ can be viewed as a constant in an interval of length $2^{-j}(c2^{dk}+1)^{-1}$. Next, we will establish an $L^2$-orthogonality estimate (Proposition \ref{keylemma1}) for a family of Fourier integral operators, whose phase function is different from that in {\cite[Proposition 4.2]{Roos}}.

\begin{center}
\includegraphics[height=8cm]{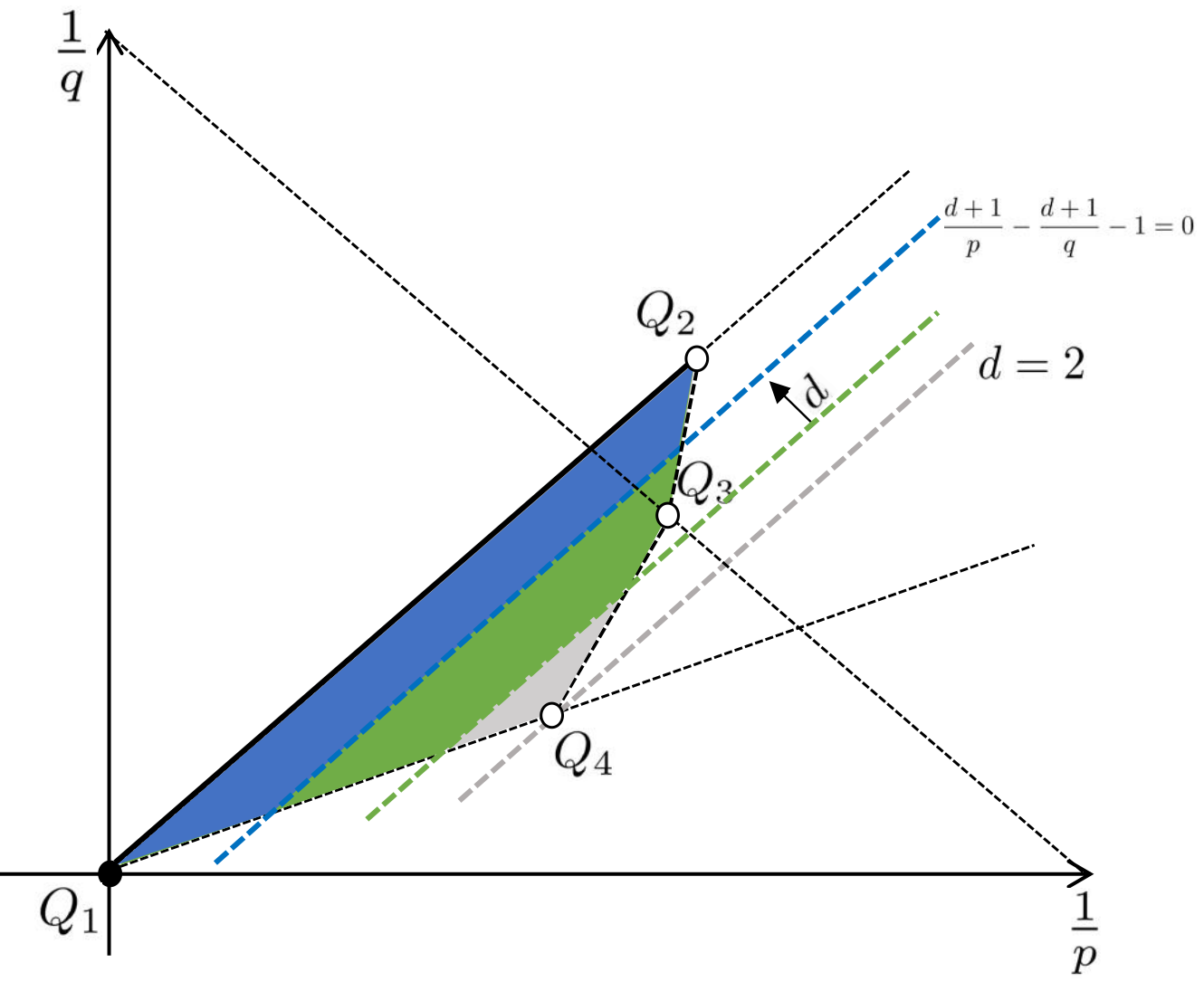}
\end{center}
\begin{center}
Figure 1. \textbf{(C1)} The region $\Delta$ of bounded exponents will  shrink  diagonally when $d$ becomes larger. If $d=2$, the bounded region $\Delta$ is the quadrilateral $\mathcal{R}^2(\beta,\gamma)$ (Grey+Green+Blue). As $d$ becomes larger, $\Delta$ turns to the pentagon (Green+Blue), then turns to the quadrilateral (Blue).
\end{center}

\begin{center}
\includegraphics[height=8cm]{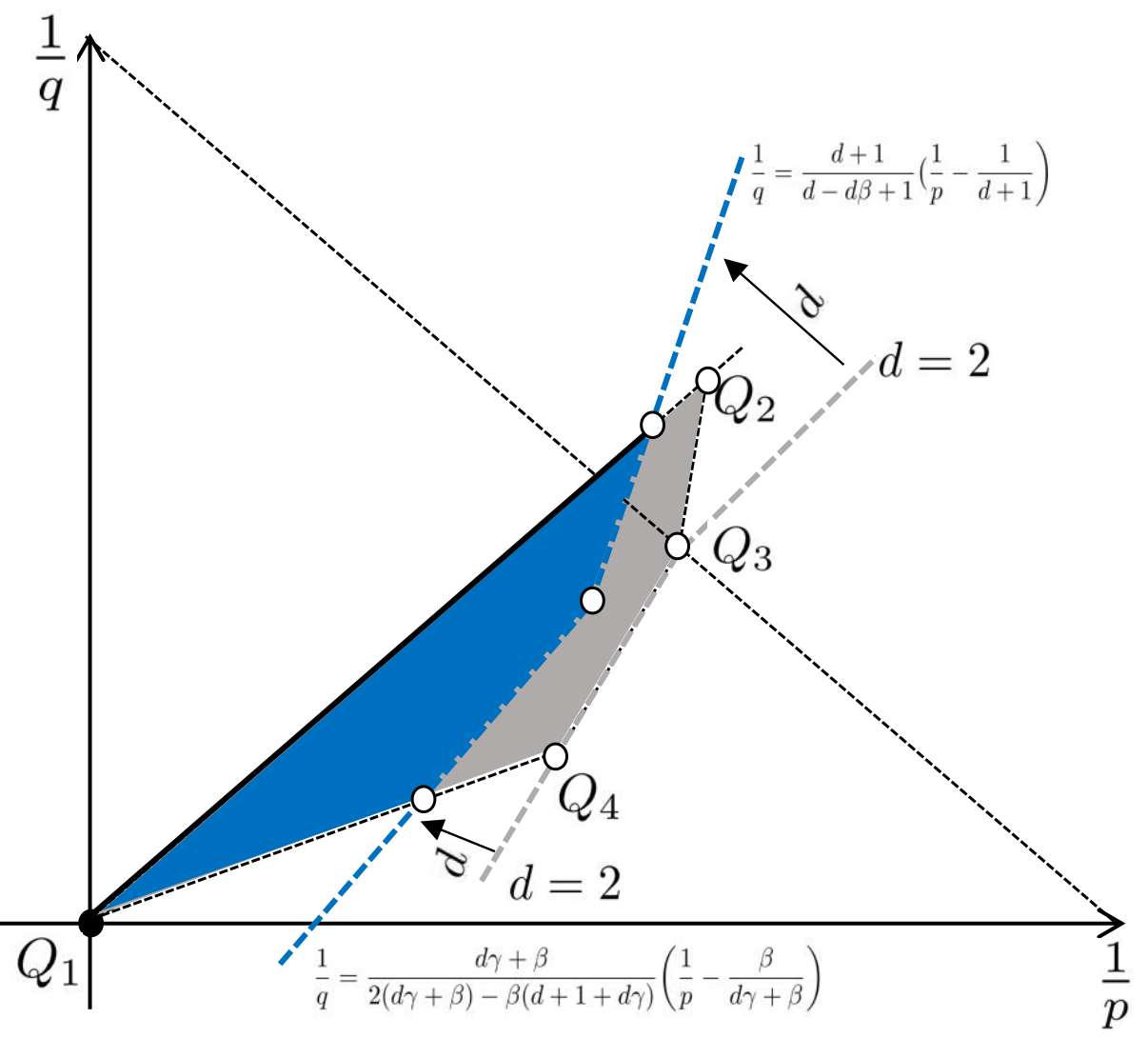}
\end{center}
\begin{center}
Figure 2. \textbf{(C2)} The region $\tilde{\Delta}$ of bounded exponents will  shrink when $d$ becomes larger. If $d=2$, the bounded region $\widetilde{\Delta}$ is the quadrilateral $\mathcal{R}^2(\beta,\gamma)$ (Grey+Blue). As $d$ becomes larger, $\widetilde{\Delta}$ turns to the quadrilateral (Blue).
\end{center}

 Now let's turn to consider the case when $da_{1} = a_{2}$.
 Similar to the observation in \cite{LWZ}, we know that non-isotropic dilations have a significant impact on the boundedness for the corresponding maximal operator along the curves of finite type. For example,  let $A_{t}$ be the averaging operator along the homogeneous curve $(x, x^{d})$ defined by
\begin{equation}\label{averagefinite2}
A_{t}f(y):= \int_{0}^{1}f(y_1-tx,y_2-t^dx^d)dx.
\end{equation}
The result from Iosevich-Sawyer \cite{ios3} shows that if the point  $(\frac{1}{p},\frac{1}{q})$ lies within the closed triangle with vertices $(0,0)$, $(1,1)$, $(2/3,1/3)$ and satisfies $\frac{d+1}{p}-\frac{d+1}{q}-1<0$, then there exists a   constant $C_{p,q} >0$ such that  $\|\mathop{sup}_{t\in E}|A_{t}|\|_{L^{p} \rightarrow L^{q} } \le C_{p,q} $. It is clear that the regularity of this result is better than that of the maximal operator considered in the above Theorem \ref{maintheorem1}.  Thus, one would expect that such a good level of regularity could be applied to more general curves. However, this regularity result  no longer holds for the curves $(x,x^{d}\phi(x))$ near the origin with dilation $\delta_{t}(y)=(ty_{1}, t^{d}y_{2})$,  which  can be considered as a small perturbation of the homogeneous curve in (\ref{averagefinite2}). To be more specific, we have obtained the following result for
the corresponding local maximal functions with dilation set $E \subset [1,2]$.

\begin{theorem}\label{hhisomaintheorem}
Define the averaging operator
\begin{equation}\label{averagefinite3}
A_{t}f(y):= \int_{\mathbb{R}} f(y_1-tx,y_2-t^{d} x^d\phi(x) )\eta(x)dx,
\end{equation}
where $\eta$ is supported in a sufficiently small neighborhood of the origin. Then for  $(\frac{1}{p},\frac{1}{q}) \in \Delta$, there exists a constant
$C_{p,q} >0$ such that $\|\mathop{sup}_{t\in E}|A_{t}|\|_{L^{p} \rightarrow L^{q} }\le C_{p,q} $.
\end{theorem}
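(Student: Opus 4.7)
My plan is to exploit the matching of the anisotropic dilation $\delta_{t}(y)=(ty_{1},t^{d}y_{2})$ with the homogeneous part $(x,x^{d})$ of the curve. A dyadic decomposition near the flat point $x=0$ reduces each piece, after rescaling, to a single-scale maximal operator along a smooth curve with non-vanishing curvature; summation then costs the factor $2^{k[(d+1)/p-(d+1)/q-1]}$, which is summable exactly when $(1/p,1/q)\in\Delta$. The locally-constant / $L^{2}$-orthogonality machinery used for Theorem \ref{maintheorem1} is not needed here, because the matching dilations ($a_{2}=da_{1}$) absorb the scaling directly.

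Concretely, pick a bump $\chi$ supported in $\{|x|\sim 1\}$ with $\sum_{k\ge k_{0}}\chi(2^{k}x)=1$ on $\mathrm{supp}(\eta)\setminus\{0\}$ and split $A_{t}=\sum_{k\ge k_{0}}A_{t}^{k}$ accordingly. The substitution $x=2^{-k}z$, together with $t^{d}2^{-dk}=(t2^{-k})^{d}$ (the crucial use of $a_{2}=da_{1}=d$), yields, with $s=t\,2^{-k}$,
\begin{equation*}
A_{t}^{k}f(y)=2^{-k}\int f\bigl(y_{1}-sz,\;y_{2}-s^{d}z^{d}\phi(2^{-k}z)\bigr)\eta(2^{-k}z)\chi(z)\,dz,
\end{equation*}
an average along $\gamma_{k}(z):=(z,z^{d}\phi(2^{-k}z))$, $z\sim 1$. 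Since $\phi(0)\neq 0$, the family $\{\gamma_{k}\}_{k\ge k_{0}}$ converges in $C^{\infty}$ on $\mathrm{supp}(\chi)$ to $(z,z^{d}\phi(0))$ and has curvature $h_{k}''(z)=d(d-1)z^{d-2}\phi(0)+O(2^{-k})$ bounded below uniformly in $k$. A matching change of variables in the target, $F_{k}(y):=f(2^{-k}y_{1},2^{-dk}y_{2})$ and $\widetilde{s}=2^{k}s\in E$, converts the maximal norm into
\begin{equation*}
\bigl\|\sup_{t\in E}|A_{t}^{k}f|\bigr\|_{L^{q}}=2^{-k-k(d+1)/q}\bigl\|\sup_{\widetilde{s}\in E}|\widetilde{B}_{\widetilde{s}}^{k}F_{k}|\bigr\|_{L^{q}},
\end{equation*}
where $\widetilde{B}_{\widetilde{s}}^{k}F(\widetilde{y})=\int F(\widetilde{y}_{1}-\widetilde{s}z,\widetilde{y}_{2}-\widetilde{s}^{d}z^{d}\phi(2^{-k}z))\eta(2^{-k}z)\chi(z)\,dz$ is the single-scale average along $\gamma_{k}$ with $\widetilde{s}\in E\subset[1,2]$. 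Combined with $\|F_{k}\|_{L^{p}}=2^{k(d+1)/p}\|f\|_{L^{p}}$, a uniform-in-$k$ bound $\|\sup_{\widetilde{s}\in E}|\widetilde{B}_{\widetilde{s}}^{k}|\|_{L^{p}\to L^{q}}\le M$ gives
\begin{equation*}
\bigl\|\sup_{t\in E}|A_{t}^{k}f|\bigr\|_{L^{q}}\le M\cdot 2^{k[(d+1)/p-(d+1)/q-1]}\|f\|_{L^{p}},
\end{equation*}
and the geometric series in $k\ge k_{0}$ converges precisely under the condition $(d+1)/p-(d+1)/q-1<0$ that defines $\Delta$.

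What remains is to supply the uniform single-scale bound. For $(1/p,1/q)\in\mathcal{R}^{2}(\beta,\gamma)$ this is a planar cinematic-curvature maximal inequality over the fractal dilation set $E\subset[1,2]$, which is exactly the setting treated by Roos-Seeger \cite{Roos} in the regime $\gamma>1/2$ and by Anderson-Hughes-Roos-Seeger \cite{AHRS} in the regime $\gamma\le 1/2$. Although those papers state their theorems for circles, the underlying Fourier-analytic arguments (frequency-annulus decomposition and the associated $L^{2}$ orthogonality of Fourier integral operators) depend only on a lower bound on the cinematic curvature and on finitely many derivative bounds of the underlying curve, so they transfer to any smooth family of curves satisfying these bounds uniformly; this is the same ingredient already exploited in the proof of Theorem \ref{maintheorem1}. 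The principal technical obstacle is therefore to verify that the constants appearing in the Roos-Seeger / AHRS machinery are indeed uniform along the family $\{\gamma_{k}\}_{k\ge k_{0}}$, which follows from the uniform $C^{\infty}$ bounds on $\mathrm{supp}(\chi)$ recorded above; once this uniformity is in hand, the geometric-series argument delivers the region $\Delta$ at once.
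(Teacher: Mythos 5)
Your rescaling and bookkeeping are correct up through the identity $\|\sup_{t\in E}|A_{t}^{k}f|\|_{L^{q}}\le M_{k}\,2^{k[(d+1)/p-(d+1)/q-1]}\|f\|_{L^{p}}$, but the crucial claim that $M_{k}=\|\sup_{\tilde s\in E}|\widetilde B_{\tilde s}^{k}|\|_{L^{p}\to L^{q}}$ is bounded uniformly in $k$ is false, and this is exactly the difficulty the paper singles out for the case $da_{1}=a_{2}$. You verify that the ordinary curvature of $\gamma_{k}(z)=(z,z^{d}\phi(2^{-k}z))$ is bounded below uniformly in $k$; but the Roos--Seeger/AHRS machinery over a fractal set $E$ needs a lower bound on the \emph{cinematic} curvature, i.e.\ on the $s$-derivative of the stationary-phase-reduced phase, and that quantity degenerates as $k\to\infty$. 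Concretely, if you reparametrize the ``sphere'' of radius $s$ by $u=sz$, it becomes the graph $u\mapsto u^{d}\phi(2^{-k}u/s)$: as $k\to\infty$ these graphs become $s$-independent (all converge to $u\mapsto u^{d}\phi(0)$), so the family barely moves with $s$. Equivalently, after stationary phase the phase has the form (\ref{phasefunction2}): its leading term is $t$-independent, and the $t$-dependence enters only through a term carrying the factor $\delta^{m}=2^{-mk}$. Hence $\partial_{s}\Phi_{k}\sim 2^{-mk}|\xi|$, and for $|\xi|\sim 2^{j}$ no separation of $t,t'$ inside $[1,2]$ gives the $L^{2}$-orthogonality of the pieces once $j\lesssim mk$. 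So the single-scale constant $M_{k}$ does blow up in $k$, and the geometric series no longer delivers $\Delta$.

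The paper addresses precisely this. It splits the frequency scales into $j\le mk+\epsilon'j$ and $j>mk+\epsilon'j$. In the first regime the degenerate cinematic curvature is compensated by an \emph{improved} locally-constant property (Remark \ref{localyconstant+}): $\widetilde{A_{t,j}^{k}}f$ is essentially constant on $t$-intervals of length $2^{-j+mk}$, so the supremum over $E$ costs no more than the supremum over a single interval, and one falls back on the $\sup_{t\in[1,2]}$ bounds of Lemma \ref{mainlemma1hom} (which come from \cite{LWZ}, without any fractal orthogonality). In the second regime one does run the Roos--Seeger scheme, but the intervals $\mathcal{I}_{j,k}(E)$ are taken at scale $2^{-j+mk}$ rather than $2^{-j}$, and the kernel bound (\ref{orthognality1+}) decays in $2^{j-mk-2h}|t_{l}-t_{l'}|$ rather than $2^{j-2h}|t_{l}-t_{l'}|$; Propositions \ref{keylemma1+} and \ref{keylemma2+} track the extra $2^{mk}$ throughout, and this is where the balancing happens. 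Your sentence asserting that ``the locally-constant / $L^{2}$-orthogonality machinery used for Theorem \ref{maintheorem1} is not needed here, because the matching dilations absorb the scaling directly'' is the exact opposite of what happens: the matching $a_{2}=da_{1}$ kills the $t$-dependence of the leading phase, making the single-scale problem \emph{harder} and forcing the refined locally-constant and orthogonality analysis.
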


 Comparing with Theorem \ref{maintheorem1}, the proof of Theorem \ref{hhisomaintheorem} requires a more involved argument. Indeed, after employing the stationary phase method, we need to consider a family of Fourier integral operators related to surfaces whose non-zero principal curvature is dominated by $2^{j-mk}$, $j\geq 1$ and $k$ is large enough,  one can see equality (\ref{phasefunction2}) below for the phase function of the Fourier integral operators. To obtain  the desired estimate for $\gamma \in (1/2,1)$, it is necessary to establish almost orthogonality  between different Fourier integral operators when the variables $t$ and $t^{\prime}$ are sufficiently separated. But this becomes  impossible if $2^{j-mk} \rightarrow 0$.  Taking into account this observation and the locally constant property discussed  in Remark \ref{localyconstant+} below, we consider the cases $j \le mk + \epsilon^{\prime} j$ and $j > mk + \epsilon^{\prime} j$, respectively, where $\epsilon^{\prime} $ can be sufficiently small.

 For $j \leq mk + \epsilon^{\prime} j$, no orthogonality applies in this case. Thanks to the corresponding results related to $\sup_{t \in [1,2]}|\widetilde{A_{t,j}^{k}}f|$ in \cite{LWZ}, please also refer to Lemma \ref{mainlemma1hom} in this paper, we are able to complete the proof of this part.

 For $j > mk + \epsilon^{\prime} j$, we  consider the Fourier integral operators related to surfaces with non-zero principal curvature not very small. However, in the  orthogonality argument, we have to carefully describe the influence caused by $2^{-mk}$. Fortunately, the influence from $2^{-mk}$ is annihilated since the locally constant property for $da_{1}=a_{2}$ enables us to divide the set $E$ by intervals of length $2^{-j+mk}$. The readers can see Proposition \ref{keylemma1+} below for more details.

\begin{remark}
The standard argument as in \cite{Lacey}, \cite{AHRS} and \cite{LWZ} implies sparse domination bounds for the corresponding global maximal operators defined by
\begin{equation*}
\sup_{k\in \mathbb{Z}}\sup_{t\in E}|A_{2^kt}f(y)|,\hspace{0.3cm} y\in \mathbb{R}^2,
\end{equation*}
where the averaging operator is defined by (\ref{averagefinite1}) and (\ref{averagefinite3}), respectively.
\end{remark}

\textbf{Conventions}: Throughout this article, we shall use the notation $A\ll B$, which means that  there is a sufficiently large constant $G$, which is much larger than $1$ and does not depend on the relevant parameters arising in the context in which
the quantities $A$ and $B$ appear, such that $G A\leq B$.  By
$A\lesssim B$ we mean that $A \le CB $ for some constant $C$ independent of the parameters related to  $A$ and $B$.

\section{Preliminaries}
In this section, we will prepare for the proofs of Theorem \ref{maintheorem1} and Theorem \ref{hhisomaintheorem} by introducing some notations and  useful lemmas. Additionally, we will also present some results that have been obtained in \cite{WL,LWZ}.

For the convenience of the proof, we always assume that $\eta$ is supported in the ball $\{x\in \mathbb{R}^2: |x|<\epsilon\}$. We choose  $\tilde{\rho}\in C_0^{\infty}(\mathbb{R})$ such that  supp $\tilde{\rho}\subset\{x: 1/2 \leq|x|\leq 2\}$  and $\sum_{k\in \mathbb{Z}}\tilde{\rho}(2^kx)=1$ for $x\in \mathbb{R}\backslash \{0\}$.
%By isometric transform  on $L^p(\mathbb{R}^2)$, we are reduced to the following averaging operator
%\begin{align*}
%\widetilde{A_t^{k}}f(y)	=\frac{1}{(2\pi)^2}\int_{{\mathbb{R}}^2} e^{i\xi\cdot y} e^{-it^{a_{2}}2^{dk}c \xi_{2}} \int_{\mathbb{R}}e^{-it^{a_{1}}\xi_1x-it^{a_{2}}\xi_{2}x^d\phi(\frac{x}{2^{k}})}\tilde{\rho}(x) dx\hat{f}(\xi)d\xi.
%\end{align*}
Meanwhile, we select a non-negative function $\psi \in C_0^{\infty}(\mathbb{R})$ such that supp $\psi \subset[1/2,2]$ and $\sum_{j\in\mathbb{Z}}\psi (2^{-j}r)=1$ for $r>0$. We define the dyadic operators
\begin{equation}\label{decomposeoperator1}
\widetilde{A_{t,j}^k}f(y)=\frac{1}{(2\pi)^2}\int_{{\mathbb{R}}^2}e^{i\xi\cdot y} e^{-it^{a_{2}}2^{dk}c \xi_{2}} \int_{\mathbb{R}}e^{-it^{a_{1}}\xi_1x-it^{a_{2}}\xi_{2}x^d\phi(\frac{x}{2^{k}})}\tilde{\rho}(x)\eta(2^{-k}x) dx\psi (2^{-j}|\delta_{t}\xi|)\hat{f}(\xi)d\xi
\end{equation}
for $j \ge 1$, and
\begin{equation}\label{decomposeoperator3}
\widetilde{A_{t,0}^k}f(y)=\frac{1}{(2\pi)^2}\int_{{\mathbb{R}}^2}e^{i\xi\cdot y} e^{-it^{a_{2}}2^{dk}c \xi_{2}} \int_{\mathbb{R}}e^{-it^{a_{1}}\xi_1x-it^{a_{2}}\xi_{2}x^d\phi(\frac{x}{2^{k}})}\tilde{\rho}(x)\eta(2^{-k}x) dx
\psi_{0}(|\delta_{t}\xi|)\hat{f}(\xi)d\xi,
\end{equation}
where $\psi_{0}(r) = \sum_{j \le 0} \psi(2^{-j}r)$.
%Then we  denote by $\widetilde{\mathcal{M}_{j}^k}$ and $\widetilde{\mathcal{M}^{k,0}}$ the corresponding maximal operator over $E$, respectively.

Firstly, we state the following observation:  when $da_{1} \neq a_{2}$, $|\widetilde{A_{t,j}^{k}}f|$ can be viewed as a constant when  $t$-variable changes in an interval of length $2^{-j}(c2^{dk} + 1 )^{-1}$. More precisely, we have  the following lemma.

\begin{lemma}\label{locallyconstant}
Let $I$ be an interval contained in $[1,2]$ with a length of $2^{-j}(c2^{dk}+1)^{-1}$.
If $da_{1} \neq a_{2}$, then for each  positive integer $N \gg 2$ and real numbers $q\ge p \ge 1$, there holds
\begin{align}\label{constantinequality}
\biggl\| \sup_{t \in I}|\widetilde{A_{t,j}^{k}}| \biggl\|_{L^{q}(\mathbb{R}^{2})} &\lesssim_{N} 2^{j}(c2^{dk} + 1) \int_{1/2}^{4} \frac{  \|\widetilde{A_{s,j}^{k}}f  \|_{L^{q}(\mathbb{R}^{2})} }{(1+2^{j}(c2^{dk}+1) dist(s,I))^{N}}ds \nonumber\\
&\quad \quad + 2^{-jN}(c2^{dk}+1)^{-N} \|f\|_{L^{p}(\mathbb{R}^{2})}.
 \end{align}
 Here, $dist(s,I)$ represents the distance from $s$-variable to the interval $I$.
\end{lemma}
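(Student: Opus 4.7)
The key observation is that the integrand in $\widetilde{A_{t,j}^{k}}$ depends on $t$ through a phase whose $t$-derivative is of size at most $\Lambda := 2^{j}(c2^{dk}+1)$ on the relevant frequency support, while $\Lambda^{-1}$ matches the length of $I$. The plan is to make the resulting local-constancy of $t\mapsto\widetilde{A_{t,j}^{k}}f(y)$ precise via a pointwise derivative bound followed by a standard one-dimensional Sobolev-with-rapid-decay inequality.

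First I would verify that on the support of $\psi(2^{-j}|\delta_{t}\xi|)$ the estimates $t^{a_{i}}|\xi_{i}|\lesssim 2^{j}$, $i=1,2$, hold, and therefore each $t$-derivative of
\[
\Phi(t,x,\xi)= -t^{a_{2}}2^{dk}c\xi_{2}-t^{a_{1}}\xi_{1}x-t^{a_{2}}\xi_{2}x^{d}\phi(x/2^{k})
\]
is $O(\Lambda)$ uniformly in $t\in[1/2,4]$ and $x\in\mathrm{supp}\,\tilde{\rho}$, while derivatives in $t$ of the cutoff $\psi(2^{-j}|\delta_{t}\xi|)$ are $O(1)$. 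Leibniz's rule, applied inside the $\xi$ and $x$ integrals, then gives, for every integer $n\geq 0$,
\[
|\partial_{t}^{n}\widetilde{A_{t,j}^{k}}f(y)| \lesssim_{n} \Lambda^{n}\sum_{\ell=1}^{L(n)} |\widetilde{B_{t,j,\ell,n}^{k}}f(y)|,
\]
where each $\widetilde{B_{t,j,\ell,n}^{k}}$ is an operator of the same structure as $\widetilde{A_{t,j}^{k}}$ with a new smooth amplitude of comparable size.

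Next I would apply the following standard one-dimensional smoothing estimate (proved, for instance, by convolving $G$ against a Schwartz bump $\Lambda\phi(\Lambda\,\cdot)$ whose first $N-1$ moments vanish and controlling the remainder by Taylor's theorem of order $N$): for any $G\in C^{N}([1/2,4])$ with $|G^{(n)}(t)|\leq \Lambda^{n}M_{n}$ and any interval $I\subset[1,2]$ with $|I|\leq \Lambda^{-1}$,
\[
\sup_{t\in I}|G(t)| \lesssim_{N} \Lambda\int_{1/2}^{4}\frac{|G(s)|}{(1+\Lambda\,\mathrm{dist}(s,I))^{N}}\,ds + \Lambda^{-N} M_{N}.
\]
Applying this pointwise in $y$ to $G(t)=\widetilde{A_{t,j}^{k}}f(y)$ with $M_{N}=\sup_{t,\ell}|\widetilde{B_{t,j,\ell,N}^{k}}f(y)|$, then taking the $L^{q}(\mathbb{R}^{2})$ norm in $y$ and using Minkowski on the main term, reduces the required inequality to the crude bound
\[
\Bigl\|\sup_{t\in[1/2,4]}|\widetilde{B_{t,j,\ell,N}^{k}}f|\Bigr\|_{L^{q}(\mathbb{R}^{2})} \lesssim \|f\|_{L^{p}(\mathbb{R}^{2})},\qquad q\geq p\geq 1,
\]
which follows from Young's inequality since each $\widetilde{B_{t,j,\ell,N}^{k}}$ is an averaging-type operator against a smooth compactly supported kernel.

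The main technical point I anticipate is the bookkeeping of polynomial-in-$\Lambda$ losses in the crude $L^{p}\to L^{q}$ bound for $\widetilde{B_{t,j,\ell,N}^{k}}$: any such loss must be absorbed into $\Lambda^{-N}$ by choosing $N$ sufficiently large, which is legitimate because $N$ is free in the statement. It is worth noting that the hypothesis $da_{1}\neq a_{2}$ does not appear directly in the derivative computation — all three summands of $\partial_{t}\Phi$ are bounded by $\Lambda$ irrespective of this assumption — but it underlies the fact that $\Lambda^{-1}=2^{-j}(c2^{dk}+1)^{-1}$ is the sharp scale at which local-constancy holds; the complementary regime $da_{1}=a_{2}$ produces cancellations enlarging this scale, as will be exploited later in the proof of Theorem~\ref{hhisomaintheorem}.
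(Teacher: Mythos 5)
Your overall strategy---exploiting local constancy of $t\mapsto\widetilde{A_{t,j}^{k}}f(y)$ at scale $\Lambda^{-1}:=2^{-j}(c2^{dk}+1)^{-1}$ via band-limiting in $t$---is the same one the paper uses, and your observations about the sizes of the $t$-derivatives of the phase and of the cut-off are correct. However, the one-dimensional smoothing estimate you invoke is false as stated, and this is a genuine gap. Under the hypothesis $|G^{(n)}(t)|\le\Lambda^{n}M_{n}$, convolving against $\phi_{\Lambda}=\Lambda\phi(\Lambda\cdot)$ with vanishing moments up to order $N-1$ and Taylor-expanding only gives
\begin{equation*}
|G(t)-G*\phi_{\Lambda}(t)|\le\frac{1}{(N-1)!}\int|u|^{N}|\phi_{\Lambda}(u)|\,du\cdot\sup|G^{(N)}|\ \lesssim\ \Lambda^{-N}\cdot\Lambda^{N}M_{N}\ =\ M_{N},
\end{equation*}
not $\Lambda^{-N}M_{N}$: the $\Lambda^{-N}$ gained from the support of $\phi_{\Lambda}$ is exactly cancelled by the $\Lambda^{N}$ you allow in the derivative hypothesis. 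Concretely, $G(t)=\phi\bigl(\Lambda^{3/2}(t-t_{0})\bigr)$ with $t_{0}\in I$ and $\phi$ a bump with $\phi(0)=1$ has $M_{n}=\Lambda^{n/2}\|\phi^{(n)}\|_{\infty}$ and $\sup_{I}|G|=1$, while your claimed right-hand side is $O_{N}(\Lambda^{-1/2})$, so the inequality fails for $\Lambda$ large. Consequently your argument only produces an error of size $\|f\|_{L^{p}}$, not $2^{-jN}(c2^{dk}+1)^{-N}\|f\|_{L^{p}}$, and this cannot be dispensed with: in the derivation of estimate (IV) of Lemma~\ref{basicestimate}, the main term in (\ref{constantinequality}) carries a $2^{-j/2}$ gain from (\ref{L2}), and an error term with no $j$-decay would ruin the subsequent summations.

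The paper achieves the rapid decay by an argument that does not reduce to the crude pointwise bound $|\partial_{s}^{n}\bigl(\rho(s)\widetilde{A_{s,j}^{k}}f(y)\bigr)|\lesssim\Lambda^{n}$. After Fourier truncation in $t$, the high-frequency remainder is written out as a full oscillatory integral in $(s,\tau,x,\xi,z)$ and one integrates by parts in $s$ against the \emph{combined} phase $-\tau s-\Psi(s,x,\xi)$. The crucial structural point is that the amplitude factors (cut-offs, Jacobians, $\psi(2^{-j}|\delta_{s}\xi|)$) have $s$-derivatives of size $O(1)$, while the large quantity $\Lambda$ enters only through $\partial_{s}\Phi=-\tau+O(\Lambda)$; hence on the support $|\tau|\ge c_{1}\Lambda$ each integration by parts produces a genuine factor $|\tau|^{-1}$ with no compensating $\Lambda$, yielding $|\tau|^{-(N+4)}$, and then $\int_{|\tau|\gtrsim\Lambda}|\tau|^{-(N+4)}\,d\tau\lesssim\Lambda^{-N-3}$ supplies the needed smallness. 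The crude bound $|G^{(n)}|\lesssim\Lambda^{n}$ only yields $|\widehat{G}(\tau)|\lesssim(\Lambda/\tau)^{n}$, and $\int_{|\tau|\gtrsim\Lambda}(\Lambda/\tau)^{n}\,d\tau\sim\Lambda$, which is far too weak. To repair your proof you would need to carry the oscillatory-integral structure through the truncation step rather than discarding it in favor of pointwise derivative bounds, at which point you would essentially be reproducing the paper's integration-by-parts argument.
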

\begin{proof}
Let $\rho$ and $\tilde{\psi}$ be bump functions that are supported in $[1/2,4]$ and $[-c_{1},c_{1}]$ respectively, where $c_{1}$ is a positive real number larger than 1 and depends on $d$, $a_1$, $a_2$, $\phi$. We decompose
\begin{align}\label{locdecompose}
\rho(t)\widetilde{A_{t,j}^{k}}f(y) &=\int_{\mathbb{R}}e^{i \tau t} \tilde{\psi}(2^{-j}(c2^{dk}+1)^{-1}\tau) \int_{\mathbb{R}}e^{-i\tau s} \rho(s) \widetilde{A_{s,j}^{k}}f(y)ds d\tau \nonumber\\
 \quad \quad &+ \int_{\mathbb{R}}e^{i \tau t} [1- \tilde{\psi}(2^{-j}(c2^{dk}+1)^{-1}\tau) ] \int_{\mathbb{R}}e^{-i\tau s} \rho(s) \widetilde{A_{s,j}^{k}}f(y)ds d\tau \nonumber\\
&:=I + II.
\end{align}
It is obvious that
\begin{equation}\label{LocEI}
|I| \le 2^{j}(c2^{dk} + 1) \int_{\mathbb{R}} \frac{|\rho(s)\widetilde{A_{s,j}^{k}}f(y)|}{(1+2^{j}(c2^{dk}+1)|t-s|)^{N}}ds.
\end{equation}
For the second term $II$, we can write
\begin{align}
II % &=\int_{\mathbb{R}^{2}}\int_{\mathbb{R}}e^{i \tau t} [1- \psi(2^{j}(c2^{dk}+1)\tau) ] \int_{\mathbb{R}^{2}}e^{i(y-z) \cdot \xi} \beta(2^{-j}|\xi|) \int_{\mathbb{R}}e^{-i\tau s- is^{\alpha_{2}}c2^{dk}\xi_{2} -is^{\alpha_{1}}x\xi_{1}- is^{\alpha_{2}}x^{d}\phi(\frac{x}{2^{k}})\xi_{2}} \nonumber\\
% &\quad \quad  \times \rho(s)\tilde{\rho}(x) dxds d\xi d\tau f(z)dz \nonumber\\
&=\int_{\mathbb{R}}\int_{\mathbb{R}^{2}}\int_{\mathbb{R}}e^{i \tau t} [1- \tilde{\psi}(2^{-j}(c2^{dk}+1)^{-1}\tau) ]\int_{\mathbb{R}^{2}}e^{i(y-z) \cdot \xi -ix\xi_{1}} \nonumber\\
&\quad\times \int_{\mathbb{R}}e^{-i\tau s- is^{a_{2}}c2^{dk}\xi_{2} - is^{a_{2}-da_{1}}x^{d}\phi(\frac{x}{2^{k}s^{a_{1}}})\xi_{2}} \frac{1}{s^{a_{1}}}\tilde{\rho}(\frac{x}{s^{a_{1}}})\rho(s)\eta(2^{-k}\frac{x}{s^{a_{1}}}) \psi(2^{-j}|\delta_{s} \xi|) ds d\xi d\tau f(z)dz dx. \nonumber
\end{align}
Integration by parts with respect to $s$ and $\xi$ separately implies that
\begin{align}\label{locEII}
|II| &\le 2^{2j} \int_{1/2}^{4} \int_{\mathbb{R}^{2}} \int_{\mathbb{R}}  [1- \tilde{\psi}(2^{-j}(c2^{dk}+1)^{-1}\tau) ] \frac{1}{|\tau|^{N+4}}d\tau  \frac{|f(z)|}{(1+2^j|y-z-(0,c2^{dk}s^{\alpha_{2}})|)^{2}} dz ds \nonumber\\
&\lesssim 2^{-jN}(c2^{dk}+1)^{-N} 2^{2j} \int_{1/2}^{4} \int_{\mathbb{R}^{2}} \frac{|f(z)|}{(1+2^j|y-z-(0,c2^{dk}s^{\alpha_{2}})|)^{2}} dz ds.
\end{align}
Noting that for any $s \in \mathbb{R}$ and $t \in I$, we have $|t-s| \ge dist(s,I)$. Therefore, combining inequalities (\ref{locdecompose})-(\ref{locEII}), we can obtain
\begin{align}
\biggl\| \sup_{t \in I}|\widetilde{A_{t,j}^{k}}| \biggl\|_{L^{q}(\mathbb{R}^{2})} &\lesssim_{N} 2^{j}(c2^{dk} + 1) \int_{1/2}^{4} \frac{   \| \widetilde{A_{s,j}^{k}}f  \|_{L^{q}(\mathbb{R}^{2})} }{(1+2^{j}(c2^{dk}+1) dist(s,I))^{N}}ds \nonumber\\
&\quad \quad + 2^{-jN}(c2^{dk}+1)^{-N} 2^{2j} \int_{1/2}^{4} \| (1+2^{j}|\cdot-(0,2^{dk}cs^{a_{2}})|)^{-2} * f\|_{L^{q}(\mathbb{R}^{2})}ds,
 \end{align}
which means that inequality (\ref{constantinequality}) holds true according to Young's inequality.
\end{proof}

Moreover, we have the following remark for the case $da_{1}=a_{2}$, which can be obtained by modifying the proof of Lemma \ref{locallyconstant}.

\begin{remark}\label{localyconstant+}
\emph{Notice that if $c = 0$ and $da_{1}=a_{2}$, $j-mk > \epsilon^{\prime}j$ for some $\epsilon^{\prime} >0$, we get
\begin{align}
\biggl\|\sup_{t\in I}|\widetilde{A_{t,j}^{k}}f| \biggl\|_{L^{q}(\mathbb{R}^{2})} &\lesssim_{N} 2^{j-mk} \int_{1/2}^{4} \frac{\|\widetilde{A_{s,j}^{k}}f\|_{L^{q}(\mathbb{R}^{2})} }{(1+2^{j-mk}dist(s,I))^{N}}ds  + 2^{-jN}   \|f\|_{L^{p}(\mathbb{R}^{2})}.
 \end{align}
 Hence, $|\widetilde{A_{t,j}^{k}}f(y)|$ can be viewed as a constant when $t$-variable is restricted in an interval with length no more than $2^{-j+mk}$ provided that $j-mk > \epsilon^{\prime}j$ for some $\epsilon^{\prime} >0$.}
\end{remark}

Secondly, we show some known results from \cite{WL,LWZ}.  For $k \ge \log\frac{1}{\epsilon}$ and $j=0$,  the kernel estimate {\cite[inequality (2.12)]{LWZ}} yields that
\begin{equation}\label{A1}
\biggl\|\widetilde{A_{t,0}^k}f \biggl\|_{L^{q}(\mathbb{R}^{2})} \lesssim \|f\|_{L^{p}(\mathbb{R}^{2})}, \quad q \ge p \ge 1.
\end{equation}
When $k \ge \log\frac{1}{\epsilon}$ and $j \ge 1$, in references \cite{WL,LWZ}, the method of stationary phase  was used to prove the following fact, the operator $\widetilde{A_{t,j}^{k}}$ can be decomposed as
\[\widetilde{A_{t,j}^{k}}f=A_{t,j}^{k}f+R_{t,j}^{k}f,\]
where the main contribution term
\begin{equation}\label{mainterm1}
A^k_{t,j}f(y):=\int_{{\mathbb{R}}^2}e^{i(\xi\cdot y -ct^{a_{2}}2^{dk}\xi_{2}- \tilde{\Phi}(t,\xi,\delta))}\chi_{k,d,m}(t^{a_{1}-a_{2}}\frac{\xi_1}{ \xi_2})
\frac{A_{k,d,m}( \delta_{t}\xi)}{(1+| \delta_{t}\xi|)^{1/2}}
\psi (2^{-j}| \delta_{t}\xi|)\hat{f}(\xi)d\xi.
\end{equation}
Here,
\[-\tilde{\Phi}(t,\xi,\delta)=(d-1)t^{\frac{da_{1}-a_{2}}{d-1}}\xi_{2} \biggl(-\frac{\xi_{1}}{d\xi_{2}}\biggl)^{\frac{d}{d-1}}-t^{\frac{(a_{1}-a_{2})(m+d)}{d-1} +a_{2} } \xi_{2}\frac{\delta^m\phi^{(m)}(0)}{ m!}
\biggl(-\frac{\xi_1}{d\xi_2}\biggl)^{\frac{d+m}{d-1}}+ R(t,\xi,\delta,d),
\]
and $\delta = 2^{-k}$, $R(t,\xi,\delta,d)$ is homogeneous of degree one in $\xi$ and has at least $m+1$ power of $\delta$ (without loss of generality, we may assume that $\phi(0)=1$), $\chi_{k,d,m}$ is a smooth function supported in $[c_{k,m},\widetilde{c_{k,m}}]$, for certain non-zero constants $c_{k,m}$ and $\widetilde{c_{k,m}}$ dependent only on $k$ and $m$. $A_{k,m,d}$ is a symbol of order zero in $\xi$ and $\{A_{k,m,d}(\delta_t\xi)\}$ is contained in a bounded subset of symbol of order zero.  Especially, when $da_{1}=a_{2}$, we have
\begin{equation}\label{phasefunction2}
-\tilde{\Phi}(t,\xi,\delta)=(d-1) \xi_{2} \biggl(-\frac{\xi_{1}}{d\xi_{2}}\biggl)^{\frac{d}{d-1}}-t^{-a_{1}m } \xi_{2}\frac{\delta^m\phi^{(m)}(0)}{ m!}
\biggl(-\frac{\xi_1}{d\xi_2}\biggl)^{\frac{d+m}{d-1}}+ R(t,\xi,\delta,d).
\end{equation}
For convenience, we denote
\[a_{j,k,t}(\xi)=2^{\frac{j}{2}}\chi_{k,d,m}(t^{a_{1}-a_{2}}\frac{\xi_1}{ \xi_2})
\frac{A_{k,d,m}( \delta_{t}\xi)}{(1+| \delta_{t}\xi|)^{1/2}}
\psi(2^{-j}| \delta_{t}\xi|) . \]
By symmetry,  we may just keep our eyes on the first quadrant of the $\xi$-plane throughout the proof of Theorem \ref{maintheorem1} and Theorem \ref{hhisomaintheorem}.

We present some estimates for operators $A_{t,j}^{k}$ and $R_{t,j}^{k}$ obtained in references \cite{WL} and \cite{LWZ}. When $k\geq \log\frac{1}{\epsilon}$ and $j\geq 1$, the remainder term $R_{t,j}^{k}$ satisfies
\begin{equation}\label{remainderterm}
 \| R_{t,j}^{k}f  \|_{L^{q}(\mathbb{R}^{2})} \lesssim_{N}  2^{-jN} \|f\|_{L^{p}(\mathbb{R}^{2})}, \quad q \ge p \ge 1, \quad t \in [1,2]
\end{equation}
for any positive integer $N$.

%By the Sobolev's embedding, we get
%\begin{equation}\label{remainderterm1}
%\biggl\|\sup_{t \in [1,2]} |R_{t,j}^{k}f | \biggl\|_{L^{q}(\mathbb{R}^{2})} \lesssim_{N} (1+c2^{\frac{dk}{q}})2^{-jN} \|f\|_{L^{p}(\mathbb{R}^{2})}, \quad q %\ge p \ge 1.
% \end{equation}

For the operator $A_{t,j}^{k}$, we have the following estimates. By Plancherel's theorem, for each $t \in [1,2]$,
\begin{equation}\label{L2}
\|A_{t,j}^kf\|_{L^{2}(\mathbb{R}^{2})} \le 2^{-j/2} \|f\|_{L^{2}(\mathbb{R}^{2})}.
\end{equation}
By {\cite[Lemma 2.3.1]{WL}}, for each $t \in [1,2]$, we have
\begin{equation}\label{Linfty}
\|A_{t,j}^kf\|_{L^{\infty}(\mathbb{R}^{2})} \lesssim \|f\|_{L^{\infty}(\mathbb{R}^{2})},
\end{equation}
and by Young's inequality, there holds
\begin{equation}\label{L1}
\|A_{t,j}^kf\|_{L^{1}(\mathbb{R}^{2})} \lesssim \|f\|_{L^{1}(\mathbb{R}^{2})}.
\end{equation}
Moreover, Lemma 2.6 in \cite{LWZ} and Young's inequality  imply that
\begin{equation}\label{Linfty1}
\|A_{t,j}^kf\|_{L^{\infty}(\mathbb{R}^{2})} \lesssim 2^{j}\|f\|_{L^{1}(\mathbb{R}^{2})}.
\end{equation}

Finally, for the sake of  simplicity in the proof, we  assume that
\begin{equation}\label{defofbeta}
\sup_{0<\delta<1}\delta^{\beta}N(E,\delta)<+\infty
\end{equation}
holds  for fixed $0<\beta<1$ and
\begin{equation}
\sup_{0<\delta<1}\sup_{\delta\leq |I|<1}(\delta/|I|)^{\gamma}N(E\cap I,\delta)<+\infty
\end{equation}
holds for fixed $0<\gamma<1$.
This assumption could not affect our proof, as we do not consider the endpoint estimates in this article.
%But this may not be true for more general set determined by the upper Minkowski dimension and the upper Assouad dimension (see inequalities (\ref{Minkowski}) %and (\ref{Assouad})). Hence, we will miss the endpoint estimates.
Furthermore, for the sake of convenience, we set
\[(1/p_{1},1/q_{1})=(0,0), \quad \quad (1/p_{2},1/q_{2})=(\frac{1}{1+\beta},\frac{1}{1+\beta}),\]
\[(1/p_{3},1/q_{3})=(\frac{2-\beta}{3-\beta},\frac{1}{3-\beta}), \quad \quad (1/p_{4},1/q_{4})=(\frac{2}{3+2\gamma},\frac{1}{3+2\gamma}).\]

\section{The case when $da_{1} \neq a_{2}$: Proof of Theorem \ref{maintheorem1}} \label{istropic}

%Before commencing the proof of Theorem \ref{maintheorem1}, we first introduce some notations.

Theorem \ref{maintheorem1} can be implied by the following two lemmas.
 \begin{lemma}\label{mainlemma1}
For each $j \ge 0$ and $k \ge \log (1/\epsilon)$, $1 \le i \le 3$ and $\beta \in (0,1)$, there holds
\begin{equation}\label{piqi}
\biggl\| \sup_{t \in E}|\widetilde{A_{t,j}^k}f| \biggl\|_{L^{q_{i}}(\mathbb{R}^{2})} \lesssim (c2^{\frac{d\beta k}{q_{i}}} + 1) \|f\|_{L^{p_{i}}(\mathbb{R}^{2})},
\end{equation}
where the implied constant does not depend on $j$ or $k$.
Moreover, there exists a constant $\epsilon_{1}>0$ depending on $\beta$ such that
\begin{equation}\label{l2l2}
\biggl\| \sup_{t \in E}|\widetilde{A_{t,j}^k}f| \biggl\|_{L^{2}(\mathbb{R}^{2})} \lesssim (c2^{\frac{d\beta k}{2}} + 1) 2^{-\epsilon_{1}j}\|f\|_{L^{2}(\mathbb{R}^{2})}.
\end{equation}
\end{lemma}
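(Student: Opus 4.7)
The plan is to reduce the supremum over $E$ to a single-$t$ estimate by combining a Minkowski-type covering of $E$ with the locally constant property supplied by Lemma \ref{locallyconstant}. At the natural scale $\delta_k:=2^{-j}(c2^{dk}+1)^{-1}$, hypothesis (\ref{defofbeta}) produces a cover $\{I_\nu\}$ of $E$ by at most $N(E,\delta_k)\lesssim\delta_k^{-\beta}$ compact intervals of length $\delta_k$, and the trivial subadditivity of the $L^{q_i}$ norm for nonnegative functions gives
\begin{equation*}
\biggl\|\sup_{t\in E}|\widetilde{A_{t,j}^k}f|\biggl\|_{L^{q_i}}^{q_i}\le\sum_{\nu}\biggl\|\sup_{t\in I_\nu}|\widetilde{A_{t,j}^k}f|\biggl\|_{L^{q_i}}^{q_i}.
\end{equation*}
On each $I_\nu$ I would apply (\ref{constantinequality}) and majorize the weighted integral on the right-hand side by $\sup_{s\in[1/2,4]}\|\widetilde{A_{s,j}^k}f\|_{L^{q_i}}$, absorbing the tail $2^{-jN}(c2^{dk}+1)^{-N}\|f\|_{L^{p_i}}$ by choosing $N$ large. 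This reduces matters to a single-slice estimate $\|\widetilde{A_{s,j}^k}f\|_{L^{q_i}}\lesssim C_{j,i}\|f\|_{L^{p_i}}$ uniform in $s\in[1/2,4]$.

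For this single-$t$ bound I would use the decomposition $\widetilde{A_{s,j}^k}=A_{s,j}^k+R_{s,j}^k$, discard the remainder via (\ref{remainderterm}), and interpolate the main-term estimates (\ref{L2})--(\ref{Linfty1}) for $A_{s,j}^k$. Specifically, at $Q_1=(0,0)$ the bound $C_{j,1}\lesssim 1$ is immediate from (\ref{Linfty}); at $Q_2=(\frac{1}{1+\beta},\frac{1}{1+\beta})$, interpolating (\ref{L1}) with (\ref{L2}) produces $C_{j,2}\lesssim 2^{-j\beta/(1+\beta)}=2^{-j\beta/q_2}$; at $Q_3=(\frac{2-\beta}{3-\beta},\frac{1}{3-\beta})$, interpolating (\ref{L2}) with (\ref{Linfty1}) produces $C_{j,3}\lesssim 2^{-j\beta/(3-\beta)}=2^{-j\beta/q_3}$. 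The case $j=0$ is dispatched directly by (\ref{A1}) with $q=q_i$, $p=p_i$.

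Taking the $q_i$-th root and invoking the subadditivity $(c2^{dk}+1)^{\beta/q_i}\lesssim c2^{d\beta k/q_i}+1$ (valid since $\beta/q_i\le 1$), I arrive at
\begin{equation*}
\biggl\|\sup_{t\in E}|\widetilde{A_{t,j}^k}f|\biggl\|_{L^{q_i}}\lesssim\delta_k^{-\beta/q_i}C_{j,i}\|f\|_{L^{p_i}}\lesssim(c2^{d\beta k/q_i}+1)\|f\|_{L^{p_i}},
\end{equation*}
which is (\ref{piqi}); the $2^{-j\beta/q_i}$ in $C_{j,i}$ cancels the $2^{j\beta/q_i}$ factor coming from $\delta_k^{-\beta/q_i}$. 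Running the identical scheme with the single-$t$ bound $C_j\lesssim 2^{-j/2}$ from (\ref{L2}) in place of $C_{j,i}$ yields
\begin{equation*}
\biggl\|\sup_{t\in E}|\widetilde{A_{t,j}^k}f|\biggl\|_{L^2}\lesssim 2^{-j(1-\beta)/2}(c2^{d\beta k/2}+1)\|f\|_{L^2},
\end{equation*}
giving (\ref{l2l2}) with $\epsilon_1:=(1-\beta)/2>0$.

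The main delicacy will be the bookkeeping of the $c$-dependent factor $(c2^{dk}+1)^{\beta/q_i}$ produced by the Minkowski cover at the $c$-adjusted scale $\delta_k$: for $c=0$ it is trivially $1$, and for $c\ne 0$ the exponent $\beta/q_i\le 1$ is precisely what is needed to match the loss permitted on the right-hand side of (\ref{piqi}). Beyond this, no orthogonality between the pieces $I_\nu$ is required at this stage; the proof uses only the trivial $\ell^{q_i}$ inequality, the locally constant estimate of Lemma \ref{locallyconstant}, and interpolation of the one-parameter estimates for $A_{t,j}^k$ already recorded in Section 2.
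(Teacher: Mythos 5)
Your proof is correct and takes essentially the same approach as the paper's: both rest on the covering of $E$ at scale $\delta_k=2^{-j}(c2^{dk}+1)^{-1}$ supplied by (\ref{defofbeta}), the locally constant property of Lemma \ref{locallyconstant} to pass from $\sup_{t\in I_\nu}$ to a single-slice bound, and interpolation of the one-parameter estimates (\ref{L2})--(\ref{Linfty1}) on $A_{s,j}^k$. The only (cosmetic) difference is order of operations: the paper first converts the single-$t$ endpoint bounds into sup-over-$E$ bounds at $(1,1)$, $(\infty,1)$ and $(2,2)$ (estimates (II)--(IV) of Lemma \ref{basicestimate}) and then interpolates, whereas you interpolate the single-$t$ bounds down to $(p_i,q_i)$ first and apply the cover afterwards; the two orders are equivalent and yield the same exponents, including the cancellation of $2^{j\beta/q_i}$ against $C_{j,i}$ and the loss $\epsilon_1=(1-\beta)/2$ at $L^2$.
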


\begin{lemma}\label{mainlemma3}
When $\gamma \in (0,1)$,  for each $j \ge 0$ and $k \ge \log(1/\epsilon)$, there holds
\begin{equation}\label{weakestimate}
\biggl\|\sup_{t\in E}|\widetilde{A_{t,j}^{k}}f |\biggl\|_{L^{q_{4}}({\mathbb{R}}^2)} \lesssim_{\gamma} (c2^{\frac{d\gamma k}{3+2\gamma}} +1) \max\{j,1\} \|f\|_{L^{p_{4}}({\mathbb{R}}^2)},
\end{equation}
where the implied constant depends on $\gamma$, but never depends on $j$ and $k$.
\end{lemma}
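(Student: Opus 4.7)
The plan is to adapt the discretization-orthogonality scheme of \cite{AHRS, Roos} to the present FIO $A_{t, j}^k$ with the translation phase $-c t^{a_2} 2^{dk} \xi_2$. First I decompose $\widetilde{A_{t,j}^k} = A_{t,j}^k + R_{t,j}^k$; the remainder is absorbed by \eqref{remainderterm} and the case $j = 0$ is covered by \eqref{A1}, so one may assume $j \geq 1$ and focus on $A_{t,j}^k$. The natural $t$-scale here is $\delta := 2^{-j}(c 2^{dk} + 1)^{-1}$, as dictated by the translation and recorded by Lemma \ref{locallyconstant}. Letting $\mathcal E_\delta \subset E$ be a maximal $\delta$-separated subset, Lemma \ref{locallyconstant} reduces the problem, up to an admissible error of order $2^{-jN}(c 2^{dk} + 1)^{-N}\|f\|_{L^{p_4}}$, to bounding $\bigl\|\sup_{\tau \in \mathcal E_\delta}|A_{\tau,j}^k f|\bigr\|_{L^{q_4}}$. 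The Assouad condition \eqref{Assouad} furnishes the local counting $|\mathcal E_\delta \cap J| \lesssim (|J|/\delta)^\gamma$ for every interval $J \subset [1,2]$ with $|J| \geq \delta$.

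The heart of the argument is then to establish a maximal $L^2$-estimate via the $L^2$-orthogonality in Proposition \ref{keylemma1}. Applied to the family $\{A_{\tau, j}^k\}_{\tau \in \mathcal E_\delta}$, together with a dyadic chaining over intermediate scales $2^{-l}$ with $\delta \leq 2^{-l} \leq 1$ and the Assouad counting above, I expect an inequality of the shape
\[
\bigg\|\sup_{\tau \in \mathcal E_\delta}|A_{\tau, j}^k f|\bigg\|_{L^2(\mathbb{R}^2)} \lesssim (c 2^{dk}+1)^{\gamma/2}\,\max\{j,1\}\cdot 2^{-j(1-\gamma)/2}\|f\|_{L^2(\mathbb{R}^2)},
\]
where $(c 2^{dk}+1)^{\gamma/2}$ records the worst-case Assouad count at scale $\delta$ and $\max\{j,1\}$ reflects the number of dyadic scales entering the chaining. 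Interpolating this with the single-operator $L^{p_3}\to L^{q_3}$ estimate underlying Lemma \ref{mainlemma1} at the vertex $Q_3$, via a Bourgain-Marcinkiewicz type real-interpolation argument, then delivers \eqref{weakestimate} at $(1/p_4, 1/q_4)$: under the interpolation exponent $\theta = 2/q_4 = 2/(3+2\gamma)$, the Assouad power $\gamma/2$ becomes $\gamma/q_4$, producing exactly the factor $c 2^{d\gamma k/(3+2\gamma)} + 1$ appearing in the statement, while the surviving logarithmic factor amounts to $\max\{j,1\}$.

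The main technical obstacle is Proposition \ref{keylemma1}: proving $L^2$ almost-orthogonality between $A_{\tau, j}^k$ and $A_{\tau', j}^k$ whenever $|\tau - \tau'| \gtrsim \delta$. The translation phase $-c \tau^{a_2} 2^{dk} \xi_2$ does not decouple in the cross-term $\langle A_{\tau, j}^k f, A_{\tau', j}^k f\rangle$, and in fact it shifts the effective orthogonality scale from the naive $2^{-j}$ to the translation-adjusted $\delta = 2^{-j}(c 2^{dk} + 1)^{-1}$. The proof should proceed by integration by parts in $\tau$ in the composite phase $\tilde\Phi + c\tau^{a_2}2^{dk}\xi_2$, combined with a stationary-phase analysis of $\tilde\Phi$ from \eqref{mainterm1}, carried out under the hypothesis $d a_1 \neq a_2$ in the same spirit as Lemma \ref{locallyconstant}; this yields the required decay in $|\tau - \tau'|/\delta$ that underlies all the estimates above.
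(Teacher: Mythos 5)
Your proposal cannot work because the interpolation you propose does not reach the vertex $Q_4$. The $L^2\to L^2$ maximal estimate lives at $(\tfrac12,\tfrac12)$, and the $Q_3$-estimate lives at $\bigl(\tfrac{2-\beta}{3-\beta},\tfrac{1}{3-\beta}\bigr)$. The line through these two points has slope $-1$ (it is $\tfrac1q = 1-\tfrac1p$). The point $Q_4=\bigl(\tfrac{2}{3+2\gamma},\tfrac{1}{3+2\gamma}\bigr)$ satisfies $\tfrac1{q_4}+\tfrac1{p_4}=\tfrac{3}{3+2\gamma}<1$ for every $\gamma>0$, so it lies strictly below that segment and cannot be obtained by interpolating between those two estimates. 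One can also see the problem directly from your bookkeeping: you set $\theta=2/q_4$, which is exactly the weight needed to turn the power $\gamma/2$ into $\gamma/q_4$; but then $\tfrac{1}{q}=\theta\cdot\tfrac12+(1-\theta)\cdot\tfrac{1}{q_3}=\tfrac{1}{q_4}+\tfrac{1-\theta}{3-\beta}>\tfrac{1}{q_4}$, so the target exponent $q_4$ is not attained. The linear scheme (discretize, count, interpolate with a vertex on the $\gamma=0$ boundary) is therefore structurally incapable of producing the $Q_4$ point when $\gamma>0$; this is precisely why the estimate at $Q_4$ is the hard part of the region.

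What the paper actually does for $\gamma\in[1/2,1)$ is a genuinely bilinear argument. After discretizing $E$ at scale $2^{-j}(c2^{dk}+1)^{-1}$ (your reduction to $E_{j,k}$ is fine), it rewrites $\|A^{k}_{t_l,j}f\|_{L^{3+2\gamma}}$ as $2^{-j/2}\|F^k_j(f\otimes f)\|^{1/2}_{L^{(3+2\gamma)/2}}$, so that the target estimate becomes a bilinear bound (\ref{blinear3}). The two ingredients are then a bilinear $L^\infty$ bound with an inner sup-structure in the transverse variable (Lemma \ref{bilinear3lemma1}) and, crucially, a bilinear $L^2$ bound carrying a fractional-integral kernel $|w-z|^{-(2\gamma-1)}$ (Lemma \ref{bilinear3lemma2}). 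Interpolating these and closing with Hölder and Hardy--Littlewood--Sobolev is what produces the $(p_4,q_4)$ estimate: the HLS gain is exactly what lets one get below the slope-$(-1)$ line, and there is no linear substitute. Proposition \ref{keylemma1} in the paper is not an $L^2$-orthogonality statement for the linear family $\{A^k_{\tau,j}\}$, as you describe; it is an $L^2$ estimate for the angularly localized bilinear pieces $T^k_j(\mathcal F_{-h},b_h)$, decomposed over angular separation $2^{-h}$, and it is used inside the proof of Lemma \ref{bilinear3lemma2} together with Proposition \ref{keylemma2}. Also note: in the discretized bilinear form the translation phase $-ct^{a_2}2^{dk}\xi_2$ plays no role in the orthogonality (it is a pure translation of $y$ at each fixed $t_l$, hence harmless in the $L^q(\mathbb{R}^2\times E_{j,k})$ norm); its only job is to force the fine $t$-scale $2^{-j}(c2^{dk}+1)^{-1}$, which both you and the paper exploit via Lemma \ref{locallyconstant}. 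Finally, for $\gamma\in(0,1/2)$ the paper does not use the bilinear argument at all but a duality/Bourgain-interpolation argument (Theorem \ref{smallgamma+} in the appendix) interpolated with the $L^1\to L^\infty$ estimate (\ref{Linfty1}), which you do not address.
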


Now we will explain how to utilize Lemma \ref{mainlemma1}  and Lemma \ref{mainlemma3} to obtain Theorem \ref{maintheorem1} through interpolation. In the case $c=0$, interpolating between the estimate in (\ref{l2l2})  with (\ref{piqi}) when $i=1,2,3$ respectively,  and combining with the interpolation between the estimate  in (\ref{l2l2}) and (\ref{weakestimate}), we obtain the existence of a constant $\epsilon(p,q)>0$ such that when $(\frac{1}{p},\frac{1}{q})$ belongs to the region $\mathcal{R}^{2}(\beta,\gamma) \backslash \{(0,0)\}$, there holds
\[\biggl\| \sup_{t \in E}|\widetilde{A_{t,j}^k}| \biggl\|_{L^{p}(\mathbb{R}^{2}) \rightarrow L^{q}(\mathbb{R}^{2})} \lesssim  2^{-\epsilon(p,q)j}.\]
Therefore, combining with the isometric transformation, when $(\frac{1}{p},\frac{1}{q})$ satisfies $\frac{d+1}{p}-\frac{d+1}{q}-1<0$, we have
\begin{align}
\biggl\|\sup_{t \in E}|A_{t}| \biggl\|_{L^{p}(\mathbb{R}^{2}) \rightarrow L^{q}(\mathbb{R}^{2})} &\lesssim \sum_{k \ge \log(1/\epsilon)} 2^{\frac{(d+1)k}{p}-\frac{(d+1)k}{q} -k} \sum_{j \ge 0} \biggl\|\sup_{t \in E}|\widetilde{A_{t,j}^{k}}| \biggl\|_{L^{p}(\mathbb{R}^{2}) \rightarrow L^{q}(\mathbb{R}^{2})} \nonumber\\
 &\lesssim \sum_{ k \ge \log(1/\epsilon)} 2^{\frac{(d+1)k}{p}-\frac{(d+1)k}{q}-k} \sum_{j \ge 0} 2^{-\epsilon(p,q)j} \lesssim 1,\nonumber
\end{align}
which implies Theorem \ref{maintheorem1} (1). Next we will prove the conclusion of Theorem \ref{maintheorem1} (2).
The interpolation between (\ref{l2l2}) and (\ref{piqi}) (for $i=1,2,3$) yields that, for any $j \ge 0$ and $(\frac{1}{p},\frac{1}{q}) \in \mathcal{R}^{2}(\beta,\gamma)\backslash \{(0,0)\}$ that satisfies $\frac{2-\beta}{q}> \frac{1}{p}$, there exists a $\epsilon(p,q)>0$ such that
\begin{equation}\label{inter1}
\biggl\|\sup_{t \in E}|\widetilde{A_{t,j}^{k}}| \biggl\|_{L^{q}(\mathbb{R}^{2})} \lesssim 2^{\frac{d\beta k}{q}} 2^{-\epsilon(p,q)j}\|f\|_{L^{p}(\mathbb{R}^{2})}.
\end{equation}
Then we have
\begin{equation}\label{cneq0e1}
\biggl\|\sup_{t\in E}|A_{t}|\biggl\|_{L^{p}(\mathbb{R}^{2})\rightarrow L^{q}(\mathbb{R}^{2})} \lesssim  \sum_{k \ge \log(1/\epsilon)}2^{\frac{(d+1)k}{p}-\frac{(d+1)k}{q}-k}  2^{\frac{d \beta k}{q}} \sum_{j \ge 0} 2^{-\epsilon(p,q)j} \lesssim 1
\end{equation}
provided that $(\frac{1}{p},\frac{1}{q}) \in \mathcal{R}^{2}(\beta,\gamma)\backslash \{(0,0)\}$, $\frac{2-\beta}{q} > \frac{1}{p}$ and $\frac{d+1}{p} -\frac{d-\beta d +1}{q}-1<0$.
Furthermore, the interpolation between (\ref{smallgamma}) and (\ref{piqi}) (when $i=1$) implies that, for each $j \ge 0$ and $q \ge 3+2\gamma$, there holds
\begin{equation}\label{inter2}
 \biggl\|\sup_{t \in E}|\widetilde{A_{t,j}^{k}}| \biggl\|_{L^{q}(\mathbb{R}^{2})} \lesssim 2^{\frac{d\gamma k}{q}} \max\{j,1\}^{\frac{3+ 2\gamma}{q}} \|f\|_{L^{\frac{q}{2}}(\mathbb{R}^{2})}.
 \end{equation}
 Interpolating between (\ref{inter1}) and (\ref{inter2}), and combining with the fact that $\beta \le \gamma$, there exists a constant $\epsilon^{\prime}(p,q)>0$ such that
\[ \biggl\|\sup_{t \in E}|\widetilde{A_{t,j}^{k}}| \biggl\|_{L^{q}(\mathbb{R}^{2})} \lesssim 2^{\frac{d\gamma k}{q}} 2^{-\epsilon^{\prime}(p,q)j} \|f\|_{L^{p}(\mathbb{R}^{2})} \]
 for any $j \ge 0$ and $(\frac{1}{p},\frac{1}{q}) \in \mathcal{R}^{2}(\beta,\gamma)\backslash \{(0,0)\}$ that satisfies $\frac{2-\beta}{q} \le \frac{1}{p} < \frac{2}{q}$. It follows that
\begin{equation}\label{cneq0e2}
\biggl\|\sup_{t\in E}|A_{t}|\biggl\|_{L^{p}(\mathbb{R}^{2})\rightarrow L^{q}(\mathbb{R}^{2})}  \lesssim \sum_{k \ge \log(1/\epsilon)}2^{\frac{(d+1)k}{p}-\frac{(d+1)k}{q}-k} 2^{\frac{d\gamma k}{q}} \sum_{j \ge 0} 2^{-\epsilon^{\prime}(p,q)j} \lesssim 1,
\end{equation}
whenever $(\frac{1}{p},\frac{1}{q}) \in \mathcal{R}^{2}(\beta,\gamma)\backslash \{(0,0)\}$, $\frac{2-\beta}{q} \le \frac{1}{p} < \frac{2}{q}$ and $\frac{d+1}{p} - \frac{d-\gamma d+1}{q} - 1 < 0$.  By interpolation between (\ref{cneq0e1}) and (\ref{cneq0e2}) we can finish the proof of Theorem \ref{maintheorem1} (2).

Now we start to prove Lemmas \ref{mainlemma1}-\ref{mainlemma3} in Subsection \ref{proofofmainlemma1}-\ref{proofofmainlemma3}, respectively.

\subsection{Proof of Lemma \ref{mainlemma1}}\label{proofofmainlemma1}
 For $i=1$, inequality (\ref{piqi}) can be obtained by (\ref{Linfty}) directly. Inequality (\ref{piqi}) for $2 \le i \le 3$ and inequality (\ref{l2l2}) follow from the next lemma.
\begin{lemma}\label{basicestimate}
We have the following four estimates. \\
\textbf{(I)} For $q \ge p \ge 1$, we have
\begin{align}
\biggl\|\sup_{t\in E} |\widetilde{A_{t,0}^{k}}f| \biggl\|_{L^{q}({\mathbb{R}}^2)} \lesssim (c2^{\frac{d\beta k}{q}} +1) \|f\|_{L^{p}({\mathbb{R}}^2)}.
\end{align}
\textbf{(II)} For each $j \ge 1$, there holds
\begin{equation}
\biggl\|\sup_{t\in E}|A_{t,j}^{k}f| \biggl\|_{L^{1}({\mathbb{R}}^2)}\lesssim (c2^{d \beta k} +1)2^{\beta j}\|f\|_{L^1({\mathbb{R}}^2)}.
\end{equation}
\textbf{(III)} For each $j \ge 1$, we get
\begin{equation}
\biggl\|\sup_{t\in E}|A_{t,j}^{k}f| \biggl\|_{L^{\infty}({\mathbb{R}}^2)}\lesssim 2^{j}\|f\|_{L^1({\mathbb{R}}^2)}.
\end{equation}
\textbf{(IV)} For each $j \ge 1$, we obtain
\begin{equation}
\biggl\|\sup_{t\in E}|A_{t,j}^{k}f| \biggl\|_{L^{2}({\mathbb{R}}^2)}\lesssim (c2^{\frac{d \beta k}{2}} +1)2^{\frac{\beta j}{2}} 2^{-\frac{j}{2}}\|f\|_{L^2({\mathbb{R}}^2)}.
\end{equation}
\end{lemma}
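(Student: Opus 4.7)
Plan. All four parts of Lemma \ref{basicestimate} reduce to a single template: use the locally constant property of Lemma \ref{locallyconstant} to localize the supremum over $t\in E$ onto a family of short intervals, cover $E$ by such intervals using the Minkowski dimension bound (\ref{defofbeta}), and then insert the appropriate fixed-$t$ estimate from (\ref{A1}), (\ref{L1}), (\ref{L2}), or (\ref{Linfty1}). Part \textbf{(III)} is the exception: the pointwise bound (\ref{Linfty1}) already yields $|A_{t,j}^k f(y)|\lesssim 2^j\|f\|_{L^1}$ uniformly in $t$, so the supremum is absorbed trivially and no covering is needed.

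For parts \textbf{(I)}, \textbf{(II)} and \textbf{(IV)}, I would cover $E$ by a minimal family $\{I_\nu\}$ of intervals of length $\delta_{j,k}:=2^{-j}(c2^{dk}+1)^{-1}$, so that $\#\{I_\nu\}\lesssim \delta_{j,k}^{-\beta}\lesssim 2^{\beta j}(c2^{dk}+1)^\beta$ by (\ref{defofbeta}). Lemma \ref{locallyconstant} then yields, for every $N\gg 1$,
\begin{equation*}
\bigl\|\sup_{t\in I_\nu}|\widetilde{A_{t,j}^k}f|\bigr\|_{L^q}
\lesssim \sup_{s\in[1/2,4]}\bigl\|\widetilde{A_{s,j}^k}f\bigr\|_{L^q}
+ 2^{-jN}(c2^{dk}+1)^{-N}\|f\|_{L^p},
\end{equation*}
with the second term negligible. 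For $j\ge 1$ one replaces $\widetilde{A_{s,j}^k}$ by the main term $A_{s,j}^k$, since the remainder satisfies (\ref{remainderterm}) and $\sup_t|R_{t,j}^k f|$ contributes only an $O(2^{-jN'})$ error. The elementary inequality $\|\sup_{t\in E}|\cdot|\|_{L^q}^q\le \sum_\nu \|\sup_{t\in I_\nu}|\cdot|\|_{L^q}^q$ then produces a counting factor $(\#\{I_\nu\})^{1/q}\lesssim 2^{\beta j/q}(c2^{dk}+1)^{\beta/q}$, which, since $c$ is either $0$ or a fixed positive constant, is comparable to $2^{\beta j/q}(c2^{d\beta k/q}+1)$.

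It then remains to plug in the fixed-$t$ bounds: for \textbf{(I)} I take $j=0$ and use (\ref{A1}) at exponent $q\ge p\ge 1$, giving $\sup_s\|\widetilde{A_{s,0}^k}f\|_{L^q}\lesssim \|f\|_{L^p}$ together with the counting factor $(c2^{d\beta k/q}+1)$; for \textbf{(II)} I use (\ref{L1}) at $q=1$, so the counting factor is the full $(c2^{d\beta k}+1)2^{\beta j}$; for \textbf{(IV)} I use the Plancherel bound (\ref{L2}) at $q=2$, producing the decay $2^{-j/2}$ multiplied by the $q=2$ counting factor $(c2^{d\beta k/2}+1)2^{\beta j/2}$. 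The only technical subtlety I foresee is that one must check that the two rapidly decaying tails—the error term in Lemma \ref{locallyconstant} and the remainder estimate (\ref{remainderterm})—can both be absorbed uniformly in $\nu$, but since $N$ is arbitrary each contributes at worst $O(2^{-jN'})$ for any $N'$, which is dwarfed by the positive powers of $2^j$ appearing in the main term.
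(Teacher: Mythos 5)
Your proposal is correct and follows essentially the same route as the paper: cover $E$ by $O\bigl((c2^{dk}+1)^\beta 2^{\beta j}\bigr)$ intervals of length $2^{-j}(c2^{dk}+1)^{-1}$ via the Minkowski bound (\ref{defofbeta}), apply Lemma \ref{locallyconstant} on each interval to reduce $\sup_{t\in I_\nu}$ to a fixed-$t$ norm, insert (\ref{A1}), (\ref{L1}), or (\ref{L2}) as appropriate, and take the $\ell^q$-sum to produce the counting factor, with \textbf{(III)} following directly from the pointwise bound (\ref{Linfty1}). Both you and the paper gloss over the same minor point — Lemma \ref{locallyconstant} is stated for $\widetilde{A_{t,j}^k}$ while \textbf{(II)}--\textbf{(IV)} concern $A_{t,j}^k$ — which is handled either by the identical proof applied to $A_{t,j}^k$, or by controlling $\sup_t|R_{t,j}^k f|$ via the rapid decay of the remainder kernel; this is routine and not a gap.
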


Indeed, by the interpolation between (II) and (IV), and combining the estimate (I) we get  inequality (\ref{piqi}) for the point $(1/p_{2}, 1/q_{2}) $ $= (1/(\beta +1),1/(\beta +1))$. Similarly, the inequality (\ref{piqi}) for  $(1/p_{3}, 1/q_{3}) = ((2-\beta)/(3-\beta),1/(3-\beta))$ follows from  (I) and the interpolation among the estimates (III), (IV). The inequality (\ref{l2l2}) can be directly obtained from the estimate (IV), since $\beta \in (0,1)$.

 Now let us prove Lemma \ref{basicestimate}.  Firstly, the estimate (I) follows from inequality (\ref{A1}) and Lemma \ref{locallyconstant}.
We will outline the application of Lemma \ref{locallyconstant} here. Lemma \ref{locallyconstant} will be repeatedly used in the subsequent proofs, and due to its similar application here, we will not elaborate on it further. In fact, combining inequality (\ref{A1}), inequality (\ref{proofofmainlemma3}) and inequality (\ref{constantinequality}) (with $j=0$), we obtain
\begin{align}
\biggl\| \sup_{t \in I}|\widetilde{A_{t,0}^{k}}| \biggl\|_{L^{q}(\mathbb{R}^{2})} &\lesssim_{N}  (c2^{dk} + 1) \int_{1/2}^{4} \frac{  \|\widetilde{A_{s,0}^{k}}f  \|_{L^{q}(\mathbb{R}^{2})} }{(1+ (c2^{dk}+1) dist(s,I))^{N}}ds \nonumber\\
&\quad \quad +  (c2^{dk}+1)^{-N} \|f\|_{L^{p}(\mathbb{R}^{2})} \nonumber\\
&\lesssim_{N} (c2^{dk} + 1) \int_{1/2}^{4}  (1+ (c2^{dk}+1) dist(s,I))^{-N}ds  \|f  \|_{L^{p}(\mathbb{R}^{2})} \nonumber\\
&\quad \quad +  (c2^{dk}+1)^{-N} \|f\|_{L^{p}(\mathbb{R}^{2})} \nonumber\\
&\lesssim_{N}  \|f\|_{L^{p}(\mathbb{R}^{2})}, \nonumber
 \end{align}
where $I$ is any interval contained in $[1,2]$ and of length $(c2^{dk}+1)^{-1}$. Therefore, combining inequality (\ref{defofbeta}), we can get the estimate (I).

Next we will prove the estimates (II)-(IV). Combining inequality (\ref{L2}) with Lemma \ref{locallyconstant}, we obtain the estimate (IV) in Lemma \ref{basicestimate}.
The  estimate (II) follows from Lemma \ref{locallyconstant} and inequality  (\ref{L1}). Moreover, inequality (\ref{Linfty1}) directly leads to (III) in Lemma \ref{basicestimate}. $\hfill\square$

%In order to prove the estimate (III) in Lemma \ref{basicestimate}, we need the following lemma about kernel estimate of $A_{t,j}^kf$.
%\begin{lemma}\label{kernel1}
%For each  $\lambda \gg 1$, $y \in \mathbb{R}^{2}$ and $|t| \sim 1$,  there holds a uniform estimate
%\begin{equation}\label{kerneldecay}
%\biggl|\int_{{\mathbb{R}}^2}e^{i\lambda (\eta \cdot y -t^{a_{2}}c2^{dk}\eta_{2}-\tilde{\Phi}(t,\eta,\delta))}a_{j,k,t}(2^{j} \eta) d\eta \biggl| \lesssim \lambda^{-1/2}.
%\end{equation}
%\end{lemma}
%We omit the proof since it can be obtained by modifying the proof of Lemma 2.6 in \cite{LWZ}. According to Lemma \ref{kernel1}, Young's inequality  implies
%\begin{equation}\label{Linfty1}
%\|A_{t,j}^kf\|_{L^{\infty}(\mathbb{R}^{2})} \lesssim 2^{j}\|f\|_{L^{1}(\mathbb{R}^{2})}.
%\end{equation}

\subsection{Proof of Lemma \ref{mainlemma3}}\label{proofofmainlemma3}
Before presenting the proof, we will first make some reductions. For each $j$, $k$, let $\mathcal{I}_{j,k}(E)$ denote the collection of the intervals of the form $[l(c2^{dk} + 1)^{-1}2^{-j}, (l+1)(c2^{dk} + 1)^{-1}2^{-j}]$, $l \in \mathbb{Z}$, that intersect $E$. Let $E_{j,k} = \{t_{l}\}_{l}$ be the left endpoints of the intervals belonging to $\mathcal{I}_{j,k}(E)$.
Then
\begin{align}
\sup_{t\in E} |A_{t,j}^kf(y)| % &= \sup_{t_{l}\in E_{j,k}} \sup_{u \in (0, (c2^{dk} +1)^{-1}2^{-j})} |A_{t_{l} + u,j}^kf(y)| \nonumber\\
&\le \sup_{t_{l}\in E_{j,k}} |A_{t_{l},j}^kf(y)| + \int_{0}^{(c2^{dk}+1)^{-1}2^{-j}} \sup_{t_{l}\in E_{j,k}} |\partial_{u} A_{t_{l}+u,j}^kf(y)| du.
\end{align}
Equipping $E_{j,k}$ with the counting measure, we are reduced to proving the following lemma.
\begin{lemma}\label{smallgamma}
For each $j \ge 0$, $k \ge \log(1/\epsilon)$ and $\gamma \in (0,1)$, we have
\[\|A_{t_{l},j}^{k}f \|_{L^{3+2\gamma}({\mathbb{R}}^2 \times E_{j,k})} \lesssim_{\gamma} (c2^{\frac{d\gamma k}{3+2\gamma}} +1)  \max\{j,1\} \|f\|_{L^{\frac{3+2\gamma}{2}}({\mathbb{R}}^2)}. \]
\end{lemma}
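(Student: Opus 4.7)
The plan is to adapt the $TT^{*}$-plus-Assouad strategy of \cite{AHRS,Roos} to the finite-type setting, with Proposition \ref{keylemma1} replacing the spherical pairwise-orthogonality estimate of Roos--Seeger. Set $q := 3+2\gamma$ and $\delta := (c2^{dk}+1)^{-1}2^{-j}$, so that $E_{j,k} \subset [1,2]$ is $\delta$-separated; by (\ref{Assouad}), $\#(E_{j,k}\cap I)\lesssim_{\epsilon}(|I|/\delta)^{\gamma+\epsilon}$ for every interval $I$ with $\delta\le|I|\le 1$. Since $\|\cdot\|_{\ell^{q}}\le\|\cdot\|_{\ell^{2}}$ on counting measure when $q\ge 2$,
\[
\|A_{t_{l},j}^{k}f\|_{L^{q}(\mathbb{R}^{2}\times E_{j,k})}^{q}
=\sum_{l}\|A_{t_{l},j}^{k}f\|_{L^{q}}^{q}
\le \Bigl\|\bigl(\sum_{l}|A_{t_{l},j}^{k}f|^{2}\bigr)^{1/2}\Bigr\|_{L^{q}(\mathbb{R}^{2})}^{q},
\]
so the target reduces to the square-function bound $\|(\sum_{l}|A_{t_{l},j}^{k}f|^{2})^{1/2}\|_{L^{q}}\lesssim (c2^{d\gamma k/q}+1)\max\{j,1\}\|f\|_{L^{q/2}}$. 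Squaring and polarizing, this is equivalent to the bilinear estimate
\[
\|\Pi(f,g)\|_{L^{q/2}}\lesssim_{\gamma}\bigl((c2^{d\gamma k/q}+1)\max\{j,1\}\bigr)^{2}\|f\|_{L^{q/2}}\|g\|_{L^{q/2}},\quad \Pi(f,g):=\sum_{l}A_{t_{l},j}^{k}f\cdot\overline{A_{t_{l},j}^{k}g}.
\]

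\textbf{$TT^{*}$ expansion, orthogonality, Assouad counting.} I would then implement the $TT^{*}$ expansion of $\Pi$ (equivalently of $\sum_{l}T_{l}^{*}T_{l'}$ after interchange of sums), and dyadically split the double sum by the separation $|t_{l}-t_{l'}|\sim 2^{-r}$, for $r=0,1,\dots,R$ with $R:=\lceil\log_{2}(1/\delta)\rceil\sim\max\{j,1\}+\log(c2^{dk}+1)$. On each scale $r$, I combine the pairwise $L^{2}$-orthogonality from Proposition \ref{keylemma1} (of the form $\|A_{t,j}^{k}(A_{t',j}^{k})^{*}\|_{L^{2}\to L^{2}}\lesssim 2^{-j}(2^{j}|t-t'|)^{-\alpha}$ for some $\alpha>0$ in the non-stationary range $|t-t'|\gg 2^{-j}$) with the Assouad count $\#\{l':|t_{l}-t_{l'}|\sim 2^{-r}\}\lesssim(\delta\cdot 2^{r})^{-\gamma-\epsilon}$. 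Interpolating the resulting $L^{2}$-type estimate against the trivial $L^{1}\to L^{\infty}$ bound $\|A_{t,j}^{k}f\|_{L^{\infty}}\lesssim 2^{j}\|f\|_{L^{1}}$ from (\ref{Linfty1}), along the slope-$1/q$ line $L^{q/2}\to L^{q}$ in the Riesz diagram, gives a per-scale estimate that is independent of $r$ up to absolute constants; the critical exponent $q=3+2\gamma$ is precisely the one at which the Assouad gain $(\delta 2^{r})^{-\gamma}$ cancels the orthogonality loss $(2^{j-r})^{-\alpha}$ at the chosen interpolation point. Summing over the $O(\max\{j,1\})$ scales then yields the claimed logarithmic factor, and the $c$-dependent factor $(c2^{d\gamma k/q}+1)$ emerges from converting the $c2^{dk}$ in $\delta$ via Assouad together with the locally-constant Lemma \ref{locallyconstant}, which dictates precisely why $\delta$ is the right scale on which to discretize $E$.

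\textbf{Main obstacle.} The hard step is Proposition \ref{keylemma1}. Unlike the spherical case in \cite{Roos}, the phase $\tilde{\Phi}(t,\xi,\delta)$ in (\ref{mainterm1}) is not homogeneous in $t$: the finite-type perturbation $\phi$ contributes a subleading power of $t$ of homogeneity $(a_{1}-a_{2})(m+d)/(d-1)+a_{2}$, and the translation $c\neq 0$ contributes an extra linear-in-$t^{a_{2}}$ term $ct^{a_{2}}2^{dk}\xi_{2}$. Verifying that the $L^{2}$-norm of $A_{t,j}^{k}(A_{t',j}^{k})^{*}$ decays appropriately in $|t-t'|$ therefore requires a careful stationary-phase/integration-by-parts analysis that correctly tracks the mixed scales $2^{-k}$ (depth along the curve) and $2^{-j}$ (frequency annulus), and exploits the non-degeneracy assumption $da_{1}\neq a_{2}$ to prevent the $t$-derivative of the phase from collapsing. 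The separate case $c\neq 0$ is handled by performing the entire orthogonality argument relative to the finer $\delta$-scale of Lemma \ref{locallyconstant}, which is exactly why $(c2^{d\gamma k/q}+1)$ rather than $(c2^{dk/q}+1)^{\gamma}$ appears in the final constant.
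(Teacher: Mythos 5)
Your outline captures some of the correct ingredients (Proposition \ref{keylemma1}, Assouad counting, a bilinearization of the $L^{3+2\gamma}$ norm), but there is a substantive gap in the interpolation strategy that makes the argument fail for $\gamma\geq 1/2$, which is the hard range — and indeed the range that forces the paper to import the Roos--Seeger bilinear machinery in the first place.

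The paper treats $\gamma\in(0,1/2)$ and $\gamma\in[1/2,1)$ separately. For $\gamma<1/2$, a $TT^{*}$-plus-Assouad argument essentially in the spirit you describe does work; it is carried out in the appendix (Theorem \ref{smallgamma+}), where a Bourgain-type restricted weak-type interpolation between an $L^{2}$ estimate with orthogonality and an $L^{1}\to L^{\infty}$ kernel bound yields
$\|A_{t_{l},j}^{k}f\|_{L^{2(1+2\gamma),\infty}(\mathbb{R}^{2}\times E_{j,k})}\lesssim(c2^{d\gamma k/(2(1+2\gamma))}+1)2^{-\frac{1-2\gamma}{2(1+2\gamma)}j}\|f\|_{L^{2}}$, and interpolation with (\ref{Linfty1}) finishes. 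But note the sign of the exponent on $2^{j}$: for $\gamma>1/2$ the exponent $-\frac{1-2\gamma}{2(1+2\gamma)}$ is \emph{positive}, so the $L^{2}$-based endpoint is not even decaying, and the interpolation parameter becomes negative (you would be extrapolating, not interpolating). This is exactly the obstruction that makes the two-dimensional case with $\gamma>1/2$ the hard one — as the introduction notes, AHRS only treat $\gamma\leq1/2$ in $n=2$ and Roos--Seeger had to devise a separate argument for $\gamma>1/2$. Your ``per-scale interpolation along the slope-$1/q$ line'' between the orthogonality $L^{2}$ bound and the trivial $L^{1}\to L^{\infty}$ bound is structurally the same two-point interpolation, and it will not produce the sharp $L^{(3+2\gamma)/2}\to L^{3+2\gamma}$ estimate for $\gamma>1/2$.

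What the paper actually does for $\gamma\in[1/2,1)$ is different in an essential way. After reducing exactly (not by $\ell^{q}\hookrightarrow\ell^{2}$, which is lossy, but by the identity $|A_{t_{l},j}^{k}f|^{2}=2^{-j}|F_{j}^{k}(f\otimes f)|$) to a bound on the bilinear operator $F_{j}^{k}$, they prove two \emph{mixed-norm} endpoint bounds: the $L^{\infty}$-in-$t_{l}$ bound of Lemma \ref{bilinear3lemma1}, whose right-hand side is $\int\sup_{w_{2},z_{2}}|f(w_{1},w_{2})||f(z_{1},z_{2})|\,dw_{1}dz_{1}$, and the $L^{2}$-in-$t_{l}$ bound of Lemma \ref{bilinear3lemma2}, whose right-hand side carries the diagonal weight $|w-z|^{-(2\gamma-1)}$. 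Analytic interpolation between these followed by H\"older and Hardy--Littlewood--Sobolev in the $w_{1},z_{1}$ variables is what produces $L^{(3+2\gamma)/2}\to L^{(3+2\gamma)/2}$ for the bilinear form; a plain Riesz--Thorin interpolation against $L^{1}\to L^{\infty}$ cannot see this weighted diagonal structure. Moreover, Lemma \ref{bilinear3lemma2} itself needs an additional two-parameter decomposition — in the angular gap $h$ (via $b_{h}$) and the physical-space separation $n$ (via $\mathcal{F}_{-h+n}$) — and requires both Proposition \ref{keylemma1} (diagonal, $n=0$) \emph{and} Proposition \ref{keylemma2} (off-diagonal, $n\geq1$); your sketch only invokes Proposition \ref{keylemma1} and a single decomposition in $|t_{l}-t_{l'}|$, which leaves the off-diagonal contribution uncontrolled.

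Finally, the bookkeeping of the constants is off. You take $R\sim\max\{j,1\}+\log(c2^{dk}+1)$ scales and claim the sum produces a $\max\{j,1\}$ factor; but when $c\neq0$ and $k$ is large, $R$ dominates $j$, so summing a scale-uniform estimate over $R$ scales gives a factor growing like $j+dk$, not $\max\{j,1\}$. In the paper, the $\max\{j,1\}$ loss comes from summing over the angular scales $h=1,\dots,j/2$ only, while the $c2^{dk}$-dependence is absorbed by the Assouad counting bound $M\lesssim(c2^{\gamma dk}+1)2^{2\gamma h}$ for the number of translates $E_{j,k,s}$ (see the proof of Proposition \ref{keylemma1}) and by the factor $Card(E_{j,k})\lesssim(c2^{d\gamma k}+1)2^{\gamma j}$ in Proposition \ref{keylemma2}, without any extra logarithmic loss in $k$.
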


The result for $j=0$, $\gamma \in (0,1)$ follows from the estimate (I) in Lemma \ref{basicestimate}. When $j \ge 1$ and $\gamma \in (0,1/2)$, the result in Lemma \ref{smallgamma} can be obtained by interpolating between the inequality
\begin{equation}\label{smallgamma2+}
\|A_{t_{l},j}^{k}f \|_{L^{\infty}({\mathbb{R}}^2 \times E_{j,k})} \lesssim   2^{j} \|f\|_{L^1({\mathbb{R}}^2)}
\end{equation}
and the following estimate from Theorem \ref{smallgamma+} in the appendix,
\begin{equation}\label{smallgamma1+}
\|A_{t_{l},j}^{k}f \|_{L^{2(1+2\gamma), \infty}({\mathbb{R}}^2 \times E_{j,k})} \lesssim (c2^{\frac{d\gamma k}{2(1+2\gamma)}} +1)  2^{-\frac{1-2\gamma}{2(1+2\gamma)}j} \|f\|_{L^2({\mathbb{R}}^2)}.
\end{equation}
More precisely, the interpolation between inequalities (\ref{smallgamma1+}) and (\ref{smallgamma2+}) leads to
\[\|A_{t_{l},j}^{k}f \|_{L^{3+2\gamma}({\mathbb{R}}^2 \times E_{j,k})} \lesssim_{\gamma} (c2^{\frac{d\gamma k}{3+2\gamma}} +1)  \|f\|_{L^{\frac{3+2\gamma}{2}}({\mathbb{R}}^2)}. \]
Moreover, inequality (\ref{smallgamma2+}) can be directly derived from inequality (\ref{Linfty1}). The proof of Theorem \ref{smallgamma+} can be established by using the duality method described in {\cite[Proposition 3.1]{AHRS}}. To avoid disrupting the main thread, we will briefly outline the proof of Theorem  \ref{smallgamma+} in the appendix.

Hence we only consider $j \ge 1$ and $\gamma \in [1/2,1)$ in what follows. Denote
\[F_{j}^{k}(f\otimes f)(y,t_{l})=  \int_{\mathbb{R}^{2}} \int_{\mathbb{R}^{2}}e^{i(y \cdot (\xi +  \zeta)  -\tilde{\Phi}(t_{l},\xi,\delta) - \tilde{\Phi}(t_{l},\zeta,\delta))}a_{j,k}(t_{l},\xi, \zeta) \hat{f}(\xi)\hat{f}(\zeta)d\xi d\zeta,\]
where $a_{j,k}(t_{l}, \xi, \zeta) =a_{j,k,t_{l}}(\xi)a_{j,k,t_{l}}(\zeta) $. It is clear that
\begin{align}
\|A_{t_{l},j}^{k}f \|_{L^{3+2\gamma}({\mathbb{R}}^2 \times E_{j,k})} %&= \biggl( \sum_{t_{l} \in E_{j,k}} \|A_{t_{l},j}^{k}f \|_{L^{3+2\gamma}({\mathbb{R}}^2)}^{3+2\gamma} \biggl)^{\frac{1}{3+2\gamma}} \nonumber\\
%&= \biggl( \sum_{t_{l} \in E_{j,k}} \biggl \| |A_{t_{l},j}^{k}f(y_{1},y_{2} + c2^{dk}t_{l})|^{2} \biggl \|_{L^{\frac{3+2\gamma}{2}}({\mathbb{R}}^2)}^{\frac{3+2\gamma}{2}} \biggl)^{\frac{1}{3+2\gamma}} \nonumber\\
&= 2^{-\frac{j}{2}}\biggl( \sum_{t_{l} \in E_{j,k}} \biggl \| F_{j}^{k}(f\otimes f)(y,t_{l}) \biggl \|_{L^{\frac{3+2\gamma}{2}}({\mathbb{R}}^2)}^{\frac{3+2\gamma}{2}} \biggl)^{\frac{1}{3+2\gamma}}. \nonumber
\end{align}
Hence, we are reduced to proving the following estimate
\begin{equation}\label{blinear3}
\|F_{j}^{k}(f\otimes f)(y,t_{l})\|_{L^{ \frac{3+2\gamma}{2}}({\mathbb{R}}^2 \times E_{j,k})} \lesssim_{\gamma} (c2^{\frac{2d\gamma k}{3+2\gamma}} +1) j  2^{j} \|f\|^2_{L^{\frac{3+2\gamma}{2}}({\mathbb{R}}^2)}.
\end{equation}
By the similar analytic interpolation argument as in {\cite[Page 12-13]{Roos}}, in order to obtain inequality (\ref{blinear3}), we will prove the following two lemmas.

\begin{lemma}\label{bilinear3lemma1}
For each $j \ge 1$, we have
\begin{equation}\label{blinear3L1infty}
\|F_{j}^{k}(f\otimes f)(y,t_{l})\|_{L^{ \infty}({\mathbb{R}}^2 \times E_{j,k})} \lesssim 2^{j} \int_{\mathbb{R}^{2}}\sup_{(w_{2},z_{2})\in \mathbb{R}^{2}}|f(w_{1},w_{2})||f(z_{1},z_{2})|d w_{1}dz_{1}.
\end{equation}
\end{lemma}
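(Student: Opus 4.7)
The plan is to exploit the tensor-product structure of the bilinear symbol and reduce the statement to a linear pointwise bound on $A_{t,j}^k f$. Since $a_{j,k}(t_l,\xi,\zeta)=a_{j,k,t_l}(\xi)\,a_{j,k,t_l}(\zeta)$ and the phase $\tilde{\Phi}(t_l,\xi,\delta)+\tilde{\Phi}(t_l,\zeta,\delta)$ separates additively, one can write
\[
F_j^k(f\otimes f)(y,t_l) \;=\; G(y,t_l)^2, \qquad G(y,t_l) := \int_{\mathbb{R}^2} e^{i(y\cdot\xi - \tilde{\Phi}(t_l,\xi,\delta))}\, a_{j,k,t_l}(\xi)\,\hat{f}(\xi)\, d\xi.
\]
Direct comparison with (\ref{mainterm1}) yields $G(y,t_l) = 2^{j/2} A_{t_l,j}^k f\bigl(y+(0,ct_l^{a_2}2^{dk})\bigr)$. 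Writing $Sf(w_1):=\sup_{w_2}|f(w_1,w_2)|$, the right-hand side of (\ref{blinear3L1infty}) is $2^j \|Sf\|_{L^1(\mathbb{R})}^2$, so the lemma reduces to the pointwise estimate
\[
\sup_{y\in\mathbb{R}^2,\,t\in E_{j,k}}\;|A_{t,j}^k f(y)| \;\lesssim\; \|Sf\|_{L^1(\mathbb{R})}.
\]

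For this, I would establish the kernel bound
\[
\sup_{u_1,\, t\in[1,2]} \int_{\mathbb{R}}|K_{A,t,j,k}(u_1,v)|\, dv \;\lesssim\; 1,
\]
where $K_{A,t,j,k}$ is the convolution kernel of $A_{t,j}^k$; dominating $|f(w)|\le Sf(w_1)$ and applying Fubini then yields the desired pointwise estimate. I would approach the kernel bound through the decomposition $A_{t,j}^k = \widetilde{A_{t,j}^k} - R_{t,j}^k$. For $\widetilde{A_{t,j}^k}$, the symbol in (\ref{decomposeoperator1}) factorises as $\widehat{\tilde{\nu}_{t,k}}(\xi)\,\psi(2^{-j}|\delta_t\xi|)$, where $\tilde{\nu}_{t,k}$ is the curve measure $x\mapsto(t^{a_1}x,\,t^{a_2}(x^d\phi(x/2^k)+2^{dk}c))$ on $x\in[1/2,2]$ with density $\tilde{\rho}(x)\eta(x/2^k)\,dx$. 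Setting $L_{t,j}:=\mathcal{F}^{-1}[\psi(2^{-j}|\delta_t\cdot|)]$, a Schwartz kernel with $\|L_{t,j}\|_{L^1}\lesssim 1$ uniformly in $j,t$, we obtain $\widetilde{A_{t,j}^k}f = L_{t,j}*(\tilde{\nu}_{t,k}*f)$; the graph property $\gamma_1(x)=t^{a_1}x$ together with the substitution $u_1=y_1-t^{a_1}x$ then gives $|\tilde{\nu}_{t,k}*f(y)|\lesssim t^{-a_1}\|Sf\|_{L^1}\lesssim\|Sf\|_{L^1}$, hence $\|\widetilde{A_{t,j}^k}f\|_\infty\lesssim\|Sf\|_{L^1}$, which translates by duality/Fubini into $\sup_{u_1}\int|K_{\widetilde{A},t,j,k}(u_1,v)|\,dv\lesssim 1$.

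The remainder $R_{t,j}^k$ is handled in parallel: its kernel is localised in the same tube around the curve but with an amplitude factor $\lesssim 2^{-jN}$ inherited from (\ref{remainderterm}), so $\sup_{u_1}\int|K_{R,t,j,k}(u_1,v)|\,dv\lesssim 2^{-jN}$. Adding the two contributions yields the kernel bound for $K_{A,t,j,k}$, and then
\[
|A_{t,j}^k f(y)|\;\le\; \int Sf(w_1)\int|K_{A,t,j,k}(y_1-w_1,\,y_2-w_2)|\, dw_2\, dw_1 \;\lesssim\; \|Sf\|_{L^1},
\]
which, upon squaring and multiplying by $2^j$, recovers (\ref{blinear3L1infty}).

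The main technical obstacle is the kernel estimate $\sup_{u_1}\int|K_{A,t,j,k}(u_1,v)|\,dv\lesssim 1$, which reflects the FIO concentration of $K_A$ in a tube of width $\sim 2^{-j}$ around the graph curve with peak amplitude $\sim 2^j$ (consistent with (\ref{Linfty1})). For $\widetilde{A_{t,j}^k}$ the argument is clean via the convolution factorisation and the 1D parametrisation of the curve, which turns the transverse $v$-integral of the mollified curve measure kernel into a one-dimensional integral controlled by $\|L_{t,j}\|_{L^1}$. The delicate point is justifying the same structural tubular localisation for the stationary-phase remainder $R_{t,j}^k$; this is done by tracking through the derivation of (\ref{remainderterm}) in \cite{WL,LWZ} that $K_R$ inherits the tube geometry with additional amplitude decay $2^{-jN}$.
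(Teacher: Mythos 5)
Your opening reduction is exactly the one the paper uses: since the symbol factorizes as $a_{j,k}(t_l,\xi,\zeta)=a_{j,k,t_l}(\xi)a_{j,k,t_l}(\zeta)$ and the phase separates additively, $F_j^k(f\otimes f)(y,t_l)=G(y,t_l)^2$ with $G(y,t_l)=2^{j/2}A^k_{t_l,j}f\bigl(y+(0,ct_l^{a_2}2^{dk})\bigr)$, so the lemma is equivalent to the uniform transverse kernel bound
\[
\sup_{u_1\in\mathbb{R},\ t\in[1,2]}\int_{\mathbb{R}}|K_{A,t,j,k}(u_1,v)|\,dv\ \lesssim\ 1,
\]
$K_{A,t,j,k}$ being the convolution kernel of $A^k_{t,j}$. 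Where you and the paper diverge is in how this is established: the paper applies the second dyadic (angular) decomposition of the $\xi$-annulus into $\sim 2^{j/2}$ sectors of angular width $2^{-j/2}$, feeding in the sector-localized kernel decay estimates proved in Lemma~2.6 of~\cite{LWZ}; you instead pass to $\widetilde{A^k_{t,j}}$ via the factorization $\widetilde{A^k_{t,j}}f=L_{t,j}*(\tilde\nu_{t,k}*f)$, use the graph parametrization of the curve measure, and obtain $\sup_{u_1}\int|K_{\widetilde A,t,j,k}(u_1,v)|\,dv\lesssim 1$ cleanly and correctly. That step buys you a more elementary argument than the angular decomposition.

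The gap is in the passage from $\widetilde{A^k_{t,j}}$ back to $A^k_{t,j}$. The lemma is stated for $F_j^k$, which is built from the stationary-phase main term $A^k_{t,j}$, not from $\widetilde{A^k_{t,j}}$, so your route obligates you to show $\sup_{u_1}\int|K_{R,t,j,k}(u_1,v)|\,dv\lesssim 1$ as well. You propose to get this from (\ref{remainderterm}), but that inequality is an $L^p\to L^q$ operator-norm bound, which for a convolution operator only controls $\|K_R\|_{L^r}$ norms and says nothing about the $\sup_{u_1}\int\cdot\,dv$ quantity (a fibered mixed norm). To close the gap you would need a pointwise localization of $K_R$ — something like $|K_R(u)|\lesssim_M 2^{-jN}(1+|u|)^{-M}$ — which does not appear in the paper and cannot be read off from (\ref{remainderterm}); it would have to be extracted by re-deriving the stationary-phase remainder estimates in \cite{WL,LWZ}. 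You flag this as a ``delicate point,'' but it is precisely the step your proof leaves unresolved. The paper sidesteps the issue entirely by bounding $K_{A}$ directly, since $F_j^k$ is defined through $A^k_{t,j}$ and there is no reason to reintroduce $\widetilde{A^k_{t,j}}$ and $R^k_{t,j}$ at this stage.
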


\begin{lemma}\label{bilinear3lemma2}
For each $j \ge 1$, there holds
\begin{equation}\label{blinear3L22}
\|F_{j}^{k}(f\otimes f)(y,t_{l})\|_{L^{2}({\mathbb{R}}^2 \times E_{j,k})} \lesssim_{\gamma} (c2^{\frac{d\gamma k}{2}} +1) j  2^{j} \biggl( \int_{\mathbb{R}^{2}} \int_{\mathbb{R}^{2}}|f(w)|^{2}|f(z)|^{2}|w-z|^{-(2\gamma-1)}dwdz \biggl)^{1/2}.
\end{equation}
\end{lemma}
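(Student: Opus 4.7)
The plan is to adapt the bilinear $L^2$-strategy of Roos and Seeger \cite{Roos} (in particular the argument around their Proposition 4.2). Squaring the left-hand side and applying Plancherel in $y$ reduces matters to bounding
\begin{equation*}
\sum_{t_l \in E_{j,k}} \int_{\mathbb{R}^2} |H(t_l,\eta)|^2\, d\eta, \quad H(t,\eta) := \int e^{-i\Psi(t,\xi,\eta)} \mathcal{A}(t,\xi,\eta)\,\hat f(\xi)\,\hat f(\eta-\xi)\, d\xi,
\end{equation*}
where $\Psi(t,\xi,\eta) = \tilde\Phi(t,\xi,\delta) + \tilde\Phi(t,\eta-\xi,\delta)$ and $\mathcal A(t,\xi,\eta) = a_{j,k,t}(\xi)\, a_{j,k,t}(\eta-\xi)$. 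The target is $(c\,2^{d\gamma k/2}+1)^2 j^2 2^{2j}\iint|f(w)|^2|f(z)|^2|w-z|^{-(2\gamma-1)}dw\,dz$.

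I would first carry out a dyadic decomposition $f\otimes f = \sum_{\ell\ge 0}(f\otimes f)^{(\ell)}$ with $(f\otimes f)^{(\ell)}(w,z) = f(w)f(z)\chi_\ell(w-z)$ and $\chi_\ell$ a smooth bump supported on the annulus $|w-z|\sim 2^{-\ell}$. Since $|\delta_t\xi|\sim 2^j$ on the support of $a_{j,k,t}$, only $0\le \ell\lesssim j$ contributes. The elementary identity
\begin{equation*}
\sum_\ell 2^{(2\gamma-1)\ell}\|(f\otimes f)^{(\ell)}\|_{L^2}^2 \lesssim \iint|f(w)|^2|f(z)|^2|w-z|^{-(2\gamma-1)}\,dw\,dz,
\end{equation*}
combined with a Cauchy--Schwarz in $\ell$, reduces matters to the single-scale estimate
\begin{equation*}
\|F_j^k((f\otimes f)^{(\ell)})\|_{L^2(\mathbb{R}^2\times E_{j,k})} \lesssim_\gamma (c\,2^{d\gamma k/2}+1)\, 2^{j}\, 2^{(\gamma-1/2)\ell}\, \|(f\otimes f)^{(\ell)}\|_{L^2},
\end{equation*}
with the $\ell$-sum contributing only an acceptable $j^{1/2}$ loss.

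For each fixed $\ell$, I would combine two ingredients. The first is a \emph{single-$t$ bilinear curvature estimate}: the finite-type curvature of the underlying curve, encoded in $\partial^2_\xi \tilde\Phi$, provides a stationary-phase gain in the $\xi$-integral defining $H$ and yields $\|H^{(\ell)}(t,\cdot)\|_{L^2_\eta}\lesssim 2^j\, 2^{-\ell/2}\,\|(f\otimes f)^{(\ell)}\|_{L^2}$ uniformly in $t$. The second is an \emph{$L^2$-orthogonality between distinct $t_l\in E_{j,k}$}: the hypothesis $d a_1\ne a_2$ forces the $t$-derivative of the leading term $(d-1)t^{(da_1-a_2)/(d-1)}\xi_2(-\xi_1/d\xi_2)^{d/(d-1)}$ of $\tilde\Phi$ to be bounded below by a positive power of $|\xi|\sim 2^j$, so Proposition \ref{keylemma1} applies to the two-parameter family $\{e^{-i\Psi(t,\cdot)}\mathcal A(t,\cdot)\}_t$ and yields rapid off-diagonal decay in $|t_l-t_{l'}|$. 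Combined with the locally-constant property of Lemma \ref{locallyconstant}, this allows the sum over $t_l$ to be reduced to counting $E$ at scale $(c\,2^{dk}+1)^{-1}2^{-j}$ inside intervals of length $(c\,2^{dk}+1)^{-1}2^{-j+\ell}$; the Assouad bound then gives $2^{\gamma\ell}$ points per such interval and $(c\,2^{dk}+1)^\gamma$ intervals covering $[1,2]$, whose square root produces the prefactor $(c\,2^{d\gamma k/2}+1)$.

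The main obstacle will be verifying the $L^2$-orthogonality in $t$ uniformly across the separation scale $\ell$: the bilinear phase $\Psi$ degenerates on the diagonal $\{\xi=\eta-\xi\}$, so when $\ell$ is close to $j$ one must carefully establish both the non-degeneracy of $\partial_t\Psi$ on the relevant piece of frequency space and uniform symbol bounds on $\mathcal A$ that do not deteriorate with $\ell$. A secondary technical point is the perturbation term in $\tilde\Phi$ with prefactor $\delta^m=2^{-mk}$; its $t$-derivative is much smaller than that of the leading term, which is harmless here because $d a_1\ne a_2$, but this does fix the discretization scale in Lemma \ref{locallyconstant} at $(c\,2^{dk}+1)^{-1}$ and thereby produces the $k$-dependent prefactor on the right-hand side.
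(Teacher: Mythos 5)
Your proposal conflates two scales that the paper keeps separate, and this creates a genuine gap. You decompose only in the physical separation $\ell$, i.e.\ $\mathcal{F}_\ell$ supported on $|w-z|\sim 2^{-\ell}$, and then try to feed each piece directly into a single-$t$ curvature estimate and the $t$-orthogonality of Proposition~\ref{keylemma1}. But the paper's argument requires a further decomposition in the \emph{angular frequency separation} $|\xi_1/\xi_2-\zeta_1/\zeta_2|\sim 2^{-h}$, implemented by the bump functions $b_h$ built into the operator $T_j^k(\cdot,b_h)$ — this $h$-scale is not controlled by $\ell$. The operator $T^k_j$ acts on $\hat{\mathcal F}(\xi-\zeta,\zeta)$, and a physical localization of $\mathcal F$ to $|w-z|\sim 2^{-\ell}$ does not localize the angular Fourier separation of $(\xi-\zeta,\zeta)$ at scale $2^{-\ell}$. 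Consequently, neither of your two ingredients is actually available at a fixed $\ell$: the kernel estimate behind the $t$-orthogonality has the specific form $|K(t_l,t_{l'},\xi)|\lesssim 2^{2j-h}(1+2^{j-2h}|t_l-t_{l'}|)^{-N}$, in which the off-diagonal decay rate $2^{j-2h}$ (and hence the resolution at which you count $E$ via the Assouad bound) is tied to $h$, not to $\ell$; and the claimed single-$t$ gain $\|H^{(\ell)}(t,\cdot)\|_{L^2_\eta}\lesssim 2^j 2^{-\ell/2}\|(f\otimes f)^{(\ell)}\|_{L^2}$ is a curvature gain coming from the $\xi$-stationary-phase at the frequency scale $h$, which you have not fixed. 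In the line where you claim the sum over $t_l$ reduces to counting $E$ inside intervals of length $(c2^{dk}+1)^{-1}2^{-j+\ell}$, you are silently assuming $\ell=2h$, which is not a legitimate identification.

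The paper's proof handles this by a \emph{double} decomposition: first $T_j^k(\mathcal F,b)=\sum_{h=1}^{j/2}T_j^k(\mathcal F,b_h)$ in frequency angle, and then, for each fixed $h$, a decomposition of the input into physical scales $\mathcal F=\sum_{n\ge 0}\mathcal F_{-h+n}$ with $|w-z|\sim 2^{-h+n}$. The matched case $n=0$ (physical scale $\lesssim 2^{-h}$) is handled by Proposition~\ref{keylemma1} via the $t$-orthogonality you describe, while the mismatched case $n\ge 1$ (physical scale $\gg$ frequency scale) uses a different mechanism — nonstationary integration by parts, encoded in Proposition~\ref{keylemma2}, giving a gain $2^{-n}$ — that your outline does not mention and that does not reduce to $t$-orthogonality. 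If you reparametrize by the physical scale $\ell=h-n$, you will find that for each fixed $\ell$ you must still sum over $h$ from $\max(1,\ell)$ up to $j/2$ and invoke Proposition~\ref{keylemma2} for $h>\ell$; after carrying out that sum you will in fact recover the factor $j$, not $j^{1/2}$, so the Cauchy--Schwarz-in-$\ell$ accounting at the end is also not quite right (harmlessly, since the target allows $j$). The essential missing idea is the $b_h$ decomposition together with the $n\ge 1$ mechanism; without it the single-scale estimate you want to prove is not accessible by the tools you listed.
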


Now we state some details for the proof of inequality (\ref{blinear3}) based on Lemma \ref{bilinear3lemma1} and Lemma \ref{bilinear3lemma2} for the reader's convenience. When $\gamma = \frac{1}{2}$, inequality (\ref{blinear3}) follows from  Lemma \ref{bilinear3lemma2} directly. Notice that $\frac{3+2\gamma}{2}>2$ when $\gamma > \frac{1}{2}$,  then interpolation between (\ref{blinear3L1infty}) and (\ref{blinear3L22}) implies that
\begin{align}
&\|F_{j}^{k}(f\otimes f)(y,t_{l})\|_{L^{\frac{3+2\gamma}{2}}({\mathbb{R}}^2 \times E_{j,k})} \nonumber\\
%&\lesssim_{\gamma} (c2^{\frac{2d\gamma k}{3+2\gamma}} +1) j^{\frac{4}{3+2\gamma}}  2^{j} \biggl\| \bigl\| f(w_{1},w_{2})f(z_{1},z_{2})\bigl\|_{L_{(w_{2},z_{2})}^{\frac{3+2\gamma}{2}}(\mathbb{R}^{2})} |w_{1}-z_{1}|^{-\frac{2(2\gamma-1)}{3+2\gamma}} \biggl\|_{L_{(w_{1},z_{1})}^{\frac{3+2\gamma}{1+2\gamma}}(\mathbb{R}^{2})} \nonumber\\
&\lesssim_{\gamma} (c2^{\frac{2d\gamma k}{3+2\gamma}} +1) j^{\frac{4}{3+2\gamma}}  2^{j}  \biggl\| \|f(w_{1},\cdot ) \|_{L^{\frac{3+2\gamma}{2}}(\mathbb{R})} \big\|  \|f(z_{1},\cdot)\|_{L^{\frac{3+2\gamma}{2}}(\mathbb{R})} |w_{1}-z_{1}|^{-\frac{2(2\gamma-1)}{3+2\gamma}} \bigl\|_{L_{z_{1}}^{\frac{3+2\gamma}{1+2\gamma}}(\mathbb{R})}
\biggl\|_{L_{w_{1}}^{\frac{3+2\gamma}{1+2\gamma}}(\mathbb{R})}. \nonumber
\end{align}
By H\"{o}lder's inequality and the Hardy-Littlewood-Sobolev inequality, we have
\begin{align}
&\biggl\| \|f(w_{1},\cdot ) \|_{L^{\frac{3+2\gamma}{2}}(\mathbb{R})} \bigl\|  \|f(z_{1},\cdot)\|_{L^{\frac{3+2\gamma}{2}}(\mathbb{R})} |w_{1}-z_{1}|^{-\frac{2(2\gamma-1)}{3+2\gamma}} \bigl\|_{L_{z_{1}}^{\frac{3+2\gamma}{1+2\gamma}}(\mathbb{R})}
\biggl\|_{L_{w_{1}}^{\frac{3+2\gamma}{1+2\gamma}}(\mathbb{R})} \nonumber\\
&\lesssim  \|f\|_{L^{\frac{3+2\gamma}{2}}(\mathbb{R}^{2})} \biggl\| \bigl\|  \|f(z_{1},\cdot)\|_{L^{\frac{3+2\gamma}{2}}(\mathbb{R})} |w_{1}-z_{1}|^{-\frac{2(2\gamma-1)}{3+2\gamma}} \bigl\|_{L_{z_{1}}^{\frac{3+2\gamma}{1+2\gamma}}(\mathbb{R})}
\biggl\|_{L_{w_{1}}^{\frac{3+2\gamma}{2\gamma-1}}(\mathbb{R})} \nonumber\\
&\sim_{\gamma} \|f\|_{L^{\frac{3+2\gamma}{2}}(\mathbb{R}^{2})} \biggl\| \int_{\mathbb{R}}   \|f(z_{1},\cdot)\|^{\frac{3+2\gamma}{1+2\gamma}}_{L^{\frac{3+2\gamma}{2}}(\mathbb{R})} |w_{1}-z_{1}|^{-\frac{2(2\gamma-1)}{1+2\gamma}} dz_{1}
\biggl\|^{\frac{1+2\gamma}{3+2\gamma}}_{L_{w_{1}}^{\frac{1+2\gamma}{2\gamma-1}}(\mathbb{R})} \nonumber\\
&\lesssim_{\gamma} \|f\|_{L^{\frac{3+2\gamma}{2}}(\mathbb{R}^{2})} \biggl\|    \|f(w_{1},\cdot)\|^{\frac{3+2\gamma}{1+2\gamma}}_{L^{\frac{3+2\gamma}{2}}(\mathbb{R})}
\biggl\|^{\frac{1+2\gamma}{3+2\gamma}}_{L^{\frac{1+2\gamma}{2}}(\mathbb{R})} \nonumber\\
&\sim_{\gamma}  \|f\|^{2}_{L^{\frac{3+2\gamma}{2}}(\mathbb{R}^{2})}.
\end{align}
Then we arrive at inequality (\ref{blinear3}). Next we will prove Lemma \ref{bilinear3lemma1} and Lemma \ref{bilinear3lemma2}.

\textbf{Proof of Lemma \ref{bilinear3lemma1}.} We fix a collection $\{\kappa_{\nu}\}_{\nu \in \mathbb{N}^{+}}$ of real numbers with $|\kappa_{\nu}| \sim 1$ that satisfies: $(a)$ $|\kappa_{\nu}-\kappa_{\nu^{\prime}}|\geq |\nu - \nu^{\prime}| 2^{-j/2}$, if $\nu\neq\nu'$; $(b)$ if $\xi \in \{\xi \in \mathbb{R}^{2}: \frac{\xi_1}{\xi_2}\sim 1\}$, then there exists a $\kappa_{\nu}$ so that $|\frac{\xi_1}{\xi_2}-\kappa_{\nu}|<2^{-j/2}$. Let
\[K_{t}^{\nu}(y) =2^{-\frac{j}{2}} \int_{\mathbb{R}^{2}}e^{i(y \cdot \xi -\tilde{\Phi}(t,\xi, \delta))} \chi_{\nu}(\frac{\xi_{1}}{\xi_{2}}) a_{j,k,t}(\xi)\hat{f}(\xi)d\xi.\]
Here, $\chi_{\nu}$ is a smooth cutoff function supported in $\{\xi \in \mathbb{R}^{2}: |\frac{\xi_{1}}{\xi_{2}} - \kappa_{\nu}| \le 2^{-\frac{j}{2}}\}$. From the proof of Lemma 2.6 in \cite{LWZ}, for each $y \in \mathbb{R}^{2}$ and $t_{l} \in E_{j,k}$, there holds
\begin{align}
&|F_{j}^{k}(f\otimes f)(y,t_{l})| \nonumber\\
&\le 2^{4j} \int_{\mathbb{R}^{4}} \sum_{\nu}  |K_{t_{l}}^{\nu}(y-z)| \sum_{\nu^{\prime}}  |K_{t_{l}}^{\nu^{\prime}}(y-w)| |f(w)||f(z)|dwdz\nonumber\\
&\le2^{4j} \int_{\mathbb{R}^{2}} \sup_{(w_{2},z_{2}) \in \mathbb{R}^{2}} |f(w_{1},w_{2})||f(z_{1},z_{2})| \nonumber\\
&\quad \quad \times \sum_{\nu}  \int_{\mathbb{R}} |K_{t_{l}}^{\nu}(y_{1}-z_{1},y_{2}-z_{2})| dz_{2} \sum_{\nu^{\prime}} \int_{\mathbb{R}} |K_{t_{l}}^{\nu^{\prime}}(y_{1}-w_{1},y_{2}-w_{2})|dw_{2} dz_{1}dw_{1}, \nonumber
\end{align}
and
\begin{align}
&\sum_{\nu}  \int_{\mathbb{R}} |K_{t_{l}}^{\nu}(y_{1}-z_{1},y_{2}-z_{2})| dz_{2}  \nonumber\\
&\le 2^{-\frac{j}{2}}\sum_{\nu}C _{N} \{(1+ 2^{\frac{j}{2}}|y_{1}-z_{1}- c_{1}(d) t_{l}^{\frac{da_{1}-a_{2}}{d-1}} \kappa_{\nu}^{\frac{1}{d-1}}|   )\}^{-N/2} \nonumber\\
&\quad\times\int_{\mathbb{R}} \bigl\{(1  + 2^{j}|y_{2}-z_{2} +\kappa_{\nu}(y_{1}-z_{1})-c_{2}(d) t_{l}^{\frac{da_{1}-a_{2}}{d-1}} \kappa_{\nu}^{\frac{d}{d-1}}|   )\bigl\}^{-N/2}dz_{2} \nonumber\\
&\lesssim 2^{-\frac{3}{2} j}. \nonumber
\end{align}
Similarly,
\begin{align}
\sum_{\nu'}  \int_{\mathbb{R}} |K_{t_{l}}^{\nu'}(y_{1}-w_{1},y_{2}-w_{2})| dw_{2} \lesssim 2^{-\frac{3}{2} j}. \nonumber
\end{align}
Then we arrive at inequality (\ref{blinear3L1infty}).  $\hfill\square$

\textbf{Proof of Lemma \ref{bilinear3lemma2}.} A change of variables implies that
\[F_{j}^{k}(f\otimes f)(y,t_{l})=  \int_{\mathbb{R}^{2}} \int_{\mathbb{R}^{2}}e^{i(y \cdot \xi - \tilde{\Phi}(t_{l},\xi -\zeta, \delta) - \tilde{\Phi}(t_{l},\zeta, \delta)) }a_{j,k}(t_{l},\xi-\zeta, \zeta) \hat{f}(\xi-\zeta)\hat{f}(\zeta) d\zeta d\xi,\]
then by Plancherel's theorem,
\begin{align}
\|F_{j}^{k}(f\otimes f)(y,t_{l})\|_{L^{2}({\mathbb{R}}^2 \times E_{j,k})} = \biggl\| \int_{\mathbb{R}^{2}}e^{-i(\tilde{\Phi}(t_{l},\xi -\zeta, \delta)+ \tilde{\Phi}(t_{l},\zeta, \delta)) }a_{j,k}(t_{l},\xi-\zeta, \zeta) \hat{f}(\xi-\zeta)\hat{f}(\zeta) d\zeta \biggl\|_{L^{2}({\mathbb{R}}^2 \times E_{j,k})}. \nonumber
\end{align}
We note that $\zeta_{1} \sim 2^{j}$, $\zeta_{2} \sim 2^{j}$, $\xi_{1} - \zeta_{1} \sim 2^{j}$, $\xi_{2}-\zeta_{2} \sim 2^{j}$, which imply that $\xi_{1} \sim 2^{j}$, $\xi_{2} \sim 2^{j}$ and $\xi_{1}/\xi_{2} \sim 1$. Hence $|\frac{\xi_{1}}{\xi_{2}}- \frac{\zeta_{1}}{\zeta_{2}}| \lesssim 1$. Choose a non-negative bump function $b$ such that supp $b \subset B(0,C_{d,k,m})$ and denote
\[T_{j}^{k}(\mathcal{F}, b)(\xi,t_{l})=  \int_{\mathbb{R}^{2}}e^{-i(  \tilde{\Phi}(t_{l},\xi -\zeta, \delta) + \tilde{\Phi}(t_{l},\zeta, \delta)) }b \biggl(\frac{\xi_{1}}{\xi_{2}}-\frac{\zeta_{1}}{\zeta_{2}} \biggl)a_{j,k}(t_{l},\xi-\zeta, \zeta) \hat{\mathcal{F}}(\xi-\zeta,\zeta) d\zeta, \]
where $\hat{\mathcal{F}}(\xi-\zeta, \zeta) =\hat{f}(\xi - \zeta)\hat{f}(\zeta)$. We may assume that $C_{d,k,m}=1$ for convenience. Then we decompose $T_{j}^{k}(\mathcal{F},b)(\xi,t_{l}) = \sum_{h=1}^{j/2}T_{j}^{k}(\mathcal{F},b_{h})(\xi,t_{l})$, where $b_{h}$'s are even functions and supp $b_{h} \subset \{ \tau \in \mathbb{R}: |\tau| \sim 2^{-h} \}$ for $1 \le h < \frac{j}{2}$ and supp $b_{\frac{j}{2}} \subset \{ \tau \in \mathbb{R}: |\tau| \lesssim 2^{-j/2} \}$. For fixed $h$, we further decompose $\mathcal{F} = \sum_{n=0}^{\infty} \mathcal{F}_{-h+n}$ with supp $\mathcal{F}_{-h} \subset \{(w,z) \in \mathbb{R}^{2} \times \mathbb{R}^{2}: |z-w| \lesssim 2^{-h}\}$ and supp $\mathcal{F}_{-h+n} \subset \{(w,z) \in \mathbb{R}^{2} \times \mathbb{R}^{2}: |z-w| \sim 2^{-h+n}\}$, $n \ge 1$. By the similar argument as in {\cite[Page 15-17]{Roos}}, Lemma \ref{bilinear3lemma2} can be proved if the following propositions are established.

\begin{proposition}\label{keylemma1}
For each $j \geq1$  and $1 \le h \le \frac{j}{2}$, we have
\begin{equation}
\|T_{j}^{k}(\mathcal{F}_{-h}, b_{h})\|_{L^{2}(\mathbb{R}^{2} \times E_{j,k})} \lesssim 2^{j-\frac{h}{2}} (c2^{\frac{\gamma dk}{2}} +1) 2^{\gamma h} \|\mathcal{F}_{-h}\|_{L^{2}(\mathbb{R}^{2} \times \mathbb{R}^{2})}.
\end{equation}
\end{proposition}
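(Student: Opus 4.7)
The plan is to establish Proposition \ref{keylemma1} via a standard $TT^*$--Schur scheme whose main input is a non-stationary phase estimate exploiting the hypothesis $da_1 \neq a_2$. First I would compute the adjoint of $T_j^k$ and note that, after Plancherel in $\xi$, the composition $T_j^k (T_j^k)^*$ acts diagonally in $\xi$ with matrix entries
\begin{equation*}
M(t_l, t_{l'}, \xi) = \int_{\mathbb{R}^2} e^{-i\Psi(t_l, t_{l'}, \xi, \zeta)}\, A(t_l, t_{l'}, \xi, \zeta)\, d\zeta,
\end{equation*}
where $\Psi(t_l, t_{l'}, \xi, \zeta) = \tilde{\Phi}(t_l, \xi-\zeta, \delta) + \tilde{\Phi}(t_l, \zeta, \delta) - \tilde{\Phi}(t_{l'}, \xi-\zeta, \delta) - \tilde{\Phi}(t_{l'}, \zeta, \delta)$ and the amplitude $A$ packages the product $b_h(\xi_1/\xi_2 - \zeta_1/\zeta_2)\, a_{j,k,t_l}(\xi-\zeta)a_{j,k,t_l}(\zeta)\overline{a_{j,k,t_{l'}}(\xi-\zeta)a_{j,k,t_{l'}}(\zeta)}$. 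Schur's test then reduces the proposition to
\begin{equation*}
\sup_{\xi, t_l}\sum_{t_{l'} \in E_{j,k}} |M(t_l, t_{l'}, \xi)| \lesssim 2^{2j-h}\,(c\,2^{\gamma dk/2}+1)^2\, 2^{2\gamma h}.
\end{equation*}

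The core of the argument is a pointwise bound for $M$. On the support one has $|a_{j,k,t}| \lesssim 1$, and the constraint $|(\xi-\zeta)_1/(\xi-\zeta)_2 - \zeta_1/\zeta_2| \sim 2^{-h}$ cuts out a sector of area $\sim 2^{2j-h}$, giving the trivial bound $|M| \lesssim 2^{2j-h}$. To extract decay I would use that $da_1 \neq a_2$ forces the exponent $\alpha := (da_1 - a_2)/(d-1) \neq 0$, so the leading part of $\tilde{\Phi}(t, \eta)$, namely $-(d-1)\,t^\alpha \eta_2\,(-\eta_1/(d\eta_2))^{d/(d-1)}$, genuinely depends on $t$ and its difference produces a factor $t_l^\alpha - t_{l'}^\alpha \sim t_l - t_{l'}$. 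Parametrizing $\zeta = (sr, r)$ with $s = \zeta_1/\zeta_2 \sim 1$ and $r = \zeta_2 \sim 2^j$, and writing $u := (\xi_1 - sr)/(\xi_2 - r)$, a direct computation gives
\begin{equation*}
\partial_s\bigl[\Psi(t_l, t_{l'}, \xi, \zeta)\bigr] \sim (t_l^\alpha - t_{l'}^\alpha)\, r \bigl(s^{1/(d-1)} - u^{1/(d-1)}\bigr),
\end{equation*}
and on the support of $b_h$ one has $|s - u| \sim 2^{-h}$, hence $|\partial_s \Psi| \sim |t_l - t_{l'}|\, 2^{j-h}$. Since the $b_h$ cutoff contributes at most $2^{Nh}$ to $\partial_s^N A$, an $N$-fold integration by parts in $s$ followed by the routine $r$-integration (with Jacobian $r \sim 2^j$) produces
\begin{equation*}
|M(t_l, t_{l'}, \xi)| \lesssim_N 2^{2j-h}\,(1 + |t_l - t_{l'}|\, 2^{j-2h})^{-N}.
\end{equation*}
The remainder $R(t, \xi, \delta, d)$ in the definition of $\tilde{\Phi}$, which carries at least $m+1$ powers of $\delta = 2^{-k}$ and is negligible for $k$ large, is absorbed as a perturbation of this leading-order analysis.

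For the sum over $t_{l'}$, I would dyadically decompose $|t_l - t_{l'}| \sim 2^m \cdot 2^{2h-j}$ for $m \ge 0$. Since $E_{j,k}$ has spacing $(c 2^{dk}+1)^{-1} 2^{-j}$ and each shell has length at least this spacing, the Assouad-dimension bound (\ref{Assouad}) yields
\begin{equation*}
\#\{t_{l'} \in E_{j,k} : |t_l - t_{l'}| \sim 2^m \cdot 2^{2h-j}\} \lesssim \bigl((c 2^{dk}+1)\, 2^{m+2h}\bigr)^{\gamma}.
\end{equation*}
Choosing $N > \gamma$ and summing the geometric series in $m$ delivers the uniform estimate
\begin{equation*}
\sum_{t_{l'} \in E_{j,k}} |M(t_l, t_{l'}, \xi)| \lesssim 2^{2j-h}\,(c 2^{dk}+1)^{\gamma}\, 2^{2\gamma h},
\end{equation*}
which, after taking square roots and using $(c 2^{dk}+1)^{\gamma/2} \lesssim c\, 2^{\gamma dk/2} + 1$, gives the desired bound. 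The main technical obstacle is the phase analysis in the middle step: the radial $r$-derivative of the leading phase vanishes precisely at $u = s$, so genuine decay must be sought in the angular direction $s$; consequently one must track with care how derivatives of the $2^{-h}$-scale cutoff $b_h$ each produce a factor of $2^h$ under integration by parts, which is exactly what downgrades the effective oscillatory gain from the naive $2^{j-h}$ to the correct $2^{j-2h}$ in the denominator of the decay factor.
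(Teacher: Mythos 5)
Your proof is correct and takes essentially the same route as the paper's: a $TT^*$/duality argument built on the kernel estimate $|K(t_l,t_{l^{\prime}},\xi)| \lesssim_N 2^{2j-h}(1+2^{j-2h}|t_l - t_{l^{\prime}}|)^{-N}$, obtained by exploiting the angular gradient of the phase difference (nontrivial precisely because $da_1 \neq a_2$ forces $t_l^{\alpha}-t_{l^{\prime}}^{\alpha} \sim t_l - t_{l^{\prime}}$), combined with the Assouad-dimension count on $E_{j,k}$. The remaining differences are cosmetic: you use Schur's test with dyadic shells in $|t_l-t_{l^{\prime}}|$ where the paper uses Cauchy--Schwarz after duality together with a decomposition of $E_{j,k}$ into lattice-structured subsets, and you compute in the polar coordinates $\zeta = (sr,r)$ where the paper uses the affine change $\zeta = B\eta$ — both normalize the $b_h$-support to unit scale and yield identical bounds.
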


\begin{proposition}\label{keylemma2}
For each $j \geq 1$, $n \ge 1$ and $1 \le h \le \frac{j}{2}$, there holds
\begin{equation}
\|T_{j}^{k}(\mathcal{F}_{-h+n}, b_{h})\|_{L^{2}(\mathbb{R}^{2} \times E_{j,k})} \lesssim 2^{\frac{3}{2}h-n} (c2^{\frac{\gamma dk}{2}} +1) 2^{\frac{\gamma j}{2}} \|\mathcal{F}_{-h+n}\|_{L^{2}(\mathbb{R}^{2} \times \mathbb{R}^{2})}.
\end{equation}
\end{proposition}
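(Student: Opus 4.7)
The plan is to establish Proposition \ref{keylemma2} by combining an orthogonality argument in the $t_l$-variable with a decay mechanism that exploits the off-diagonal support of $\mathcal{F}_{-h+n}$. The overall scheme mirrors the analogous off-diagonal estimate in \cite[Section 4]{Roos}, but the phase $\tilde{\Phi}(t,\xi,\delta)$ from \eqref{mainterm1} and the translation $ct^{a_2}2^{dk}\xi_{2}$ in the exponent both need to be tracked carefully, since they interact with the Assouad-type count of $E_{j,k}$ dictated by Lemma \ref{locallyconstant}.

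First, I would apply Plancherel in $\xi$ to rewrite
\[
\|T_j^k(\mathcal{F}_{-h+n},b_h)\|_{L^2(\mathbb{R}^2\times E_{j,k})}^2 \;=\; \sum_{t_l\in E_{j,k}} \|\widetilde{K}_{t_l}\mathcal{F}_{-h+n}\|_{L^2_{\xi}}^2,
\]
where $\widetilde{K}_{t_l}$ is the integral operator in $\zeta$ with symbol $b_h\bigl(\tfrac{\xi_1}{\xi_2}-\tfrac{\zeta_1}{\zeta_2}\bigr)a_{j,k}(t_l,\xi-\zeta,\zeta)$, which angularly localizes $\zeta$ to a tube of width $\sim 2^{j-h}$ inside $\{|\zeta|\sim 2^j\}$. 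A Hessian computation for $\zeta\mapsto\tilde{\Phi}(t_l,\xi-\zeta,\delta)+\tilde{\Phi}(t_l,\zeta,\delta)$ shows nondegeneracy of size $\sim 2^{-j}$ in the mixed directions, so a single integration by parts in $\zeta$ against the spatial separation $|w-z|\sim 2^{-h+n}$ in the support of $\mathcal{F}_{-h+n}$ extracts a factor of order $(|w-z|\cdot 2^j)^{-1}\sim 2^{h-n-j}$. Combining this with the natural $L^{\infty}\to L^2$ cost $\sim 2^{j-h/2}$ of the angularly localized kernel is what should produce the $2^{\frac{3}{2}h-n}$ prefactor.

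Second, I would implement the $t_l$-orthogonality by a $TT^*$ argument at the level of the inner operator. Expanding the square and summing over $t_l$, the kernel of $T_j^{k*}T_j^k$ splits into the diagonal contribution $t_l=t_{l'}$ and the off-diagonal part. For the off-diagonal part, one integration by parts in $\zeta$ against the difference $\tilde{\Phi}(t_l,\cdot,\delta)-\tilde{\Phi}(t_{l'},\cdot,\delta)$ produces decay of the form $\bigl((c2^{dk}+1)|t_l-t_{l'}|\bigr)^{-N}$, matching precisely the spacing of $E_{j,k}$ from Lemma \ref{locallyconstant}. Since the Assouad-dimension assumption on $E$ gives $\#E_{j,k}\lesssim\bigl((c2^{dk}+1)\,2^j\bigr)^{\gamma}$, summing the diagonal terms and the convergent off-diagonal sum yields the prefactor $(c2^{\gamma dk/2}+1)\,2^{\gamma j/2}$.

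The main obstacle is the second step: one must verify that after incorporating the non-isotropic dilation $\delta_t$ and the $c$-dependent term $ct^{a_2}2^{dk}\xi_{2}$, the $t$-derivatives of the combined phase are bounded below (up to size $(c2^{dk}+1)$) uniformly on the support of the symbol, so that the integration-by-parts argument produces the clean $\bigl((c2^{dk}+1)|t_l-t_{l'}|\bigr)^{-N}$ decay. Here the hypothesis $da_1\neq a_2$, which governed the spacing of $E_{j,k}$, is essential; without it the $t$-derivatives of $\tilde{\Phi}$ could degenerate and the orthogonality would collapse. Once that lower bound is in place, collecting the $2^{\frac{3}{2}h-n}$ gain from the first step with the $(c2^{\gamma dk/2}+1)\,2^{\gamma j/2}$ count from the second step yields the claimed inequality.
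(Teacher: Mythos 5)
Your first step — extracting the factor $2^{\frac{3}{2}h-n}$ by integrating by parts in $\zeta$ against the spatial separation $|w-z|\sim 2^{-h+n}$ and then invoking a Calder\'on–Vaillancourt-type bound for the resulting symbol — is exactly the mechanism the paper uses (after the change of variables $\zeta = B\eta$ they verify the symbol estimates (\ref{quasidifferntialopertor}) and then appeal to \cite{CV} to get (\ref{offdianol}), followed by a single Cauchy–Schwarz in $\zeta$). The bookkeeping in your sketch (you claim one integration by parts gains $(|w-z|2^j)^{-1}\sim 2^{h-n-j}$, to be multiplied by a $2^{j-h/2}$ kernel cost, which would only give $2^{h/2-n}$) does not match the stated $2^{\frac{3}{2}h-n}$, but the underlying idea of that step is right.

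Your second step, however, is not the paper's argument and, as stated, does not work. For Proposition \ref{keylemma2} the paper uses \emph{no} almost-orthogonality in the $t_l$-variable: after the uniform pointwise estimate $|T_j^k(\mathcal{F}_{-h+n},b_h)(\xi,t_l)|\le 2^{\frac{3h}{2}-n}\|\hat{\mathcal F}_{-h+n}(\xi,\xi-\zeta)\|_{L^2_\zeta}$, it simply squares, integrates in $\xi$, sums over $t_l$, and pays the full cardinality bound $\mathrm{Card}(E_{j,k})\lesssim (c2^{d\gamma k}+1)2^{\gamma j}$; the $(c2^{\gamma dk/2}+1)2^{\gamma j/2}$ prefactor is nothing but $\mathrm{Card}(E_{j,k})^{1/2}$. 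The $TT^*$ orthogonality you describe belongs to the proof of Proposition \ref{keylemma1} (the near-diagonal case $n=0$), where the relevant decay is $(1+2^{j-2h}|t_l-t_{l'}|)^{-N}$ — the $\zeta$-gradient of $\tilde\Phi(t_l,\cdot,\delta)-\tilde\Phi(t_{l'},\cdot,\delta)$ scales like $2^{j-2h}|t_l-t_{l'}|$, not like $(c2^{dk}+1)|t_l-t_{l'}|$. The term $ct^{a_2}2^{dk}\xi_2$ is linear in $\xi$, factors out as a translation, and is not present in the phase of $T_j^k$ at all, so it cannot furnish the $(c2^{dk}+1)$ scale in the integration-by-parts decay. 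Moreover, even with the correct rate $2^{j-2h}|t_l-t_{l'}|$, the spacing $2^{-j}(c2^{dk}+1)^{-1}$ of $E_{j,k}$ is so fine that the decay is inactive until $|t_l-t_{l'}|\gg 2^{-j+2h}$; this is precisely why Proposition \ref{keylemma1} has to first split $E_{j,k}$ into subsets with spacing $2^{-j+2h}$ and count those subsets by the Assouad bound. So the "convergent off-diagonal sum" you posit does not happen; if you try to run the orthogonality argument for $n\ge 1$ you simply recover the trivial cardinality bound, and the argument must be structured the paper's way: off-diagonal spatial gain ($n\ge1$) and $t$-orthogonality ($n=0$) as two \emph{separate} mechanisms.
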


More precisely, if the above two propositions hold true, then
\begin{align}
&\|T_{j}^{k}(\mathcal{F}, b)(y,t_{l})\|_{L^{2}({\mathbb{R}}^2 \times E_{j,k})} \nonumber\\
&\le  \sum_{h=1}^{j/2}
\|T_{j}^{k}(\mathcal{F}_{-h}, b_{h})(y,t_{l})\|_{L^{2}({\mathbb{R}}^2 \times E_{j,k})} +  \sum_{h=1}^{j/2} \sum_{n=1}^{\infty} \|T_{j}^{k}(\mathcal{F}_{-h+n}, b_{h})(y,t_{l})\|_{L^{2}({\mathbb{R}}^2 \times E_{j,k})} \nonumber\\
&\le  \sum_{h=1}^{j/2} 2^{j}(c2^{\frac{\gamma dk}{2}} +1)\biggl( \int_{\mathbb{R}^{2}} \int_{\mathbb{R}^{2}}|f(w)|^{2}|f(z)|^{2}|w-z|^{-(2\gamma-1)}dwdz \biggl)^{1/2} \nonumber\\
&\quad \quad + \sum_{h=1}^{j/2} \sum_{n=1}^{\infty} 2^{\frac{3}{2}h-n} (c2^{\frac{\gamma dk}{2}} +1) 2^{\frac{\gamma j}{2}} 2^{(-h+n)(\gamma - \frac{1}{2})}  \biggl( \int_{\mathbb{R}^{2}} \int_{\mathbb{R}^{2}}|f(w)|^{2}|f(z)|^{2}|w-z|^{-(2\gamma-1)}dwdz \biggl)^{1/2} \nonumber\\
 % &\le  j 2^{j}\biggl( \int_{\mathbb{R}^{2}} \int_{\mathbb{R}^{2}}|f(w)|^{2}|f(z)|^{2}|w-z|^{-(2\gamma-1)}dwdz \biggl)^{1/2} \nonumber\\
 % &\quad \quad + \sum_{h=1}^{\frac{j-mk}{2}}  2^{(2-\gamma)h} 2^{\frac{\gamma j}{2}}2^{-\frac{\gamma mk}{2}} \biggl( \int_{\mathbb{R}^{2}} \int_{\mathbb{R}^{2}}|f(w)|^{2}|f(z)|^{2}|w-z|^{-(2\gamma-1)}dwdz \biggl)^{1/2} \nonumber\\
&\lesssim  j 2^{j}(c2^{\frac{\gamma dk}{2}}+1)\biggl( \int_{\mathbb{R}^{2}} \int_{\mathbb{R}^{2}}|f(w)|^{2}|f(z)|^{2}|w-z|^{-(2\gamma-1)}dwdz \biggl)^{1/2}.
\end{align}

The original idea for proving   the above two propositions is derived from {\cite[Proposition 4.2 and Proposition 4.3]{Roos}}. Hence we first write the details which are different in our case, and then include some similar details for completeness.

\textbf{Proof of Proposition \ref{keylemma1}:} The case when $h = \frac{j}{2}$ can be easily obtained by Cauchy-Schwarz inequality and Plancherel's theorem. In order to prove the case when $1 \le h < \frac{j}{2}$, we define
\[K(t_{l},t_{l^{\prime}}, \xi) =  \int_{\mathbb{R}^{2}}e^{i \Phi_{\xi,\delta}(t_{l},t_{l^{\prime}},\zeta)}b^{2}_{h} \biggl(\frac{\xi_{1}}{\xi_{2}}-\frac{\zeta_{1}}{\zeta_{2}} \biggl)a_{j,k,t_{l},t_{l^{\prime}}}(\xi, \zeta) d\zeta,\]
where  $a_{j,k,t_{l},t_{l^{\prime}}}(\xi, \zeta)= a_{j,k}(t_{l},\xi-\zeta, \zeta) a_{j,k}(t_{l^{\prime}},\xi-\zeta, \zeta) $, and
\[\Phi_{\xi,\delta}(t_{l},t_{l^{\prime}},\zeta) = \tilde{\Phi}(t_{l^{\prime}},\xi-\zeta, \delta)-\tilde{\Phi}(t_{l},\xi-\zeta, \delta)+\tilde{\Phi}(t_{l^{\prime}},\zeta,\delta)-\tilde{\Phi}(t_{l},\zeta,\delta). \]
By the similar argument as in the proof of {\cite[Proposition 4.2]{Roos}}, the case when $1 \le h < \frac{j}{2}$
 can be established provided that we have proved the following estimate
\begin{equation}\label{orthognality1}
|K(t_{l},t_{l^{\prime}}, \xi)| \le 2^{2j-h} \frac{C_{N}}{(1+2^{j-2h}|t_{l}-t_{l^{\prime}}|)^{N}}, \quad \quad N \gg 1
\end{equation}
for each $\xi$ and  $t_{l},t_{l^{\prime}} \in E_{j,k}$. %Next we will prove inequality (\ref{orthognality1}).
Changing variables $\zeta= B\eta$, where $B=\left(
\begin{array} {lcr}
2^{-h}\xi_{2} & \xi_{1}  \\
0 & \xi_{2}
\end{array}
\right)$, then we get
\[K(t_{l},t_{l^{\prime}}, \xi) = \xi_{2}^{2} 2^{-h}  \int_{\mathbb{R}^{2}}e^{i \Phi_{h, \xi,\delta}(t_{l},t_{l^{\prime}},\eta)}\tilde{b}^{2} \biggl(\frac{\eta_{1}}{\eta_{2}} \biggl)a_{j,k,h,t_{l},t_{l^{\prime}},\xi}(\eta) d\eta,\]
where $\tilde{b}$ is a bump function supported in $\{r \in \mathbb{R}: |r| \sim 1\}$,
\[\Phi_{h,\xi,\delta}(t_{l},t_{l^{\prime}},\eta)= \Phi_{\xi, \delta}(t_{l},t_{l^{\prime}}, B\eta),\]
and
\[a_{j,k,h,t_{l},t_{l^{\prime}},\xi}(\eta)= a_{j,k,t_{l},t_{l^{\prime}}}(\xi, B \eta).\]
By Taylor's expansion, $\Phi_{h, \xi, \delta}(t_{l},t_{l^{\prime}},\eta)$ can be written as
\begin{align}
% &= (d-1)\xi_{2} (1-\eta_{2})  \biggl[-\frac{\xi_{1}}{d\xi_{2}} + 2^{-h}\frac{\eta_{1}}{d(1-\eta_{2})} \biggl]^{\frac{d}{d-1}}  + (d-1) \xi_{2} \eta_{2}  \biggl(-\frac{\xi_{1}}{d\xi_{2}} -2^{-h}\frac{\eta_{1}}{d\eta_{2}} \biggl)^{\frac{d}{d-1}} \nonumber\\
 %&= (d-1)\xi_{2} (1-\eta_{2})  \biggl(-\frac{\xi_{1}}{d\xi_{2}} \biggl)^{\frac{d}{d-1}} + 2^{-h}\xi_{2} \eta_{1} \biggl(-\frac{\xi_{1}}{d\xi_{2}} \biggl)^{\frac{1}{d-1}} + \frac{2^{-2h-1}\xi_{2}}{d(d-1)}\biggl(-\frac{\xi_{1}}{d\xi_{2}} \biggl)^{\frac{2-d}{d-1}} \frac{\eta_{1}^{2}}{(1-\eta_{2})} \nonumber\\
 % &\quad \quad + (d-1) \xi_{2} \eta_{2}  \biggl(-\frac{\xi_{1}}{d\xi_{2}} \biggl)^{\frac{d}{d-1}} - 2^{-h}\xi_{2} \eta_{1} \biggl(-\frac{\xi_{1}}{d\xi_{2}} \biggl)^{\frac{1}{d-1}} + \frac{2^{-2h-1}\xi_{2}}{d(d-1)}\biggl(-\frac{\xi_{1}}{d\xi_{2}} \biggl)^{\frac{2-d}{d-1}} \frac{\eta_{1}^{2}}{\eta_{2}} + 2^{-3h} \xi_{2} R(\xi, \eta) \nonumber\\
  &\biggl(t_{l}^{\frac{da_{1}-a_{2}}{d-1}}- t_{l^{\prime}}^{\frac{da_{1}-a_{2}}{d-1}}\biggl) \biggl[(d-1) \xi_{2}  \biggl(-\frac{\xi_{1}}{d\xi_{2}} \biggl)^{\frac{d}{d-1}}  + \frac{2^{-2h-1}\xi_{2}}{d(d-1)}\biggl(-\frac{\xi_{1}}{d\xi_{2}} \biggl)^{\frac{2-d}{d-1}} \biggl( \frac{\eta_{1}^{2}}{\eta_{2}} +  \frac{\eta_{1}^{2}}{1-\eta_{2}} \biggl)\biggl] \nonumber\\
  & -(t^{\frac{(a_{1}-a_{2})(m+d)}{d-1}+a_{2}}_{l}-t^{\frac{(a_{1}-a_{2})(m+d)}{d-1}+a_{2}}_{l^{\prime}})\frac{\delta^{m} \phi^{m}(0)}{m!} \xi_{2} \biggl[ \biggl( \frac{\xi_{1}}{-d \xi_{2}}\biggl)^{\frac{d+m}{d-1}} + 2^{-2h-1}\biggl( \frac{\xi_{1}}{-d \xi_{2}}\biggl)^{\frac{m-d+2}{d-1}} \nonumber\\
  &\times \frac{(m+1)(m+d)}{d^{2}(d-1)^{2}}   \biggl(\frac{\eta_{1}^{2}}{1-\eta_{2}}+ \frac{\eta_{1}^{2}}{\eta_{2}}  \biggl) \biggl]   + 2^{-3h} \xi_{2} R_{t_{l},t_{l^{\prime}}}(\xi, \eta,\delta),
\end{align}
where we put the remainder terms into $R_{t_{l},t_{l^{\prime}}}(\xi, \eta,\delta)$.  Notice that $\xi_{2} \sim 2^{j}$ and $\xi_{2} - \zeta_{2} \sim 2^{j}$ imply $1-\eta_{2} \sim 1$, $\xi_{2} \sim 2^{j}$ and $\zeta_{2} \sim 2^{j}$ imply $\eta_{2} \sim 1$. Since $\eta_{1}/\eta_{2} \sim 1$, then we have $\eta_{1}/(1-\eta_{2}) \sim 1$. Now it is clear that
\[|\nabla_{\eta} \Phi_{h, \xi,\delta}(t_{l},t_{l^{\prime}},\eta)| \sim 2^{j-2h}\biggl|t_{l^{\prime}}^{\frac{da_{2}-a_{1}}{d-1}}-t_{l}^{\frac{da_{2}-a_{1}}{d-1}}\biggl| \sim 2^{j-2h} |t_{l}-t_{l^{\prime}}|,\]
and
\[|D_{\eta}^{\alpha} \Phi_{h, \xi,\delta}(t_{l},t_{l^{\prime}},\eta)| \lesssim 2^{j-2h} |t_{l^{\prime}} -t_{l}   |, \quad \quad |\alpha| \ge 2.\]
Since $a_{j,k,h,t_{l},t_{l^{\prime}}, \xi}$ is a symbol of order zero, we arrive at inequality (\ref{orthognality1}) by integration by parts.

For completeness, we prove Proposition \ref{keylemma1} based on the kernel estimate (\ref{orthognality1}). We decompose the set $E_{j,k}$ into $E_{j,k}=\cup_{s=1}^{M}E_{j,k,s}$, where each $E_{j,k,s}$ satisfies $E_{j,k,s}-E_{j,k,s} \subset 2^{-j+2h}\mathbb{Z}$. According to inequality (3.4) in \cite{AHRS}, we have $M \lesssim (c2^{\gamma dk}+c)2^{2\gamma h}$. For fixed $E_{j,k,s}$, by duality,
\begin{align}
&\|T_{j}^{k}(\mathcal{F}_{-h}, b_{h})\|_{L^{2}(\mathbb{R}^{2} \times E_{j,k,s})} \nonumber\\
 &=\sup_{G: \|G\|_{L^{2}(\mathbb{R}^{2} \times E_{j,k,s})} \le 1} \biggl|\int_{\mathbb{R}^{2}}  \int_{\mathbb{R}^{2}} \hat{\mathcal{F}}(\xi-\zeta,\zeta) \sum_{t_{l} \in E_{j,k,s}}e^{-i\{  \tilde{\Phi}(t_{l},\xi -\zeta, \delta) + \tilde{\Phi}(t_{l},\zeta, \delta)) }\nonumber\\
 &\quad \times b_{h} \biggl(\frac{\xi_{1}}{\xi_{2}}-\frac{\zeta_{1}}{\zeta_{2}} \biggl) a_{j,k}(t_{l},\xi-\zeta, \zeta)G(\xi,t_{l}) d\xi d\zeta \biggl| \nonumber\\
 &\le \|\mathcal{F}\|_{L^{2}(\mathbb{R}^{4})}\sup_{G: \|G\|_{L^{2}(\mathbb{R}^{2} \times E_{j,k,s}) } \le 1} \biggl(\sum_{t_{l} \in E_{j,k,s}} \sum_{t_{l^{\prime}} \in E_{j,k,s}} \int_{\mathbb{R}^{2}} K(t_{l},t_{l^{\prime}},\xi)G(\xi,t_{l}) \bar{G}(\xi,t_{l^{\prime}}) d\xi \biggl)^{\frac{1}{2}}. \nonumber
\end{align}
If (\ref{orthognality1}) remains valid, then we have
\begin{align}
&\sum_{t_{l} \in E_{j,k,s}} \sum_{t_{l^{\prime}} \in E_{j,k,s}} \int_{\mathbb{R}^{2}} K(t_{l},t_{l^{\prime}},\xi)G(\xi,t_{l}) \bar{G}(\xi,t_{l^{\prime}}) d\xi  \nonumber\\
&\le C_{N} 2^{2j-h} \sum_{t_{l} \in E_{j,k,s}} \sum_{t_{l^{\prime}} \in E_{j,k,s}} \int_{\mathbb{R}^{2}} \frac{ |G(\xi,t_{l}) \bar{G}(\xi,t_{l^{\prime}})| }{(1+2^{j-2h}|t_{l}-t_{l^{\prime}}|)^{N}}d\xi \nonumber\\
&\le C_{N} 2^{2j-h} \sum_{t \in E_{j,k,s}-E_{j,k,s}} \sum_{t_{l^{\prime}} \in E_{j,k,s}} \int_{\mathbb{R}^{2}} \frac{ |G(\xi,t_{l^{\prime}}+t) \bar{G}(\xi,t_{l^{\prime}})| }{(1+2^{j-2h}|t|)^{N}}d\xi,
\end{align}
where we have used Cauchy-Schwarz inequality on the sum over $t_{l'}$, and the fact that $E_{j,k,s}-E_{j,k,s} \subset 2^{-j+2h}\mathbb{Z}$ to sum over $t$. Then Proposition \ref{keylemma1}  follows.
 $\hfill\square$

\textbf{Proof of Proposition \ref{keylemma2}.}  By the proof of {\cite[Proposition 4.3]{Roos}}, it suffices to show that
\begin{equation}\label{offdianol}
\biggl\| \int_{\mathbb{R}^{2}}e^{i w \cdot \zeta} a_{t_{l}, j,k,h,\xi}(\zeta,w) \psi(2^{h-n}w) \hat{\mathcal{F}}^{y}_{-h+n}(\xi, \xi -w)dw \biggl\|_{L_{\zeta}^{2}(\mathbb{R}^{2})} \lesssim 2^{2h-j-n} \|\hat{\mathcal{F}}_{-h+n}(\xi, \xi - \zeta)\|_{L_{\zeta}^{2}(\mathbb{R}^{2})},
\end{equation}
where $\psi$ is a bump function supported in $\{r \in \mathbb{R}: |r| \sim 1\}$, $\hat{\mathcal{F}}_{-h+n}^{y}(\xi,\xi-w)$ means the Fourier transform of $\mathcal{F}_{-h+n}(y,y-w)$ with respect to $y$, and
\[a_{t_{l},j,k,h, \xi}(\zeta,w) = i \frac{\langle \nabla_{\zeta} \tilde{a}_{t_{l},j,k,h, \xi}, \nabla_{\zeta} \Phi_{\xi,w,t_{l},\delta} \rangle}{|\nabla_{\zeta} \Phi_{\xi,w,t_{l},\delta}|^{2}} - 2i \tilde{a}_{t_{l},j,k,h, \xi} \frac{\langle H_{\zeta}(\Phi_{\xi,w,t_{l},\delta}) \nabla_{\zeta} \Phi_{\xi,w,t_{l},\delta}, \nabla_{\zeta} \Phi_{\xi,w,t_{l},\delta} \rangle}{|\nabla_{\zeta} \Phi_{\xi,w,t_{l},\delta}|^{4}}. \]
Here, $\tilde{a}_{t_{l},j,k,h,\xi}( \zeta)= b_{h}(\frac{\xi_{1}}{\xi_{2}}-\frac{\zeta_{1}}{\zeta_{2}})a_{j,k}(t_{l}, \xi-\zeta, \zeta)$, $\Phi_{\xi,w,t_{l},\delta}(\zeta)= w \cdot \zeta - \tilde{\Phi}(t_{l},\xi-\zeta,\delta)- \tilde{\Phi}(t_{l},\zeta,\delta)$, and $H_{\zeta}$ means the Heissian matrix with respect to $\zeta$. According to \cite{CV}, inequality (\ref{offdianol}) follows if we can show that for each multi-index $\alpha$ and $\beta$, there holds
\begin{equation}\label{quasidifferntialopertor}
|D_{\zeta}^{\beta}D_{w}^{\alpha} [a_{t_{l},j,k,h, \xi}(\zeta,w)] | \le 2^{2h-j-n} (1+|\zeta|)^{-\frac{1}{2}(|\beta|-|\alpha|)}.
\end{equation}
Indeed, %since
%\begin{align}
%&\nabla_{\zeta}\Phi_{\xi,w,t_{l},\delta}(\zeta) \nonumber\\
%&=\biggl(w_{1} + \biggl[-\frac{\xi_{1} - \zeta_{1}}{d(\xi_{2}-\zeta_{2})}\biggl]^{\frac{1}{d-1}} - \biggl(-\frac{
%\zeta_{1}}{d \zeta_{2}}\biggl)^{\frac{1}{d-1}} + o(2^{-mk-h}),w_{2} +  \biggl[-\frac{\xi_{1} - %\zeta_{1}}{d(\xi_{2}-\zeta_{2})}\biggl]^{\frac{d}{d-1}} - \biggl(-\frac{
%\zeta_{1}}{d \zeta_{2}}\biggl)^{\frac{d}{d-1}} \biggl)+ o(2^{-mk-h}).\nonumber
%\end{align}
changing variables $\zeta = B\eta$ with  $B=\left(
\begin{array} {lcr}
2^{-h}\xi_{2} & \xi_{1}  \\
0 & \xi_{2}
\end{array}
\right)$,
we obtain
\[\biggl|\frac{\xi_{1}-\zeta_{1}}{\xi_{2}-\zeta_{2}} - \frac{\zeta_{1}}{\zeta_{2}} \biggl| = 2^{-h} \biggl|\frac{\eta_{1}}{1-\eta_{2}} + \frac{\eta_{1}}{\eta_{2}}\biggl| \sim 2^{-h}.\]
Then the mean value theorem and the fact that $|w| \sim 2^{-h+n}$ imply that
\[|\nabla_{\zeta}\Phi_{\xi,w,t_{l},\delta}(\zeta)| \sim 2^{-h+n}.\]
Moreover, it is easy to observe that $|\nabla_{\zeta} \tilde{a}_{t_{l},j,k,h,\xi}(\zeta)| \lesssim 2^{h-j}$ and $|D^{\alpha}_{\zeta}\Phi_{\xi,w,t_{l},\delta}(\zeta)| \lesssim 2^{-j}$ for each $\alpha$ such that $|\alpha| = 2$.
Therefore, we have
\[| a_{t_{l},j,k,h, \xi}(\zeta,w) | \le 2^{2h-j-n},\]
and
\[|D_{\zeta}^{\beta}D_{w}^{\alpha} [a_{t_{l},j,k,h, \xi}(\zeta,w)] | \le 2^{2h-j-n} 2^{(h-n)|\alpha|} 2^{(h-j)|\beta|},  \]
for each multi-index $\alpha$ and $\beta$. Then inequality (\ref{quasidifferntialopertor}) follows from $h \le \frac{j}{2}$ and $|\zeta| \sim 2^{j}$.

We briefly explain how to derive Proposition \ref{keylemma2} from inequality (\ref{offdianol}), we write
\begin{align}
&T_{j}^{k}(\mathcal{F}_{-h+n}, b_{h})(\xi,t_{l}) \nonumber\\
&= \int_{\mathbb{R}^{4}}e^{-i(  \tilde{\Phi}(t_{l},\xi -\zeta, \delta) + \tilde{\Phi}(t_{l},\zeta, \delta)) }b_{h} \biggl(\frac{\xi_{1}}{\xi_{2}}-\frac{\zeta_{1}}{\zeta_{2}} \biggl)a_{j,k}(t_{l},\xi-\zeta, \zeta) \hat{\mathcal{F}}_{-h+n}(\xi-\zeta,\zeta) d\zeta \nonumber\\
&=\int_{\mathbb{R}^{4}} e^{-iy\cdot \xi} \psi(2^{h-n}(y-z)) \mathcal{F}_{-h+n}(y,z) \int_{\mathbb{R}^{2}}e^{i(y-z) \cdot \zeta-i( \tilde{\Phi}(t_{l},\xi -\zeta, \delta) + \tilde{\Phi}(t_{l},\zeta, \delta)) }b_{h} \biggl(\frac{\xi_{1}}{\xi_{2}}-\frac{\zeta_{1}}{\zeta_{2}} \biggl) \nonumber\\
&\quad \times a_{j,k}(t_{l},\xi-\zeta, \zeta) d\zeta dy dz.
\end{align}
Next we perform integration by parts with respect to $\zeta$,   then change the order of integration over $\zeta$ and that over $y$. Combining with a change of variable $w = y-z$, we get
\begin{align}
&|T_{j}^{k}(\mathcal{F}_{-h+n}, b_{h})(\xi,t_{l})| \nonumber\\
&=\biggl|\int_{\mathbb{R}^{2}}  \int_{\mathbb{R}^{2}}e^{iw \cdot \zeta-i(  \tilde{\Phi}(t_{l},\xi -\zeta, \delta) + \tilde{\Phi}(t_{l},\zeta, \delta))\psi(2^{h-n}w)  }a_{t_{l},j,k,h, \xi}(\zeta,w)\psi(2^{h-n}w)  \hat{\mathcal{F}}_{-h+n}(\xi,\xi-w)  dw d\zeta \biggl| \nonumber\\
&\le 2^{j-\frac{h}{2}} \biggl\| \int_{\mathbb{R}^{2}}e^{i w \cdot \zeta} a_{t_{l}, j,k,h,\xi}(\zeta,w) \psi(2^{h-n}w) \hat{\mathcal{F}}^{y}_{-h+n}(\xi, \xi -w)dw \biggl\|_{L_{\zeta}^{2}(\mathbb{R}^{2})} \nonumber\\
&\le  2^{\frac{3h}{2}-n} \|\hat{\mathcal{F}}_{-h+n}(\xi, \xi - \zeta)\|_{L_{\zeta}^{2}(\mathbb{R}^{2})},
\end{align}
where we have applied Cauchy-Schwarz inequality  and  (\ref{offdianol}) to obtain the last two inequalities. Then we arrive at Proposition \ref{keylemma2} since $Card(E_{j,k}) \lesssim (c2^{d\gamma k}+1) 2^{\gamma j}$.
 $\hfill\square$

\section{A counterexample}\label{sharpresultsection}
\begin{theorem}\label{sharpresult}
Let
\begin{equation}
A_{t}f(y):= \int_{0}^{1} f(y_1-t^{a_{1}}x,y_2-t^{a_{2}}(x^d +c)) dx, \quad da_{1} \neq a_{2}.
\end{equation}
There exist some sets $E \subset [1,2]$ such that the $L^{p}\rightarrow L^{q}$ estimate of the maximal operator $\sup_{t \in E}|A_{t}|$ fails for $(\frac{1}{p}, \frac{1}{q}) \notin \overline{\widetilde{\Delta}}$ when $c \neq 0$, and for $(\frac{1}{p}, \frac{1}{q}) \notin \overline{\Delta}$ when   $c=0$.
\end{theorem}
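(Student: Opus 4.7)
The plan is to exhibit, for each boundary line of $\Delta$ (resp.\ $\widetilde{\Delta}$) meeting the interior of the unit square, an explicit pair consisting of a dilation set $E \subset [1,2]$ (realizing prescribed upper Minkowski dimension $\beta$ and Assouad dimension $\gamma$) and a test function $f$ for which the ratio $\|\sup_{t \in E}|A_t f|\|_q / \|f\|_p$ diverges along a sequence of scales $\delta \to 0$. Since $\Delta$ and $\widetilde{\Delta}$ are intersections of half-planes, each defining line is treated separately.

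The constraint $\frac{d+1}{p} - \frac{d+1}{q} < 1$, common to both $\Delta$ and $\widetilde{\Delta}$, is forced by the classical Knapp counterexample at the flat point: take $E = \{1\}$ and $f = \chi_{R_\delta}$ with $R_\delta$ a $\delta \times \delta^d$ rectangle near the origin; a direct computation gives $A_1 f(y) \gtrsim \delta$ on a translate of $R_\delta$, so $\|A_1 f\|_q \gtrsim \delta^{1+(d+1)/q}$ against $\|f\|_p \sim \delta^{(d+1)/p}$. Sharpness of the remaining defining lines of $\mathcal{R}^2(\beta,\gamma)$ follows by realizing $E$ as a Cantor-like subset of $[1,2]$ of prescribed $\beta,\gamma$ and adapting the spherical-maximal-function counterexamples of Anderson-Hughes-Roos-Seeger and Roos-Seeger --- using constant test functions on small balls, tubular thickenings of $\bigcup_{t\in E}\delta_t\gamma(\operatorname{supp}\eta)$, and Knapp-type functions localized to Assouad-dense subintervals --- to the present curve of finite type $d$.

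The new content is the two extra constraints defining $\widetilde{\Delta}$ in the $c \neq 0$ case, which exploit the moving anchor $(0, t^{a_2}c)$ through which $\delta_t\gamma$ passes. For the first extra constraint, take $E$ of upper Minkowski dimension $\beta$ and $f = \chi_{R_0}$ with $R_0$ a $\delta \times \delta^d$ cap centered at the origin. For any $t \sim 1$, $A_t f(y) \gtrsim \delta$ on the translate $R_0 + (0, t^{a_2}c)$, and two such translates are disjoint once the anchors are $\gtrsim \delta^d$-separated in $y_2$, i.e.\ once $|t - t'| \gtrsim \delta^d$ (since $t \mapsto t^{a_2}c$ is bi-Lipschitz on $[1,2]$). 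A $\delta^d$-net in $E$ has cardinality $\sim \delta^{-d\beta}$, yielding
\[
\biggl\|\sup_{t \in E}|A_t f|\biggl\|_q \gtrsim \delta \cdot (\delta^{-d\beta}\delta^{d+1})^{1/q}, \qquad \|f\|_p \sim \delta^{(d+1)/p},
\]
so that letting $\delta \to 0$ forces $\frac{1}{q} \ge \frac{d+1}{d-d\beta+1}\bigl(\frac{1}{p} - \frac{1}{d+1}\bigr)$, matching the first extra line. For the second extra constraint, use $E \cap I$ on an Assouad-dense subinterval $I$ of length $\rho$ with $N(E \cap I, \delta) \sim (\rho/\delta)^\gamma$, paired with a two-scale test function that combines the Knapp cap at the fine scale with a $\rho$-scale thickening in $y_2$ capturing the spread of the anchors over $I$; optimizing $\rho$ as a power of $\delta$ reproduces the stated line. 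The main obstacle is precisely this optimization: the interplay between the scales $\delta$ and $\rho$, the anchor separations, and the $L^p$-norm of the two-scale test function must be balanced against the cardinality bound for the disjoint supports of the $A_t f$, and only the correct choice of $\rho = \rho(\delta)$ yields the $\gamma$-dependent boundary.
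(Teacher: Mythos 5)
Your outline is correct in spirit and matches the paper's strategy for the parts it actually carries through, but it leaves a genuine gap at exactly the point you identify as ``the main obstacle.''

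What you get right: the flat-point Knapp example with a $\delta\times\delta^d$ cap at the origin does give the constraint $\frac{d+1}{p}-\frac{d+1}{q}\le 1$, and your $\delta^d$-separated anchor argument for the $\beta$-line in $\widetilde\Delta$ is essentially the paper's construction evaluated at the parameter $\theta=0$: if you set your $\delta$ equal to the paper's $\sigma=\delta^{1/d}$, your single rectangle $R_0$ is the same object as the paper's $D_{t}$, and both computations yield $1+\frac{d-d\beta+1}{q}\ge\frac{d+1}{p}$, i.e.\ (\ref{N1}). Referring the outer boundary of $\mathcal{R}^2(\beta,\gamma)$ to \cite{AHRS} and \cite{Roos} also matches the paper (via {\cite[Thm.\ 2.7]{LWZ}} for fixed $t$ and {\cite[Sec.\ 4.2]{AHRS}} for the $Q_2Q_3$ line).

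The gap is the $\gamma$-dependent line (\ref{N2}) (and its $d=2$ specialization (\ref{N4}) needed for $c=0$). You describe a ``two-scale test function'' on an Assouad-dense interval $I$ of length $\rho$ and say that ``only the correct choice of $\rho=\rho(\delta)$ yields the $\gamma$-dependent boundary,'' but you do not produce the choice or the computation. This is not a routine optimization that can be left to the reader: the whole theorem hinges on it, and the quantity that enters is not just $\gamma$ but the Assouad \emph{spectrum}. The paper resolves this with a single one-parameter family of constructions indexed by $\theta\in[0,1)$: take $I\subset[1,2]$ with $|I|=\delta^\theta$, left endpoint $r$, set $\sigma=\delta^{(1-\theta)/d}$, let $U$ be the union of $\delta\times\delta\sigma^{-(d-1)}$ slabs $T_{(r^{a_1}x,\,r^{a_2}x^d+r^{a_2})}$ along the arc $x\in(0,\sigma)$ (so $|U|\lesssim\delta\sigma$), take $f=\chi_U$, and observe that $A_t f\ge\sigma$ on a box $D_t$ of size $\delta\sigma^{-(d-1)}\times\delta$ anchored at $(0,t^{a_2}-r^{a_2})$. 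Feeding in the Assouad-spectrum lower bound $N(E\cap I,\delta)\gtrsim\delta^{-\beta+\epsilon}$ for a $(\beta,\gamma)$-regular $E$ (from {\cite[Sec.\ 5]{AHRS}}) gives the single necessary condition (\ref{N3}), which specializes to (\ref{N1}) at $\theta=0$ (where $\tilde\gamma_0=\beta$) and to (\ref{N2}) at $\theta=1-\beta/\gamma$ (where $\tilde\gamma_\theta=\gamma$). Your scheme, which treats the two lines by separate ad hoc constructions, is missing this unifying parameter and hence never produces the $\gamma$-line.

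Two smaller points. First, the theorem requires a fixed $E$ of prescribed $\beta,\gamma$; you should phrase the flat-point example as restricting $\sup_{t\in E}$ to a single $t\in E$ rather than taking $E=\{1\}$, which has the wrong dimensions. Second, you say nothing about the $c=0$ case beyond ``union of constraints,'' but the relevant line for $\Delta$ is (\ref{N4}), which the paper obtains by a change of variables and translation to a point of the curve where the curvature is bounded below, reducing to the $c\ne 0$, $d=2$ situation; that reduction has to be stated and justified (it uses that after translating to $x=1/2$ the curve locally looks like $x^2$ up to lower-order perturbation).
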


\textbf{Proof.} We first consider the case when $c \neq 0$. Without loss of generality, we may assume that $c=1$.
According to {\cite[Theorem 2.7]{LWZ}}, for fixed $t \in [1,2]$, the $L^{p}(\mathbb{R}^{2}) \rightarrow L^{q}(\mathbb{R}^{2})$ estimate of $A_{t}f$ cannot hold either $\frac{1}{q} > \frac{1}{p}$ or $\frac{2}{q} <\frac{1}{p}$.
The similar proof as in {\cite[Section 4.2]{AHRS}} implies that for any subset $E \subset [1,2]$  with $dim_{M}(E) = \beta$, the $L^{p}(\mathbb{R}^{2}) \rightarrow L^{q}(\mathbb{R}^{2})$  estimate of $\sup_{t\in E} |A_{t}f|$ may hold true only if $1 + \frac{1-\beta}{q} \ge \frac{2}{p}$. Hence, we only need to show that there exists a subset $E \subset [1,2]$  with $dim_{M}(E) = \beta$, $dim_{A}(E) = \gamma$, such that the $L^{p}(\mathbb{R}^{2}) \rightarrow L^{q}(\mathbb{R}^{2})$  estimate of $\sup_{t\in E} |A_{t}f|$ fails if
\begin{equation}\label{N1}
\frac{1}{q} < \frac{d+1}{d-d\beta+1}\biggl(\frac{1}{p} -\frac{1}{d+1} \biggl)
\end{equation}
or
\begin{equation}\label{N2}
 \frac{1}{q} < \frac{d \gamma + \beta}{2(d \gamma + \beta)-\beta(d+1+d \gamma) }\biggl( \frac{1}{p} -\frac{\beta}{d\gamma + \beta} \biggl).
 \end{equation}
Inspired by {\cite[Theorem 2]{AHRS}}, we will utilize  the $(\beta, \gamma)$-Assouad  regular set $E$ constructed in {\cite[Section 5]{AHRS}} to prove the above conclusions.

For this goal, we denote the Assouad spectrum $dim_{A,\theta}(E) = \tilde{\gamma}_{\theta}$ for $\theta \in [0,1)$. The definition and some properties of the Assouad spectrum can be found in \cite{AHRS}.
We will prove that  if
\begin{equation}\label{Assouadcondition}
dim_{A,\theta}(E) = \frac{\beta}{1-\theta},
\end{equation}
then the $L^{p}(\mathbb{R}^{2}) \rightarrow L^{q}(\mathbb{R}^{2})$  estimate of $\sup_{t\in E} |A_{t}f|$ fails if
\begin{equation}\label{N3}
\frac{1}{d} \frac{\beta}{\tilde{\gamma}_{\theta}} + \biggl( 2 - \beta- \frac{d-1}{d} \frac{\beta}{\tilde{\gamma}_{\theta}} \biggl) \frac{1}{q} < \biggl( 1 + \frac{1}{d} \frac{\beta}{\tilde{\gamma}_{\theta}} \biggl) \frac{1}{p}.
\end{equation}
According to {\cite[Lemma 5.1]{AHRS}}, the equality (\ref{Assouadcondition}) holds true when $\theta = 1- \frac{\beta}{\gamma}$ and $\tilde{\gamma}_{\theta} = \gamma$, then the inequality (\ref{N3}) is equivalent to  (\ref{N2}). Moreover, if $\theta = 0$, then  equality (\ref{Assouadcondition}) is trivial and $dim_{A,0}(E) = \beta$, inequality (\ref{N3})  coincides with  (\ref{N1}).

\begin{center}
\includegraphics[height=7cm]{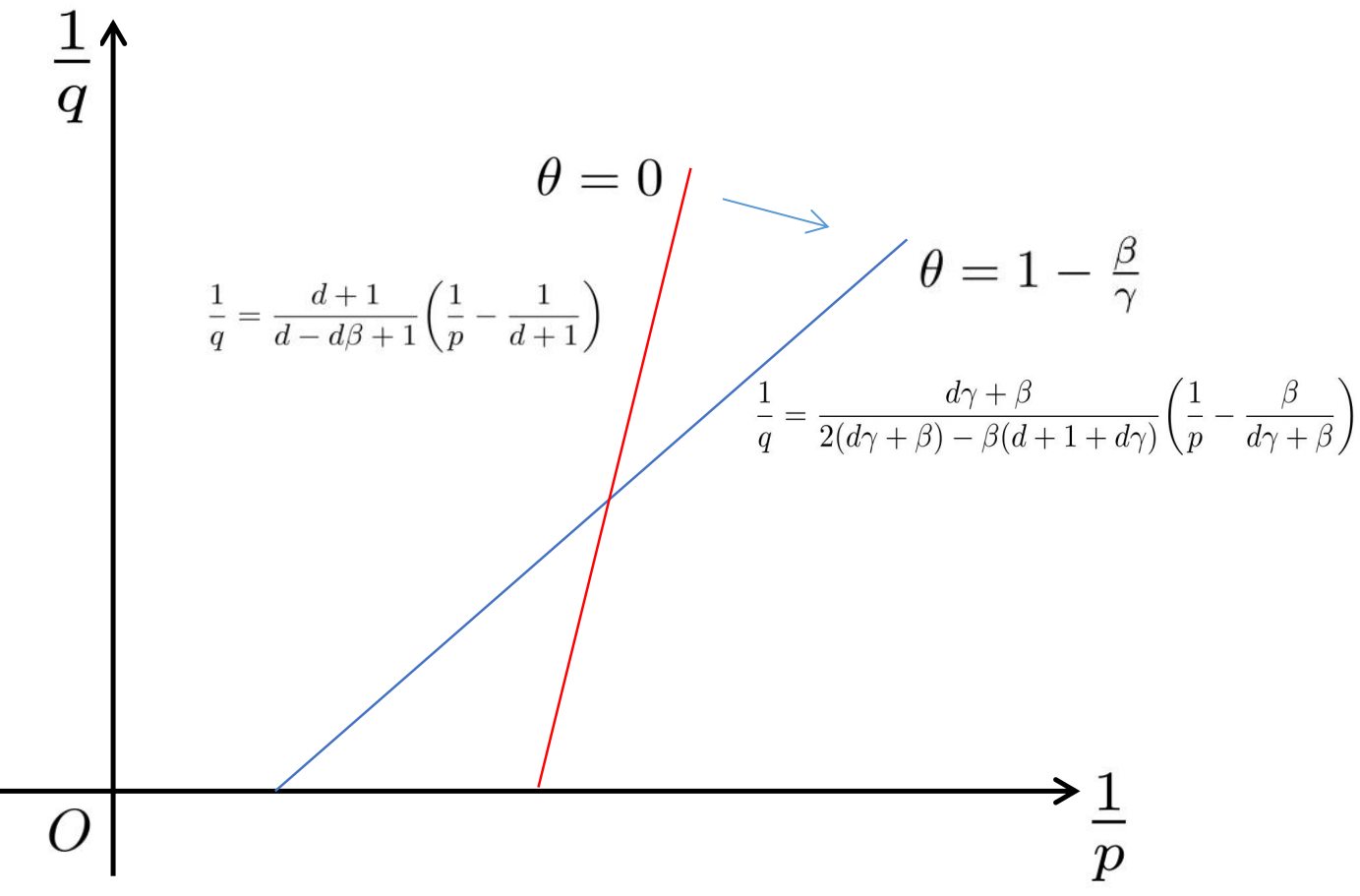}
\end{center}
\begin{center}
Figure 3. If $\theta = 0$, the inequality (\ref{N3})  coincides with  (\ref{N1}). When $\theta = 1- \frac{\beta}{\gamma}$, then the inequality (\ref{N3}) is equivalent to  (\ref{N2}).
\end{center}

Next, we will prove that the $L^{p}(\mathbb{R}^{2}) \rightarrow L^{q}(\mathbb{R}^{2})$  estimate of $\sup_{t\in E} |A_{t}f|$ fails for $(\frac{1}{p}, \frac{1}{q})$ satisfying inequality (\ref{N3}) under the assumption (\ref{Assouadcondition}). We choose an interval $I \subset [1,2]$ with $|I|=\delta^{\theta}$ for some small $\delta  \in (0,1)$ and denote its left endpoint by $r$. Let $\sigma = \delta^{\frac{1-\theta}{d}}$. For each $(r^{a_{1}}x, r^{a_{2}}x^{d}+r^{a_{2}})$, $x \in (0, \sigma)$, denote
\begin{align}
T_{(r^{a_{1}}x,r^{a_{2}}x^{d}+r^{a_{2}})}=&\{(y_{1},y_{2}): |(y_{1} + r^{a_{1}} x,y_{2} + r^{a_{2}} x^{d} + r^{a_{2}} ) \cdot e_{1}| \lesssim   \delta,\nonumber \\
& \hspace{0.2cm} \textmd{and} \hspace{0.2cm} |(y_{1}+r^{a_{1}} x,y_{2}+ r^{a_{2}} x^{d}+r^{a_{2}} ) \cdot e_{2}| \lesssim  \delta \sigma^{-(d-1)} \}, \nonumber
\end{align}
where
\[e_{1} =\frac{(-  r^{a_{2}}dx^{d-1},r^{a_{1}})}{ |(-  r^{a_{2}}dx^{d-1},r^{a_{1}})|},  \quad \quad e_{2}=\frac{(-r^{a_{1}},-  r^{a_{2}}dx^{d-1})}{ |(-r^{a_{1}},- r^{a_{2}} dx^{d-1})|}.\]
Put
\[U= \bigcup_{x \in (0,\sigma)}T_{(r^{a_{1}}x,r^{a_{2}}x^{d}+r^{a_{2}})}.\]
Then it can be observed that $|U| \lesssim \delta \sigma$, where the implied constant only depends on $d$ and $a_{1}$, $a_{2}$. Put $f = \chi_{U}$. Here $\chi$ is the characteristic function over the set $U$. Then
\begin{equation}\label{upperboundlp}
\|f\|_{L^{p}(\mathbb{R}^{2})} \lesssim (\delta \sigma)^{\frac{1}{p}}.
\end{equation}

For each $t \in E \cap I$, we denote
\[D_{t}= \{(z_{1},z_{2} + t^{a_{2}}  - r^{a_{2}} ): |z_{1}| \le  \delta \sigma^{-(d-1)}, |z_{2}| \le  \delta \}.\]
Then for each $(y_{1},y_{2}) \in D_{t}$, we claim that
\begin{equation}\label{lowerboundintegral}
\biggl| \int_{0}^{\sigma} \chi_{U}(y_{1}-t^{a_{1}} x, y_{2}-t^{a_{2}} x^{d}-t^{a_{2}} )dx \biggl| \ge \sigma.
\end{equation}
Indeed, for each $x \in (0, \sigma)$ and $(y_{1},y_{2}) \in D_{t}$, it can be proved that $(y_{1} -t^{a_{1}}x, y_{2}-t^{a_{2}}x^{d} - t^{a_{2}}) \in T_{(r^{a_{1}}x,r^{a_{2}}x^{d}+r^{a_{2}})}$. Notice that $(y_{1} -t^{a_{1}} x +r^{a_{1}} x, y_{2}-t^{a_{2}}x^{d} - t^{a_{2}}  + r^{a_{2}} x^{d} +r^{a_{2}} ) = (z_{1} + (r^{a_{1}}-t^{a_{1}} )x, z_{2} + (r^{a_{2}}-t^{a_{2}} )x^{d})$, and
\[ |(z_{1} + (r^{a_{1}} -t^{a_{1}} )x, z_{2} + (r^{a_{2}} -t^{a_{2}} )x^{d}) \cdot e_{1}| \lesssim  \delta,\]
\[ |(z_{1} + (r^{a_{1}} -t^{a_{1}} )x, z_{2} + (r^{a_{2}} -t^{a_{2}} )x^{d}) \cdot e_{2}| \lesssim \delta \sigma^{-(d-1)}. \]
Therefore, $\chi_{U} (y_{1}-t^{a_{1}} x, y_{2}-t^{a_{2}} x^{d}-t^{a_{2}} ) = 1$ for each $x \in (0, \sigma)$ and inequality (\ref{lowerboundintegral}) follows.

By inequality (\ref{lowerboundintegral}), we have the following lower bound for $\sup_{t \in E \cap I}|A_{t}f|$,
\begin{equation}
\biggl\|\sup_{t \in E \cap I}|A_{t}f| \biggl\|_{L^{q}(\mathbb{R}^{2})} \ge \biggl\|\sup_{t \in E \cap I}|A_{t}f| \biggl\|_{L^{q}(\cup_{t \in E \cap I}D_{t})} \ge \sigma |\cup_{t \in E \cap I}D_{t}|^{\frac{1}{q}}.
\end{equation}
By the definition of the Assouad spectrum, $N(E \cap I, \delta) \ge \delta^{-\beta + \epsilon}$ for any $\epsilon > 0$. Combining  the fact that  $|t^{a_{2}}-r^{a_{2}}| \sim |t-r|$ for each $t \in [1,2]$ and $r \in [1,2]$, we get
\[|\cup_{t \in E \cap I}D_{t}| \ge \delta^{-\beta + \epsilon} \delta^{2} \sigma^{-(d-1)},\]
and
\begin{equation}\label{lowerboundlq}
\biggl\|\sup_{t \in E \cap I}|A_{t}f| \biggl\|_{L^{q}(\mathbb{R}^{2})}  \ge \sigma (\delta^{2-\beta + \epsilon} \sigma^{-(d-1)})^{\frac{1}{q}}.
\end{equation}
Inequalities (\ref{lowerboundlq}) and (\ref{upperboundlp}) imply that  if the $L^{p}(\mathbb{R}^{2}) \rightarrow L^{q}(\mathbb{R}^{2})$  estimate of $\sup_{t\in E} |A_{t}f|$ holds true, then
\[\sigma (\delta^{2-\beta + \epsilon} \sigma^{-(d-1)})^{\frac{1}{q}} \lesssim (\delta \sigma)^{\frac{1}{p}},\]
which yields
\begin{equation}
\frac{1}{d} \frac{\beta}{\tilde{\gamma}_{\theta}} + \biggl( 2 - \beta- \frac{d-1}{d} \frac{\beta}{\tilde{\gamma}_{\theta}} \biggl) \frac{1}{q} \ge \biggl( 1 + \frac{1}{d} \frac{\beta}{\tilde{\gamma}_{\theta}} \biggl) \frac{1}{p}
\end{equation}
because $\epsilon$ and $\delta$ can be sufficiently small, then we have finished the proof of inequality (\ref{N3}).

When $c =0$, we consider the local maximal operator
\[\sup_{t \in E}\biggl|\int_{0}^{1} f(y_1-t^{a_{1}}x,y_2-t^{a_{2}}x^{d})dx \biggl|.\]
By changing variables $x \rightarrow t^{\frac{a_{1}-a_{2}}{d-1}}x$, it is equivalent to consider the maximal operator
\[\sup_{t \in E}|A_{t}f(y)|:=\sup_{t \in E}\biggl|\int_{0}^{1} f(y_1-t^{\frac{da_{1}-a_{2}}{d-1}}x,y_2-t^{\frac{da_{1}-a_{2}}{d-1}}x^{d})dx \biggl|.\]
We need to prove that $\sup_{t \in E} |A_{t}|$ can not be $L^{p}(\mathbb{R}^{2}) \rightarrow L^{q}(\mathbb{R}^{2})$ bounded if $(\frac{1}{p},\frac{1}{q}) \notin \overline{\Delta}$. It has been proved in  {\cite[Theorem 2.7]{LWZ}} that, for fixed $t \in [1,2]$, the $L^{p}(\mathbb{R}^{2}) \rightarrow L^{q}(\mathbb{R}^{2})$ boundedness of $A_{t}$ implies  $\frac{1}{q} \le \frac{1}{p} \le \frac{2}{q} $ and $\frac{d+1}{p}-\frac{d+1}{q}-1 \le 0$.

%We first prove that the condition $\frac{d+1}{p}-\frac{d+1}{q}-1 \le 0$ is necessary. Let $f$ be the characteristic function on the set $D:=(-10 \delta, 10 \delta) \times (-10 \delta^{d}, 10 \delta^{d})$. For each $t \in [1,2]$, $y \in R:= (-\delta, \delta) \times (-\delta^{d},\delta^{d})$ and $x \in I:=(-\delta,\delta)$, $y-t(x,x^{d}) \in D$. Hence, the $L^{p}(\mathbb{R}^{2}) \rightarrow L^{q}(\mathbb{R}^{2})$ boundedness of $\sup_{t \in E} |A_{t}|$  implies $|R|^{\frac{1}{q}}|I| \lesssim |D|^{\frac{1}{p}}$, which requires $\frac{d+1}{p}-\frac{d+1}{q}-1\le 0$ since $\delta$ can be sufficiently small.

Below, we will prove that $\sup_{t \in E} |A_{t}|$ can not be $L^{p}(\mathbb{R}^{2}) \rightarrow L^{q}(\mathbb{R}^{2})$ bounded either $1+\frac{1-\beta}{q} < \frac{2}{p}$ or
 \begin{equation}\label{N4}
 \frac{1}{q} < \frac{2 \gamma + \beta}{2(2 \gamma + \beta)-\beta(3+2 \gamma) }\biggl( \frac{1}{p} -\frac{\beta}{2\gamma + \beta} \biggl).
 \end{equation}
Applying some transformations that do not change the $L^{p}(\mathbb{R}^{2}) \rightarrow L^{q}(\mathbb{R}^{2})$ norm of the above local maximal operator, we convert our problem to considering the following local maximal operator (which is also denoted by $\sup_{t \in E}|A_{t}|$ by abuse of notation),
\[\sup_{t \in E}|A_{t}f(y)|:=\sup_{t \in E}\biggl|\int_{-\frac{1}{2}}^{\frac{1}{2}} f\bigl(y_1-t^{\frac{da_{1}-a_{2}}{d-1}}(x+\frac{1}{2}),y_2-t^{\frac{da_{1}-a_{2}}{d-1}}\bigl(g(x)+ \frac{1-d}{2^d}\bigl) \bigl)dx \biggl|,\]
where $g(x)=x^{2}\biggl(\frac{d(d-1)}{2}(\frac{1}{2})^{d-2}+ \cdot \cdot \cdot + x^{d-2} \biggl)$. When $x$ is restricted to a small neighborhood of the origin, we have $|g(x)| \sim |x|^{2}$ and $|g^{\prime}(x)| \sim |x|$. Hence, the method we applied to obtain inequality (\ref{N2}) for the local maximal operator along the curve $\{(x,x^{2}+1): x \in (0,1)\}$ remains valid for the maximal operator considered here.
 Substituting $d =2$ in inequality (\ref{N2}) yields (\ref{N4}). For the same reason, the method in {\cite[Section 4.2]{AHRS}} can also be applied here to obtain the necessity of the condition $1 + \frac{1-\beta}{q} \ge \frac{2}{p}$.   $\hfill\square$

\section{The case when $da_{1} = a_{2}$: Proof of Theorem \ref{hhisomaintheorem}}
Theorem \ref{hhisomaintheorem} can be obtained from the following three lemmas. We notice that the positive constant $\epsilon^{\prime}$ can be found in Remark \ref{localyconstant+}.
\begin{lemma}\label{mainlemma1hom}
For each $k \ge \log(1/\epsilon)$ and $0 \le j \le mk+\epsilon^{\prime} j$, there exists $\epsilon(p,q)>0$ such that
\begin{equation}\label{homE1}
\biggl\|\sup_{t \in [1,2]} |\widetilde{A_{t,j}^{k}}f| \biggl\|_{L^{q}(\mathbb{R}^{2})} \lesssim 2^{-\epsilon(p,q)j}  \|f\|_{L^{p}(\mathbb{R}^{2})}
\end{equation}
whenever $(1/p,1/q)$ satisfies $1/2p < 1/q \le 1/p$ and $1/q > 2/p-1$.
\end{lemma}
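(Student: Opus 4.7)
The plan is to split $\widetilde{A_{t,j}^{k}}=A_{t,j}^{k}+R_{t,j}^{k}$, so that the rapid-decay estimate (\ref{remainderterm}) handles the remainder and the case $j=0$ follows directly from (\ref{A1}). For $j\ge 1$ everything reduces to establishing (\ref{homE1}) for the main term $A_{t,j}^{k}$, which I would do by interpolating among three endpoint estimates chosen so that the region $\{1/(2p)<1/q\le 1/p,\;1/q>2/p-1\}$ becomes a genuine interior.

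The first step is to upgrade (\ref{L2}) to an $L^{2}\to L^{2}$ maximal bound with a genuine decay factor. The crucial feature of the case $da_{1}=a_{2}$ is that the leading term $(d-1)\xi_{2}(-\xi_{1}/(d\xi_{2}))^{d/(d-1)}$ in the phase (\ref{phasefunction2}) is independent of $t$; consequently $\partial_{t}\tilde{\Phi}(t,\xi,\delta)$ is driven only by the perturbation $t^{-a_{1}m}\xi_{2}\delta^{m}\phi^{(m)}(0)(-\xi_{1}/(d\xi_{2}))^{(d+m)/(d-1)}/m!$ and by $\partial_{t}R(t,\xi,\delta,d)$, each of size $O(2^{j-mk})$ on the support of $\psi(2^{-j}|\delta_{t}\xi|)$. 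Combined with the stationary-phase amplitude bound $\lesssim 2^{-j/2}$ and Plancherel this gives
\[\|\partial_{t}A_{t,j}^{k}f\|_{L^{2}(\mathbb{R}^{2})}\lesssim (2^{j-mk}+1)\,2^{-j/2}\|f\|_{L^{2}(\mathbb{R}^{2})}.\]
Plugging this and (\ref{L2}) into the elementary inequality $\sup_{t\in[1,2]}|F(t)|^{2}\le|F(1)|^{2}+2\int_{1}^{2}|F(t)||F'(t)|\,dt$ followed by Cauchy-Schwarz yields
\[\biggl\|\sup_{t\in[1,2]}|A_{t,j}^{k}f|\biggl\|_{L^{2}(\mathbb{R}^{2})}^{2}\lesssim (2^{-j}+2^{-mk})\|f\|_{L^{2}(\mathbb{R}^{2})}^{2},\]
and in the regime $j\le mk+\epsilon^{\prime}j$ one has $2^{-mk}\le 2^{-(1-\epsilon^{\prime})j}$, so the $L^{2}\to L^{2}$ bound carries the decay factor $2^{-(1-\epsilon^{\prime})j/2}$.

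The second endpoint is $L^{\infty}\to L^{\infty}$: (\ref{Linfty}) holds uniformly in $t$ and gives $\|\sup_{t}|A_{t,j}^{k}f|\|_{L^{\infty}}\lesssim\|f\|_{L^{\infty}}$ at $(0,0)$ with no decay. The third endpoint is the Iosevich-Sawyer corner $(2/3,1/3)$: the $L^{3/2}\to L^{3}$ estimate for $\sup_{t\in[1,2]}|\widetilde{A_{t,j}^{k}}f|$ for our class of finite type curves is exactly what is treated in \cite{LWZ}, with a constant whose growth in $j$ is smaller than the $L^{2}\to L^{2}$ decay above. In the present regime the perturbation factor $\delta^{m}=2^{-mk}$ is small enough to be absorbed into the amplitude without altering that vertex estimate. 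Riesz-Thorin interpolation among these three endpoints then delivers the decay $2^{-\epsilon(p,q)j}$ uniformly for $(1/p,1/q)$ in the stated open region.

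The main obstacle is precisely the corner $(2/3,1/3)$: since $\partial_{t}\tilde{\Phi}$ is only of size $2^{j-mk}\lesssim 2^{\epsilon^{\prime}j}$, Fourier orthogonality in $t$ of the type used in the proof of Proposition \ref{keylemma1} is simply not available in this regime, and one must import the Iosevich-Sawyer convexity argument from \cite{LWZ}. This import is legitimate because $|x^{d}\phi(x)|\sim|x|^{d}$ near the origin, so the geometric curvature estimates underlying that endpoint survive the smooth non-vanishing perturbation $\phi$, and the small coefficient $\delta^{m}$ can be carried through the convexity analysis as a harmless amplitude factor.
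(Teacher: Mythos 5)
Your three interpolation endpoints are the points $(0,0)$, $(1/2,1/2)$, and $(2/3,1/3)$ in the $(1/p,1/q)$-plane. Their convex hull is a proper subset of the target region, which is the interior of the triangle with vertices $(0,0)$, $(1,1)$, $(2/3,1/3)$ together with the open diagonal segment. Points such as $(3/4,3/4)$ (and anything closer to $(1,1)$ than to $(1/2,1/2)$) satisfy $1/2p<1/q\le 1/p$ and $1/q>2/p-1$ but lie outside the hull of your three endpoints, so Riesz--Thorin interpolation among them cannot reach these exponents. This is a genuine gap: you need a fourth estimate near the corner $(1,1)$. The paper supplies it by applying the same Sobolev-in-$t$ argument you use for $L^2\to L^2$ to the single-scale bound \eqref{L1}, obtaining $\|\sup_{t\in[1,2]}|A_{t,j}^k f|\|_{L^1}\lesssim 2^{\epsilon'j}\|f\|_{L^1}$; since $\epsilon'$ is small this mild growth interpolates against the $L^2$ decay to yield decay along the whole open diagonal and hence on the full target region.

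As a secondary remark, your third endpoint is overbuilt. The paper does not import an $L^{3/2}\to L^3$ result from \cite{LWZ}; it simply uses the elementary kernel bound \eqref{Linfty1}, namely $\|\sup_{t\in[1,2]}|A_{t,j}^k f|\|_{L^\infty}\lesssim 2^j\|f\|_{L^1}$, at the corner $(1,0)$. Interpolating the four simple estimates at $(0,0)$, $(1,1)$, $(1/2,1/2)$, $(1,0)$ already gives decay $2^{-\epsilon(p,q)j}$ at every interior point of the target triangle (choosing $\epsilon'$ small relative to the distance of $(1/p,1/q)$ from the excluded edges), so the import you are worried about justifying is not needed. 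Your derivation of the $L^2\to L^2$ decay factor via the $t$-derivative bound and the observation that the leading phase term is $t$-independent when $da_1=a_2$ are correct and match the paper's mechanism, and you are right that orthogonality of the type used in Proposition \ref{keylemma1} is unavailable in the regime $j\le mk+\epsilon'j$; but the fix the paper adopts is the cheaper $L^1\to L^1$ and $L^1\to L^\infty$ pair, not an imported $(2/3,1/3)$ vertex bound.
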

\begin{lemma}\label{mainlemma2hom}
For $1 \le i \le 3$, $k \ge \log(1/\epsilon)$ and $j > mk + \epsilon^{\prime} j$, there holds
\begin{equation}\label{homE2}
\biggl\|\sup_{t \in E} |\widetilde{A_{t,j}^{k}}f| \biggl\|_{L^{q_{i}}(\mathbb{R}^{2})} \lesssim 2^{-\frac{mk\beta}{q_{i}}}  \|f\|_{L^{p_{i}}(\mathbb{R}^{2})}.
\end{equation}
What's more, there exists $\epsilon_{1}>0$ which depends on $\beta$ such that
\begin{equation}\label{homE3}
\biggl\|\sup_{t \in E} |\widetilde{A_{t,j}^{k}}f| \biggl\|_{L^{2}(\mathbb{R}^{2})} \lesssim 2^{-\frac{mk\beta}{2}-\epsilon_{1}j}  \|f\|_{L^{2}(\mathbb{R}^{2})}.
\end{equation}
\end{lemma}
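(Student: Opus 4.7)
The plan is to parallel the strategy of Lemma \ref{basicestimate} but exploit the sharper locally constant property furnished by Remark \ref{localyconstant+}. Since $c=0$, $da_1=a_2$, and $j>mk+\epsilon' j$, that remark tells us that $|\widetilde{A_{t,j}^{k}}f|$ behaves as a constant on any $t$-interval $I\subset[1,2]$ of length at most $2^{-j+mk}$, with off-diagonal contributions decaying as $(1+2^{j-mk}\mathrm{dist}(s,I))^{-N}$ plus a harmless error $2^{-jN}\|f\|_{L^{p}}$. To cover $E$, one needs only $N(E,2^{-j+mk})\lesssim 2^{(j-mk)\beta}$ such intervals by assumption (\ref{defofbeta}). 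I would combine this with the fixed-$t$ bounds (\ref{L2})--(\ref{Linfty1}) for $A_{t,j}^{k}$; the remainder $R_{t,j}^{k}$ being negligible by (\ref{remainderterm}) contributes nothing.

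From here I would derive four elementary estimates for $\sup_{t\in E}|\widetilde{A_{t,j}^{k}}f|$. The $L^{\infty}\to L^{\infty}$ bound is immediate from (\ref{Linfty}), already proving (\ref{homE2}) for $i=1$. The $L^{1}\to L^{\infty}$ bound $\lesssim 2^{j}\|f\|_{L^{1}}$ follows directly from (\ref{Linfty1}), since taking the pointwise sup in $t$ does not inflate the $L^{\infty}$ norm. For $L^{1}\to L^{1}$, I would write $\sup_{t\in E}|\widetilde{A_{t,j}^{k}}f|\le\sum_{l}\sup_{t\in E\cap I_{l}}|\widetilde{A_{t,j}^{k}}f|$ over the covering intervals $I_{l}$, invoke Remark \ref{localyconstant+} with (\ref{L1}) on each piece, and sum, which produces the factor $N(E,2^{-j+mk})\lesssim 2^{(j-mk)\beta}$. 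For $L^{2}\to L^{2}$, I would exploit the $\ell^{2}$-type bound $(\sup_{t\in E}g(t))^{2}\le\sum_{l}(\sup_{t\in E\cap I_{l}}g(t))^{2}$, apply Remark \ref{localyconstant+} together with (\ref{L2}) on each $I_{l}$, and obtain
\begin{equation*}
\biggl\|\sup_{t\in E}|\widetilde{A_{t,j}^{k}}f|\biggr\|_{L^{2}}\lesssim 2^{(j-mk)\beta/2}\,2^{-j/2}\|f\|_{L^{2}}=2^{-mk\beta/2}\,2^{-(1-\beta)j/2}\|f\|_{L^{2}},
\end{equation*}
which is exactly (\ref{homE3}) with $\epsilon_{1}=(1-\beta)/2>0$.

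Riesz--Thorin interpolation then closes (\ref{homE2}) for $i=2,3$. Interpolating the $L^{1}\to L^{1}$ bound (exponent $(j-mk)\beta$) against the above $L^{2}\to L^{2}$ bound at weight $\theta=2\beta/(1+\beta)$ yields the $L^{1+\beta}\to L^{1+\beta}$ estimate with constant $2^{-mk\beta/(1+\beta)}$, the $j$-exponents cancelling by design. Similarly, interpolating the $L^{1}\to L^{\infty}$ bound $2^{j}$ against the $L^{2}\to L^{2}$ bound at weight $\theta=2/(3-\beta)$ produces the required $L^{p_{3}}\to L^{q_{3}}$ bound $2^{-mk\beta/(3-\beta)}$, again with exact cancellation of the $j$-dependence.

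The only delicate point is verifying that the $j$-exponents really do cancel in both interpolations, and this is what forces and justifies the sharp choice $\epsilon_{1}=(1-\beta)/2$; any weaker $L^{2}$-decay would leave a residual power of $2^{j}$ and destroy (\ref{homE2}). Conceptually the main structural input is that $da_{1}=a_{2}$ enlarges the locally constant $t$-interval from $2^{-j}$ to $2^{-j+mk}$, which is precisely the mechanism converting the $t$-integration economy into the $2^{-mk\beta}$ savings appearing in the prefactor.
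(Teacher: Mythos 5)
Your proposal matches the paper's proof in all essentials: you invoke Remark \ref{localyconstant+} to promote the fixed-$t$ estimates (\ref{L2}), (\ref{L1}), (\ref{Linfty}), (\ref{Linfty1}) (plus (\ref{remainderterm})) to estimates uniform on each interval $I$ of length $2^{-j+mk}$, cover $E$ by $N(E,2^{-j+mk})\lesssim 2^{(j-mk)\beta}$ such intervals via (\ref{defofbeta}) with the appropriate $\ell^{q}$-summation, and then interpolate, exactly as the paper does. Your explicit weight computations and the observation about the exact $j$-cancellation at $(p_2,q_2)$ and $(p_3,q_3)$ are correct and simply make explicit what the paper leaves implicit.
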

\begin{lemma}\label{mainlemma3hom}
When $\gamma \in (0,1)$,  for each $k \ge \log(1/\epsilon)$, $j > mk+\epsilon^{\prime} j$, there holds
\begin{equation}\label{homE4}
\biggl\|\sup_{t\in E}|\widetilde{A_{t,j}^{k}}f |\biggl\|_{L^{q_{4}}({\mathbb{R}}^2)} \lesssim_{\gamma} j \|f\|_{L^{p_{4}}({\mathbb{R}}^2)},
\end{equation}
where the implied constant depends on $\gamma$, but does not depend on $j$ and $k$.
\end{lemma}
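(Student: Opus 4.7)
The plan is to mirror the proof of Lemma \ref{mainlemma3} while accounting for two novel features of the homogeneous case: the phase (\ref{phasefunction2}), whose leading term is $t$-independent, and the modified locally constant scale $2^{-j+mk}$ from Remark \ref{localyconstant+}. First, I would discretize the supremum over $t \in E$ at scale $2^{-j+mk}$ (rather than $2^{-j}$), introducing $E_{j,k} = \{t_l\}$ as the left endpoints of $2^{-j+mk}$-intervals meeting $E$, so that $\#E_{j,k} \lesssim 2^{\beta(j-mk)}$; the resulting error term is absorbed by Remark \ref{localyconstant+}. After this reduction the goal becomes
\[
\|A_{t_l,j}^{k} f\|_{L^{3+2\gamma}(\mathbb{R}^2 \times E_{j,k})} \lesssim_\gamma j \|f\|_{L^{(3+2\gamma)/2}(\mathbb{R}^2)}.
\]

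Next I would pass to the bilinear operator $F_{j}^{k}(f \otimes f)(y,t_l)$ defined exactly as in Section \ref{proofofmainlemma3} but with $\tilde{\Phi}$ taken from (\ref{phasefunction2}), and reduce via the analytic interpolation in the proof of Lemma \ref{mainlemma3} to the two endpoint bounds
\[
\|F_{j}^{k}(f \otimes f)\|_{L^\infty(\mathbb{R}^2 \times E_{j,k})} \lesssim 2^{j} \int_{\mathbb{R}^2}\sup_{(w_2,z_2)}|f(w_1,w_2)||f(z_1,z_2)|\, dw_1 dz_1
\]
and
\[
\|F_{j}^{k}(f \otimes f)\|_{L^{2}(\mathbb{R}^2 \times E_{j,k})} \lesssim_\gamma j\, 2^{j} \biggl(\int\!\!\int |f(w)|^2|f(z)|^2|w-z|^{-(2\gamma-1)}\,dw\,dz\biggl)^{1/2}.
\]
The $L^\infty$ bound is insensitive to the $t$-dependence of the phase, so its proof transfers verbatim from Lemma \ref{bilinear3lemma1}.

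The $L^2$ bound is the main point. I would decompose $b=\sum_h b_h$ and $\mathcal{F}=\sum_n\mathcal{F}_{-h+n}$ dyadically and establish analogues of Propositions \ref{keylemma1} and \ref{keylemma2}. The analogue of Proposition \ref{keylemma2} involves no differencing in $t$ and carries over unchanged. For the diagonal piece, the crucial observation from (\ref{phasefunction2}) is that the principal term is $t$-independent, so only the subleading term with factor $\delta^m=2^{-mk}$ contributes to $\tilde{\Phi}(t_{l'},\cdot)-\tilde{\Phi}(t_l,\cdot)$. After the change of variable $\zeta=B\eta$ and a Taylor expansion parallel to the computation for Proposition \ref{keylemma1}, one finds
\[
|\nabla_\eta \Phi_{h,\xi,\delta}(t_l,t_{l'},\eta)| \sim 2^{j-2h-mk}|t_l-t_{l'}|,
\]
and integration by parts yields the kernel estimate
\[
|K(t_l,t_{l'},\xi)| \lesssim 2^{2j-h}\bigl(1+2^{j-2h-mk}|t_l-t_{l'}|\bigr)^{-N}.
\]
Thus the effective orthogonality scale is $2^{2h-j+mk}$, precisely the dilation of the locally constant scale.

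To finish the $L^2$ bound I would decompose $E_{j,k}=\bigsqcup_{s=1}^{M}E_{j,k,s}$ with $E_{j,k,s}-E_{j,k,s}\subset 2^{-j+2h+mk}\mathbb{Z}$. Applying the $\gamma$-Assouad condition at scale $2^{-j+mk}$ inside an interval of length $2^{-j+2h+mk}$ gives $M \lesssim 2^{2\gamma h}$; the factor $2^{-mk}$ cancels between numerator and denominator, so this count is independent of $k$. Combining this with the kernel bound and the Cauchy--Schwarz argument from Proposition \ref{keylemma1} yields
\[
\|T_{j}^{k}(\mathcal{F}_{-h},b_h)\|_{L^{2}} \lesssim 2^{j-h/2}2^{\gamma h}\|\mathcal{F}_{-h}\|_{L^{2}},
\]
and summing over $h$ and $n$ as at the end of Section \ref{proofofmainlemma3} produces the logarithmic loss $j$. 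The main obstacle is bookkeeping the $2^{-mk}$ through the phase expansion and verifying that the discretization scale $2^{-j+mk}$ from Remark \ref{localyconstant+} exactly matches the new orthogonality scale, so that the final count of equivalence classes remains $2^{2\gamma h}$; this is the precise mechanism by which the $2^{-mk}$ is annihilated and no $k$-dependent factor appears on the right-hand side of (\ref{homE4}).
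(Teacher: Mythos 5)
Your proposal is correct and mirrors the paper's own argument almost exactly: discretize $E$ at scale $2^{-j+mk}$ via Remark \ref{localyconstant+}, pass to the bilinear form with the phase (\ref{phasefunction2}) whose leading term is $t$-independent, and establish analogues of Propositions \ref{keylemma1} and \ref{keylemma2} in which the orthogonality scale $2^{-j+mk+2h}$ matches the discretization so that the $2^{-mk}$ cancels and the class count is $\lesssim 2^{2\gamma h}$ independently of $k$. The one detail you leave implicit is that the dyadic decomposition of $b$ must stop at $h=(j-mk)/2$ rather than $j/2$ (so that the coarse scale $2^{-j+2h+mk}$ stays $\le 1$, and the diagonal case reduces to Cauchy--Schwarz and Plancherel), which is precisely what the paper does and is consistent with your orthogonality scale.
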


%We decompose $A_{t}$ (which is given by (\ref{averagefinite3})) as at the beginning of Section \ref{istropic} and define $\widetilde{A_{t,j}^{k}}$, $\widetilde{\mathcal{M}^{0,k}}$ and $\widetilde{\mathcal{M}_{j}^{k}}$.
%According to Remark \ref{localyconstant+}, we consider $j > mk + \epsilon j$ and $j \le mk + j\epsilon$, respectively. Similar with the proof of estimates (E1-E2) in {\cite[Theorem 1.4]{LWZ}}, when $j \le mk + j\epsilon$, we can get
%whenever $(1/p,1/q)$ satisfies $1/2p < 1/q \le 1/p$ and $1/q > 2/p-1$.

We briefly explain how to prove Theorem \ref{hhisomaintheorem} by Lemmas \ref{mainlemma1hom}-\ref{mainlemma3hom}. By interpolation (\ref{homE3}) with (\ref{homE2})  for $1 \le i \le 3$, respectively, and combining the inequality (\ref{homE4}),  there exists  a constant $\epsilon(p,q)>0$ such that
\[\biggl\|\sup_{t\in E} |\widetilde {A_{t,j}^{k}}f|\biggl\|_{L^{q}(\mathbb{R}^{2})} \lesssim 2^{-\epsilon(p,q)j}\|f\|_{L^{p}(\mathbb{R}^{2})} \]
holds whenever $j>mk+\epsilon^{\prime} j$ and $(\frac{1}{p},\frac{1}{q}) \in \mathcal{R}^{2}(\beta, \gamma)\setminus \{(0,0)\}$.
Then, combining the results from Lemma \ref{mainlemma1hom} and isometric transformation, we have
\begin{align*}
\biggl\|\sup_{t\in E}|A_{t,j}^{k}f|\biggl\|_{L^{q}(\mathbb{R}^{2})}
&\le \sum_{k \ge \log(1/\epsilon)} \sum_{j \ge 0} 2^{(d+1)(\frac{1}{p}-\frac{1}{q})k-k} \biggl\|\sup_{t \in [1,2]} |\widetilde{A_{t,j}^{k}}f| \biggl\|_{L^{q}(\mathbb{R}^{2})}\nonumber\\
&\le \sum_{k \ge \log(1/\epsilon)} \sum_{j \ge 0} 2^{(d+1)(\frac{1}{p}-\frac{1}{q})k-k} 2^{-\epsilon(p,q)j} \|f\|_{L^{p}(\mathbb{R}^{2})} \lesssim \|f\|_{L^{p}(\mathbb{R}^{2})}
\end{align*}
provided that $\frac{d+1}{p}-\frac{d+1}{q} - 1<0$, which implies Theorem \ref{hhisomaintheorem}. Next, we will prove Lemma \ref{mainlemma1hom} and Lemma \ref{mainlemma2hom} in Subsection \ref{proofofhom12}, and Lemma \ref{mainlemma3hom} in Subsection \ref{proofofhom3}.

%Next we concentrate on the case when $j > mk + j\epsilon$. Combining with the proof of {\cite[Theorem 1.4]{LWZ}} and the argument in Subsection \ref{proofofmainlemma1}, it is easy to prove that $\widetilde{\mathcal{M}^{0,k}}$ and $\widetilde{\mathcal{M}_{j}^{k}}$ satisfy Lemma \ref{basicestimate} (with $c=0$). Comparing with the proof of Theorem \ref{maintheorem1}, the main difference lies at the point $(\frac{1}{p_4},\frac{1}{q_4})=(\frac{2}{3+2\gamma}, \frac{1}{3+2\gamma})$. We show some details for the proof of the model case when $d=2$, $a_{1} = 1$ and $a_{2} = 2$, since the main idea can be applied to more general case.
\subsection{Proof of Lemma \ref{mainlemma1hom} and Lemma \ref{mainlemma2hom}}\label{proofofhom12}
We first explain how to prove Lemma \ref{mainlemma1hom}. When $j =0$, Lemma \ref{mainlemma1hom} follows directly from
Soboelev's embedding and inequality (\ref{A1}). Since the operator $R_{t,j}^{k}$ satisfies (\ref{remainderterm}), we only need to prove Lemma \ref{mainlemma1hom} for the operator $A_{t,j}^{k}$ when $j \ge 1$. Noting that $j \le mk + \epsilon^{\prime} j$ in Lemma  \ref{mainlemma1hom}, Sobolev's embedding  and inequality (\ref{L2}) imply the validity of
\[\biggl\|\sup_{t \in [1,2]}|A_{t,j}^{k}f| \biggl\|_{L^{2}(\mathbb{R}^{2})} \lesssim 2^{-\frac{j}{2} + \epsilon^{\prime} j} \|f\|_{L^{2}(\mathbb{R}^{2})}.\]
Using the same method, we can derive
 \[\biggl\|\sup_{t \in [1,2]}|A_{t,j}^{k}f| \biggl\|_{L^{1}(\mathbb{R}^{2})} \lesssim 2^{ \epsilon^{\prime} j} \|f\|_{L^{1}(\mathbb{R}^{2})}\]
 from inequality (\ref{L1}). Based on inequalities (\ref{Linfty}) and (\ref{Linfty1}), we can establish the following estimates
  \[\biggl\|\sup_{t \in [1,2]}|A_{t,j}^{k}f| \biggl\|_{L^{\infty}(\mathbb{R}^{2})} \lesssim  \|f\|_{L^{\infty}(\mathbb{R}^{2})}\]
 and .
  \[\biggl\|\sup_{t \in [1,2]}|A_{t,j}^{k}f| \biggl\|_{L^{\infty}(\mathbb{R}^{2})} \lesssim  2^{j} \|f\|_{L^{1}(\mathbb{R}^{2})},\]
 respectively. Thus, Lemma \ref{mainlemma1hom} can be obtained by interpolation.

Next, we will provide a brief explanation of the proof for Lemma \ref{mainlemma2hom}. By selecting any interval $I$ of length $2^{-j+mk}$ that is contained in $[1,2]$, according to inequality (\ref{L2}), inequality (\ref{remainderterm}), and Remark \ref{localyconstant+}, we have
\begin{align}
\biggl\| \sup_{t \in I}|\widetilde{A_{t,j}^{k}}| \biggl\|_{L^{2}(\mathbb{R}^{2})} &\lesssim_{N}  2^{j-mk}  \int_{1/2}^{4} \frac{  \|\widetilde{A_{s,j}^{k}}f  \|_{L^{2}(\mathbb{R}^{2})} }{(1+ 2^{j-mk} dist(s,I))^{N}}ds
 +  2^{-Nj} \|f\|_{L^{2}(\mathbb{R}^{2})} \nonumber\\
&\lesssim_{N} 2^{-\frac{j}{2}}2^{j-mk} \int_{1/2}^{4}  (1+ 2^{j-mk} dist(s,I))^{-N}ds  \|f  \|_{L^{2}(\mathbb{R}^{2})} + 2^{-Nj} \|f\|_{L^{2}(\mathbb{R}^{2})} \nonumber\\
&\lesssim_{N} 2^{-\frac{j}{2}} \|f\|_{L^{2}(\mathbb{R}^{2})}. \nonumber
 \end{align}
When combined  with (\ref{defofbeta}), we obtain
\[\biggl\|\sup_{t\in E}|\widetilde{A_{t,j}^{k}}f| \biggl\|_{L^{2}({\mathbb{R}}^2)}\lesssim 2^{-\frac{mk \beta}{2}+\frac{\beta j}{2}} 2^{-\frac{j}{2}}\|f\|_{L^2({\mathbb{R}}^2)}.\]
Then (\ref{homE2}) follows since $\beta \in (0,1)$. Using a similar proof, we can derive  from inequality (\ref{L1}) that
\[\biggl\|\sup_{t\in E}|\widetilde{A_{t,j}^{k}}f| \biggl\|_{L^{1}({\mathbb{R}}^2)}\lesssim  2^{-mk\beta+\beta j}\|f\|_{L^1({\mathbb{R}}^2)}.\]

Furthermore, inequalities (\ref{Linfty}) and (\ref{Linfty1}) imply
\[
\biggl\|\sup_{t\in E}|\widetilde{A_{t,j}^{k}}f| \biggl\|_{L^{\infty}({\mathbb{R}}^2)}\lesssim \|f\|_{L^\infty({\mathbb{R}}^2)},
\]
and
\[
\biggl\|\sup_{t\in E}|\widetilde{A_{t,j}^{k}}f| \biggl\|_{L^{\infty}({\mathbb{R}}^2)}\lesssim 2^{j}\|f\|_{L^1({\mathbb{R}}^2)},
\]
respectively.
Finally, we can obtain inequality (\ref{homE2}) through interpolation among the above results.

\subsection{Proof of Lemma \ref{mainlemma3hom}}\label{proofofhom3}
%The dominating contribution term is given by
%\begin{equation}\label{mainterm3}
%A_{t,j}^kf(y):=\frac{1}{(2\pi)^2}\int_{{\mathbb{R}}^2}e^{i(\xi \cdot
%y-t^2\xi_2\tilde{\Phi}(s,\delta))}\chi_{k,m}(\frac{\xi_1}{t\xi_2})
%\frac{A_{k,m}(\delta_{t} \xi)}{(1+|\delta_t\xi|)^{1/2}}\beta(2^{-j}|\delta_t \xi|)\hat{f}(\xi)d\xi,
%\end{equation}
%where  $\chi_{k,m}$ is a smooth function supported in $[c_{k,m},\widetilde{c_{k,m}}]$, for certain non-zero constants $c_{k,m}$ and $\widetilde{c_{k,m}}$ dependent only on $k$ and $m$. $A_{k,m}$ is a symbol of order zero in $\xi$ and $\{A_{k,m}(\delta_t\xi)\}_{k,m}$ is contained in a bounded subset of symbol of order zero.
%In the phase function
%\begin{equation}\label{Phase}
%-t^2\xi_2\tilde{\Phi}(s,\delta)=\frac{\xi_1^2}{2\xi_2}+(-1)^{m+1}\frac{{\phi}^{(m)}(0)}{m!}\delta^{m}\frac{\xi_1^{m+2}}{t^m\xi_2^{m+1}} + R(t,\xi,\delta),
%\end{equation}
%where $\delta = 2^{-k}$, $R(t,\xi,\delta)$ is homogeneous of degree one in $\xi$.
%Note that $-t^2\xi_2\tilde{\Phi}(s,\delta)$ can be considered as a small perturbation of  \[\frac{\xi_1^2}{2\xi_2}+(-1)^{m+1}\frac{{\phi}^{(m)}(0)}{m!}\delta^{m}\frac{\xi_1^{m+2}}{t^m\xi_2^{m+1}}. \]

For each $j$, $k$ with $j > mk + \epsilon^{\prime} j$, let $\mathcal{I}_{j,k}(E)$ denote the collection of intervals of the form $[l2^{-j+mk}, (l+1)2^{-j+mk}]$, $l \in \mathbb{Z}$, which intersect $E$. Let $\widetilde{E}_{j,k} = \{t_{l}\}_{l}$ be the left endpoints of the intervals in $\mathcal{I}_{j,k}(E)$.
%\begin{lemma}
%When $\gamma \in (0,1)$,  for each $j \ge mk+\epsilon j$, there holds
%\begin{equation}\label{weakestimate}
%\|A_{t_{l},j}^{k}f \|_{L^{3+2\gamma}({\mathbb{R}}^2 \times E_{j,k})} \lesssim_{\gamma} j \|f\|_{L^{\frac{3+2\gamma}{2}}({\mathbb{R}}^2)},
%\end{equation}
%where the implied constant depends on $\gamma$, but never depends on $j$ and $k$.
%\end{lemma}
As in the proof of Lemma \ref{smallgamma}, the case $\gamma \in (0,1/2)$ can be proved by interpolation between
\begin{equation}\label{smallgamma2++}
\|A_{t_{l},j}^{k}f \|_{L^{\infty}({\mathbb{R}}^2 \times \widetilde{E}_{j,k})} \lesssim    2^{j} \|f\|_{L^1({\mathbb{R}}^2)},
\end{equation}
and
\begin{equation}\label{smallgamma1++}
\|A_{t_{l},j}^{k}f \|_{L^{2(1+2\gamma), \infty}({\mathbb{R}}^2 \times \widetilde{E}_{j,k})} \lesssim    2^{-\frac{1-2\gamma}{2(1+2\gamma)}j} \|f\|_{L^2({\mathbb{R}}^2)}.
\end{equation}
 Inequality (\ref{smallgamma2++}) holds because inequality (\ref{Linfty1}) remains valid for each $t \in [1,2]$, and inequality (\ref{smallgamma1++}) follows from Theorem  \ref{smallgamma+}, which can be found in the appendix of this article. Next we  state the proof for $\gamma \in [1/2,1)$.

When $a_{1}=1$, $a_{2}=d$, the phase function in the operator $A_{t,j}^{k}$ becomes
 \[-\tilde{\Phi}(t,\xi,\delta)=(d-1) \xi_{2} \biggl(-\frac{\xi_{1}}{d\xi_{2}}\biggl)^{\frac{d}{d-1}}-t^{-m} \xi_{2}\frac{\delta^m\phi^{(m)}(0)}{ m!}
\biggl(-\frac{\xi_1}{d\xi_2}\biggl)^{\frac{d+m}{d-1}}+ R(t,\xi,\delta,d),\]
 with the leading term involving $t$ being $t^{-m} \xi_{2}\frac{\delta^m\phi^{(m)}(0)}{ m!}
\biggl(-\frac{\xi_1}{d\xi_2}\biggl)^{\frac{d+m}{d-1}}$. The following analysis related to the phase function
%and the factor $\delta^{m}$  will affect the arguments on the orthogonality of the Fourier integral operator as concluded in Proposition \ref{keylemma1}. In %addition, the calculations in Proposition \ref{keylemma1} and Proposition \ref{keylemma2}
depends on the structure of each term in the phase function, which is represented as $c_{\alpha}\xi_{2}(-\frac{\xi_{1}}{\xi_{2}})^{\alpha}$, regardless of the specific value of $\alpha$.
%regardless of the specific value of the index $a$.
Therefore, in order to clarify what occurs when $da_{1}=a_{2}$, we will take $d=2$ in the following to illustrate the proof.

Denote
\begin{align}
F_{j}^{k}(f\otimes f)(y,t_{l}) &=  \int_{\mathbb{R}^{2}} \int_{\mathbb{R}^{2}}e^{i[y \cdot (\xi + \zeta) + \frac{\xi_1^2}{2\xi_2} + \frac{\zeta_1^2}{2\zeta_2}+(-1)^{m+1}\frac{{\phi}^{(m)}(0)}{m!}\delta^{m}( \frac{\xi_1^{m+2}}{t_{l}^m\xi_2^{m+1}} +\frac{\zeta_1^{m+2}}{t_{l}^m\zeta_2^{m+1} }) + R(t_{l},\xi,\zeta, \delta)]} \nonumber\\
&\quad \quad \times a_{j}(t_{l},\xi, \zeta) \hat{f}(\xi)\hat{f}(\zeta)d\xi d\zeta, \nonumber
\end{align}
where $a_{j}(t_{l}, \xi, \zeta) =2^{j}\chi_{k,m}(\frac{\xi_1}{t_{l}\xi_2})
\frac{A_{k,m}(\delta_{t_{l}}\xi)}{(1+|\delta_{t_{l}}\xi|)^{1/2}}\psi(2^{-j}|\delta_{t_{l}} \xi|) \chi_{k,m}(\frac{\zeta_1}{t_{l}\zeta_2})
\frac{A_{k,m}(\delta_{t_{l}}\zeta)}{(1+|\delta_{t_{l}}\zeta|)^{1/2}}\psi(2^{-j}|\delta_{t_{l}} \zeta|)$. By the same argument as in the proof of Lemma \ref{smallgamma}, we only need to prove the following estimate
\begin{equation}\label{blinear3+}
\|F_{j}^{k}(f\otimes f)(y,t_{l})\|_{L^{ \frac{3+2\gamma}{2}}({\mathbb{R}}^2 \times \widetilde{E}_{j,k})} \lesssim_{\gamma} j 2^{j} \|f\|^2_{L^{\frac{3+2\gamma}{2}}({\mathbb{R}}^2)}.
\end{equation}
By similar arguments as in the proof of inequality (\ref{blinear3}) based on Lemma \ref{bilinear3lemma1} and Lemma \ref{bilinear3lemma2}, (\ref{blinear3+}) can be obtained from Lemma \ref{bilinear3lemma1+} and Lemma \ref{bilinear3lemma2+} below.

\begin{lemma}\label{bilinear3lemma1+}
For each $j > mk + \epsilon^{\prime} j$, we have
\begin{equation}
\|F_{j}^{k}(f\otimes f)(y,t_{l})\|_{L^{ \infty}({\mathbb{R}}^2 \times \widetilde{E}_{j,k})} \lesssim 2^{j} \int_{\mathbb{R}^{2}}\sup_{(w_{2},z_{2})\in \mathbb{R}^{2}}|f(w_{1},w_{2})||f(z_{1},z_{2})|d w_{1}dz_{1}.
\end{equation}
\end{lemma}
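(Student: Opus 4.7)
\textbf{Proof plan for Lemma \ref{bilinear3lemma1+}.}
The plan is to mimic the proof of Lemma \ref{bilinear3lemma1}, adjusting only where the homogeneous phase function (\ref{phasefunction2}) differs from the non-homogeneous one. Observe first that the definition of $a_{j}(t_l,\xi,\zeta)$ factors as $a_{j,k,t_l}(\xi)a_{j,k,t_l}(\zeta)$ on the relevant support, so the bilinear form factors as a square,
\begin{equation*}
F_j^k(f\otimes f)(y,t_l)=\biggl(\int_{\mathbb{R}^2}e^{i(y\cdot\xi-\tilde{\Phi}(t_l,\xi,\delta))}a_{j,k,t_l}(\xi)\hat{f}(\xi)\,d\xi\biggl)^2.
\end{equation*}
Thus pointwise control of $F_j^k$ reduces to pointwise control of the Fourier integral operator inside the square; this is exactly the object handled by the angular decomposition in Lemma \ref{bilinear3lemma1}.

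Next, introduce the same angular decomposition at scale $2^{-j/2}$: pick $\{\kappa_\nu\}$ with $|\kappa_\nu|\sim 1$ and $|\kappa_\nu-\kappa_{\nu'}|\geq|\nu-\nu'|2^{-j/2}$, covering $\{\xi:\xi_1/\xi_2\sim 1\}$ by cutoffs $\chi_\nu$ supported in $\{|\xi_1/\xi_2-\kappa_\nu|<2^{-j/2}\}$, and set
\begin{equation*}
K_{t_l}^\nu(y)=2^{-j/2}\int_{\mathbb{R}^2}e^{i(y\cdot\xi-\tilde{\Phi}(t_l,\xi,\delta))}\chi_\nu(\xi_1/\xi_2)a_{j,k,t_l}(\xi)\,d\xi.
\end{equation*}
Expanding the square and proceeding exactly as in the proof of Lemma \ref{bilinear3lemma1} yields the pointwise majorisation
\begin{equation*}
|F_j^k(f\otimes f)(y,t_l)|\lesssim 2^{4j}\int_{\mathbb{R}^4}\biggl(\sum_\nu|K_{t_l}^\nu(y-z)|\biggl)\biggl(\sum_{\nu'}|K_{t_l}^{\nu'}(y-w)|\biggl)|f(z)||f(w)|\,dz\,dw.
\end{equation*}

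The core step is then to re-establish the kernel estimate
\begin{equation*}
\sum_\nu\int_{\mathbb{R}}|K_{t_l}^\nu(y_1-z_1,y_2-z_2)|\,dz_2\lesssim 2^{-3j/2}
\end{equation*}
for the homogeneous phase (\ref{phasefunction2}). This can be done by the same stationary-phase argument as in the proof of Lemma 2.6 of \cite{LWZ}. The key technical point, and what I expect to be the main obstacle, is that the perturbation term $-t_l^{-a_1 m}\xi_2\frac{\delta^m\phi^{(m)}(0)}{m!}(-\xi_1/(d\xi_2))^{(d+m)/(d-1)}$ may be large in absolute value on the support of $a_{j,k,t_l}$ when $j>mk$, so one cannot simply treat it as a small phase perturbation. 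Fortunately, its $\xi$-Hessian is of order $2^{-mk}\cdot 2^{-j}$, which is negligible against the principal Hessian of order $2^{-j}$ coming from $(d-1)\xi_2(-\xi_1/(d\xi_2))^{d/(d-1)}$; the same goes for the symbol-type remainder $R(t_l,\xi,\delta,d)$. Consequently the plate of concentration of $K_{t_l}^\nu$ retains the same dimensions $2^{-j/2}\times 2^{-j}$ (tangent $\times$ normal) as in the non-homogeneous case, and the same rapid-decay kernel bound holds uniformly in $t_l\in[1,2]$ and in $k$. Combining this bound in both the $z_2$- and $w_2$-integrations with the prefactor $2^{4j}$ then gives $2^{4j}\cdot 2^{-3j/2}\cdot 2^{-3j/2}=2^j$, after absorbing the remaining $z_2,w_2$ dependence into $\sup_{(w_2,z_2)\in\mathbb{R}^2}|f(w_1,w_2)||f(z_1,z_2)|$; this yields the claimed inequality.
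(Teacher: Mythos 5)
Your proposal is correct and takes essentially the same route as the paper, which simply defers to the argument of Lemma \ref{bilinear3lemma1}: angular decomposition at scale $2^{-j/2}$, a pointwise plate-type kernel bound via stationary phase, and the sum $2^{4j}\cdot 2^{-3j/2}\cdot 2^{-3j/2}=2^j$. Your additional observation that the homogeneous phase (\ref{phasefunction2}) has a perturbation whose angular Hessian is smaller by a factor $2^{-mk}$ — so the concentration plates keep dimensions $2^{-j/2}\times 2^{-j}$ uniformly in $t_l\in[1,2]$ and $k$ — is precisely the point one needs to check and correctly identifies why the same argument carries over.
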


\begin{lemma}\label{bilinear3lemma2+}
For each $j > mk + \epsilon^{\prime} j$, there holds
\begin{equation}\label{blinear3L22+}
\|F_{j}^{k}(f\otimes f)(y,t_{l})\|_{L^{2}({\mathbb{R}}^2 \times \widetilde{E}_{j,k})} \lesssim_{\gamma} j 2^{j} \biggl( \int_{\mathbb{R}^{2}} \int_{\mathbb{R}^{2}}|f(w)|^{2}|f(z)|^{2}|w-z|^{-(2\gamma-1)}dwdz \biggl)^{1/2}.
\end{equation}
\end{lemma}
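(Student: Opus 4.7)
The plan is to follow the blueprint of Lemma \ref{bilinear3lemma2}, but to replace the $2^{-j}$-scale discretization used there by the coarser $2^{-j+mk}$-scale one, working with $\widetilde{E}_{j,k}$ in place of $E_{j,k}$. This is the natural scale in view of Remark \ref{localyconstant+}: for $j>mk+\epsilon^{\prime}j$, $|\widetilde{A_{t,j}^{k}}f|$ is essentially constant on intervals of length $2^{-j+mk}$. After Plancherel in $y$ and the change of variables $(\xi,\zeta)\mapsto(\xi,\xi-\zeta)$, I would insert the bump $b(\xi_{1}/\xi_{2}-\zeta_{1}/\zeta_{2})$ and decompose it angularly as $b=\sum_{h=1}^{j/2}b_{h}$ with $b_{h}$ supported where $|\tau|\sim 2^{-h}$, and decompose $\mathcal{F}=\sum_{n\geq 0}\mathcal{F}_{-h+n}$ by the spatial separation $|y-z|$. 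Following the analytic-interpolation pattern at the end of the proof of Lemma \ref{bilinear3lemma2}, the argument then reduces to a near-diagonal $L^{2}$-orthogonality bound
$$\|T_{j}^{k}(\mathcal{F}_{-h},b_{h})\|_{L^{2}(\mathbb{R}^{2}\times\widetilde{E}_{j,k})}\lesssim 2^{j-h/2}\cdot 2^{\gamma h}\|\mathcal{F}_{-h}\|_{L^{2}(\mathbb{R}^{4})}$$
and an off-diagonal Calder\'on--Vaillancourt bound
$$\|T_{j}^{k}(\mathcal{F}_{-h+n},b_{h})\|_{L^{2}(\mathbb{R}^{2}\times\widetilde{E}_{j,k})}\lesssim 2^{3h/2-n}\cdot 2^{\gamma j/2}\|\mathcal{F}_{-h+n}\|_{L^{2}(\mathbb{R}^{4})},$$
which, combined with the geometric tail in $n$ and the dyadic sum in $h$, produce the factor $j\, 2^{j}$ in (\ref{blinear3L22+}).

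The orthogonality bound is the crux. Adapting Proposition \ref{keylemma1}, I would define the kernel $K(t_{l},t_{l^{\prime}},\xi)$ by pairing $T_{j}^{k}(\mathcal{F}_{-h},b_{h})(\cdot,t_{l})$ with its conjugate at $t_{l^{\prime}}$ and integrating in $\zeta$. The phase $\Phi_{\xi,\delta}(t_{l},t_{l^{\prime}},\zeta)$ differs from the non-degenerate case in one essential way: by (\ref{phasefunction2}) the leading term $(d-1)\xi_{2}(-\xi_{1}/(d\xi_{2}))^{d/(d-1)}$ is $t$-independent and drops out of $\tilde\Phi(t_{l^{\prime}},\cdot,\delta)-\tilde\Phi(t_{l},\cdot,\delta)$, so the dominant surviving contribution carries an extra factor $\delta^{m}=2^{-mk}$. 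After the change of variable $\zeta=B\eta$ and Taylor expansion exactly as in the proof of Proposition \ref{keylemma1}, one obtains $|\nabla_{\eta}\Phi_{h,\xi,\delta}|\sim 2^{j-2h-mk}|t_{l}-t_{l^{\prime}}|$ with matching control $|D^{\alpha}_{\eta}\Phi_{h,\xi,\delta}|\lesssim 2^{j-2h-mk}|t_{l}-t_{l^{\prime}}|$ for $|\alpha|\geq 2$, and integration by parts yields
$$|K(t_{l},t_{l^{\prime}},\xi)|\lesssim 2^{2j-h}\bigl(1+2^{j-2h-mk}|t_{l}-t_{l^{\prime}}|\bigr)^{-N}.$$
To exploit this decay I would partition $\widetilde{E}_{j,k}=\bigcup_{s=1}^{M}\widetilde{E}_{j,k,s}$ with $\widetilde{E}_{j,k,s}-\widetilde{E}_{j,k,s}\subset 2^{-j+2h+mk}\mathbb{Z}$; the Assouad bound at discretization scale $2^{-j+mk}$ inside intervals of length $2^{-j+2h+mk}$ gives $M\lesssim 2^{2\gamma h}$. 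For each fixed $s$, the Cauchy-Schwarz plus translation-invariance argument at the end of Proposition \ref{keylemma1} yields the per-class bound $2^{j-h/2}\|\mathcal{F}_{-h}\|_{L^{2}}$, and summing over $s$ contributes the extra factor $\sqrt{M}\lesssim 2^{\gamma h}$. The off-diagonal bound follows by running the Calder\'on--Vaillancourt argument of Proposition \ref{keylemma2} verbatim at fixed $t_{l}$ and summing trivially over $\widetilde{E}_{j,k}$, using $\mathrm{Card}(\widetilde{E}_{j,k})\lesssim 2^{\gamma j}$.

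The main obstacle is precisely the orthogonality step. The factor $2^{-mk}$ inherited from $\delta^{m}\phi^{(m)}(0)$ in the $t$-dependent part of the phase shrinks the effective phase gradient, and would ordinarily destroy orthogonality altogether. The coarsened discretization $\widetilde{E}_{j,k}$ at scale $2^{-j+mk}$ is exactly what compensates, since the threshold $2^{j-2h-mk}|t_{l}-t_{l^{\prime}}|\geq 1$ for useful integration by parts is reached at the lattice spacing $2^{-j+2h+mk}$; the hypothesis $j>mk+\epsilon^{\prime}j$ keeps $2^{-j+mk}$ small ($\lesssim 2^{-\epsilon^{\prime}j}$) so that $\mathrm{Card}(\widetilde{E}_{j,k})$ is tame and the gain $2^{-h/2}$ per class can still be extracted. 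Informally, the two occurrences of $2^{-mk}$ — one in the kernel estimate and one in the spacing of $\widetilde{E}_{j,k}$ — cancel exactly, which realizes the authors' earlier remark that \emph{the influence from $2^{-mk}$ is annihilated}.
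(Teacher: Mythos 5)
Your proposal is, in outline, exactly the paper's argument: Plancherel in $y$, angular decomposition via $b_h$, separation decomposition $\mathcal{F}=\sum_n\mathcal{F}_{-h+n}$, a near-diagonal orthogonality bound obtained from the kernel estimate together with a partition of $\widetilde{E}_{j,k}$ into lattice classes, and an off-diagonal bound via Calder\'on--Vaillancourt. The kernel estimate $|K(t_l,t_{l'},\xi)|\lesssim 2^{2j-h}(1+2^{j-2h-mk}|t_l-t_{l'}|)^{-N}$, the partition at spacing $2^{-j+2h+mk}$, and the count $M\lesssim 2^{2\gamma h}$ all match the paper's Propositions \ref{keylemma1+} and \ref{keylemma2+}. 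Your explanation of how the $2^{-mk}$ in the phase gradient is cancelled by the coarsened lattice $\widetilde{E}_{j,k}$ is precisely the point the paper is making.

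There is one substantive slip: you set the angular range to $1\le h\le j/2$, as in the $da_1\neq a_2$ case, but in this setting it must be $1\le h\le (j-mk)/2$, with $b_{(j-mk)/2}$ catching all $|\tau|\lesssim 2^{-(j-mk)/2}$. This is not a cosmetic change. The partition of $\widetilde{E}_{j,k}$ into lattice classes at spacing $2^{-j+2h+mk}$ only makes sense when $2^{-j+2h+mk}\le 1$, i.e.\ $h\le(j-mk)/2$; beyond that the window exceeds $[1,2]$, $M$ collapses to $1$, and the kernel decay scale $2^{-(j-2h-mk)}$ drops below the minimal lattice spacing $2^{-j+mk}$, so the integration-by-parts gain never engages. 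The endpoint $h=(j-mk)/2$ is exactly where the $\widetilde{E}_{j,k}$-spacing matches the kernel's decay length, and the paper handles it by plain Cauchy--Schwarz and Plancherel. In short, $(j-mk)/2$ is the scale at which the two $2^{-mk}$'s — the one in the phase and the one in the lattice spacing — balance, and using $j/2$ instead leaves the range $(j-mk)/2<h\le j/2$ unaccounted for in Proposition \ref{keylemma1+}'s analogue. You should also replace $\mathrm{Card}(\widetilde{E}_{j,k})\lesssim 2^{\gamma j}$ by the sharper $\lesssim 2^{\gamma(j-mk)}$ coming from the Assouad bound at scale $2^{-j+mk}$; with the correct $h$-range, both the near- and off-diagonal sums then close to give the factor $j\,2^j$ exactly as in the paper.
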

Employing the same argument as the proof of Lemma \ref{bilinear3lemma1}, we can obtain Lemma \ref{bilinear3lemma1+}. We state the proof of Lemma \ref{bilinear3lemma2+} in the rest of this section.

\textbf{Proof of Lemma \ref{bilinear3lemma2+}.} By a change of variables and Plancherel's theorem, it suffices to show
\begin{equation}\label{blinear3L22+}
\|T_{j}^{k}(\mathcal{F}, b)(\xi,t_{l})\|_{L^{2}({\mathbb{R}}^2 \times \widetilde{E}_{j,k})} \lesssim_{\gamma} j 2^{j} \biggl( \int_{\mathbb{R}^{2}} \int_{\mathbb{R}^{2}}|f(w)|^{2}|f(z)|^{2}|w-z|^{-(2\gamma-1)}dwdz \biggl)^{1/2},
\end{equation}
where $b$ is a non-negative bump function such that supp $b \subset B(0,C_{k,m})$, $\hat{\mathcal{F}}(\xi-\zeta, \zeta) =\hat{f}(\xi - \zeta)\hat{f}(\zeta)$ and
\begin{align}
T_{j}^{k}(\mathcal{F}, b)(\xi,t_{l})&=  \int_{\mathbb{R}^{2}}e^{ i[\frac{(\xi_{1}-\zeta_{1})^{2}}{2(\xi_{2}-\zeta_{2})} + \frac{\zeta_1^2}{2\zeta_2}+(-1)^{m+1}\frac{{\phi}^{(m)}(0)}{m!}\delta^{m}( \frac{(\xi_{1}-\zeta_{1})^{m+2}}{t_{l}^m(\xi_{2}-\zeta_{2})^{m+1}} +\frac{\zeta_1^{m+2}}{t_{l}^m\zeta_2^{m+1} }) + R(t_{l},\xi-\zeta,\zeta, \delta)]} \nonumber\\
&\quad \quad \times b \biggl(\frac{\xi_{1}}{\xi_{2}}-\frac{\zeta_{1}}{\zeta_{2}} \biggl)a_{j}(t_{l},\xi-\zeta, \zeta) \hat{\mathcal{F}}(\xi-\zeta,\zeta) d\zeta. \nonumber
\end{align}
Then we decompose $T_{j}^k(\mathcal{F},b)(\xi,t_{l}) = \sum_{h=1}^{(j-mk)/2}T_{j}^k(\mathcal{F},b_{h})(\xi,t_{l})$, where $b_{h}$'s are even and supp $b_{h} \subset \{ \tau \in \mathbb{R}: |\tau| \sim 2^{-h} \}$ for $1 \le h < \frac{(j-mk)}{2}$ and supp $b_{\frac{j-mk}{2}} \subset \{ \tau \in \mathbb{R}: |\tau| \lesssim 2^{-(j-mk)/2} \}$. For fixed $h$, we further decompose $\mathcal{F} = \sum_{n=0}^{\infty} \mathcal{F}_{-h+n}$ with supp $\mathcal{F}_{-h} \subset \{(w,z) \in \mathbb{R}^{2} \times \mathbb{R}^{2}: |z-w| \lesssim 2^{-h}\}$ and supp $\mathcal{F}_{-h+n} \subset \{(w,z) \in \mathbb{R}^{2} \times \mathbb{R}^{2}: |z-w| \sim 2^{-h+n}\}$, $n \ge 1$. We have the following two estimates.

\begin{proposition}\label{keylemma1+}
For each $j > mk + \epsilon^{\prime} j$  and $1 \le h \le \frac{j-mk}{2}$, we have
\begin{equation}
\|T_{j}^{k}(\mathcal{F}_{-h}, b_{h})\|_{L^{2}(\mathbb{R}^{2} \times \widetilde{E}_{j,k})} \lesssim 2^{j-\frac{h}{2}} 2^{\gamma h} \|\mathcal{F}_{-h}\|_{L^{2}(\mathbb{R}^{2} \times \mathbb{R}^{2})}.
\end{equation}
\end{proposition}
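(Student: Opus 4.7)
\textbf{Proof plan for Proposition \ref{keylemma1+}.} The strategy is to follow the blueprint of Proposition \ref{keylemma1}, paying careful attention to how the identity $da_1 = a_2$ redistributes the $t$-derivative of the phase and why the coarser sampling scale $2^{-j+mk}$ of $\widetilde{E}_{j,k}$ (forced by Remark \ref{localyconstant+}) is exactly what is needed. The endpoint $h = (j-mk)/2$ is handled by Cauchy--Schwarz in $t_l$ and Plancherel in $\xi$, as in Proposition \ref{keylemma1}. For the main range $1 \le h < (j-mk)/2$, duality and $TT^{\ast}$ reduce matters to controlling the kernel
\[
K(t_l,t_{l'},\xi) = \int_{\mathbb{R}^2} e^{i\Phi_{\xi,\delta}(t_l,t_{l'},\zeta)} b_h^2\!\left(\frac{\xi_1}{\xi_2}-\frac{\zeta_1}{\zeta_2}\right) a_{j,k,t_l,t_{l'}}(\xi,\zeta)\,d\zeta,
\]
where $\Phi_{\xi,\delta}(t_l,t_{l'},\zeta) = \tilde{\Phi}(t_{l'},\xi-\zeta,\delta) - \tilde{\Phi}(t_l,\xi-\zeta,\delta) + \tilde{\Phi}(t_{l'},\zeta,\delta) - \tilde{\Phi}(t_l,\zeta,\delta)$ is the difference phase.

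The structural novelty is that, under $da_1 = a_2$, the leading homogeneous piece $(d-1)\xi_2(-\xi_1/(d\xi_2))^{d/(d-1)}$ of $\tilde{\Phi}$ carries no $t$-dependence; see (\ref{phasefunction2}). Hence $\Phi_{\xi,\delta}$ inherits $t$-dependence only from the perturbation $t^{-a_1 m}\delta^m \xi_2(-\xi_1/(d\xi_2))^{(d+m)/(d-1)}$ and the higher-order remainder $R(t,\xi,\delta,d)$. Changing variables $\zeta = B\eta$ with $B = \bigl(\begin{smallmatrix} 2^{-h}\xi_2 & \xi_1 \\ 0 & \xi_2 \end{smallmatrix}\bigr)$ and Taylor expanding in $\eta$, the mean value estimate $|t_l^{-a_1 m} - t_{l'}^{-a_1 m}| \sim |t_l - t_{l'}|$ on $[1,2]$ together with $\xi_2 \sim 2^j$, $\delta^m = 2^{-mk}$ and the $2^{-2h}$ from linearization yields
\[
|\nabla_\eta \Phi_{h,\xi,\delta}(t_l,t_{l'},\eta)| \sim 2^{j-mk-2h}|t_l - t_{l'}|, \qquad |D_\eta^\alpha \Phi_{h,\xi,\delta}| \lesssim 2^{j-mk-2h}|t_l-t_{l'}|,\ |\alpha|\ge 2.
\]
Since $a_{j,k,h,t_l,t_{l'},\xi}$ is a symbol of order zero and $|\det B| \sim 2^{-h}\xi_2^2 \sim 2^{2j-h}$, repeated integration by parts in $\eta$ gives the kernel bound
\[
|K(t_l,t_{l'},\xi)| \le 2^{2j-h}\,\frac{C_N}{(1 + 2^{j-mk-2h}|t_l - t_{l'}|)^N}, \qquad N \gg 1.
\]

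The effective almost-orthogonality scale is therefore $2^{-j+mk+2h}$, which is precisely the spacing of $\widetilde{E}_{j,k}$ dilated by $2^{2h}$. We decompose $\widetilde{E}_{j,k} = \bigcup_{s=1}^M \widetilde{E}_{j,k,s}$ with each piece satisfying $\widetilde{E}_{j,k,s} - \widetilde{E}_{j,k,s} \subset 2^{-j+mk+2h}\mathbb{Z}$; applying the upper Assouad condition (\ref{Assouad}) at scale $\delta = 2^{-j+mk}$ inside intervals of length $2^{-j+mk+2h}$ yields $M \lesssim 2^{2\gamma h}$. Running the standard $TT^{\ast}$ argument of Proposition \ref{keylemma1} inside each $\widetilde{E}_{j,k,s}$ (Cauchy--Schwarz on the inner sum, followed by summing the geometric decay $(1+2^{j-mk-2h}|t|)^{-N}$ over $t \in 2^{-j+mk+2h}\mathbb{Z}$) produces the per-piece bound $2^{j - h/2}\|\mathcal{F}_{-h}\|_{L^2}$, and Cauchy--Schwarz over the $M$ pieces costs $M^{1/2} \lesssim 2^{\gamma h}$, giving the claimed inequality.

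The main conceptual obstacle is the $2^{-mk}$ loss in the phase gradient, which would naively destroy the orthogonality whenever $j - mk - 2h < 0$. The resolution is twofold: first, the hypothesis $j > mk + \epsilon' j$ together with the range $h \le (j-mk)/2$ keeps us in the regime $j - mk - 2h \ge 0$; second, the locally constant property of Remark \ref{localyconstant+} forces us to sample $t$ only on the $2^{-j+mk}$-grid (rather than the finer $2^{-j}$-grid used in the $da_1 \neq a_2$ setting), so the $2^{-mk}$ penalty in the kernel is exactly cancelled by the corresponding coarsening of $\widetilde{E}_{j,k}$, and no orthogonality is lost.
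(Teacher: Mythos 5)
Your proposal is correct and follows essentially the same route as the paper: the case $h=(j-mk)/2$ via Cauchy--Schwarz and Plancherel, then the kernel bound $|K(t_l,t_{l'},\xi)|\lesssim 2^{2j-h}(1+2^{j-mk-2h}|t_l-t_{l'}|)^{-N}$ obtained by the change of variables $\zeta=B\eta$ and integration by parts, combined with the Assouad count $\mathrm{Card}(\widetilde{E}_{j,k}\cap I)\lesssim 2^{2\gamma h}$ for $|I|\lesssim 2^{-j+mk+2h}$. Your closing remarks correctly identify the key point that the $2^{-mk}$ loss in the phase gradient (from the vanishing of the $t$-dependence in the leading homogeneous term when $da_1=a_2$) is exactly offset by sampling $\widetilde{E}_{j,k}$ on the coarser $2^{-j+mk}$-grid, which is the same observation the paper makes implicitly via Remark \ref{localyconstant+} and the restriction $h\le (j-mk)/2$.
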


\begin{proposition}\label{keylemma2+}
For each $j > mk + \epsilon^{\prime} j$, $n \ge 1$ and $1 \le h \le \frac{j-mk}{2}$, there holds
\begin{equation}
\|T_{j}^{k}(\mathcal{F}_{-h+n}, b_{h})\|_{L^{2}(\mathbb{R}^{2} \times \widetilde{E}_{j,k})} \lesssim 2^{\frac{3}{2}h-n} 2^{\frac{\gamma j}{2}}2^{-\frac{\gamma mk}{2}} \|\mathcal{F}_{-h+n}\|_{L^{2}(\mathbb{R}^{2} \times \mathbb{R}^{2})}.
\end{equation}
\end{proposition}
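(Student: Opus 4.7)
My plan is to follow the proof of Proposition \ref{keylemma2} almost verbatim, exploiting that that argument operates at a single $t_l$ and couples to the set-of-times only through the cardinality of the indexing set. The two ingredients I will combine are (i) a pointwise-in-$t_l$ estimate
\[|T_j^k(\mathcal{F}_{-h+n}, b_h)(\xi, t_l)| \lesssim 2^{\frac{3h}{2}-n}\|\hat{\mathcal{F}}_{-h+n}(\xi, \xi-\zeta)\|_{L^2_\zeta(\mathbb{R}^2)},\]
uniform for $t_l \in \widetilde{E}_{j,k}$, and (ii) the cardinality bound $\text{Card}(\widetilde{E}_{j,k}) \lesssim 2^{\gamma(j-mk)}$, which follows directly from the Assouad-dimension hypothesis applied to the intervals of length $2^{-j+mk}$ meeting $E \subset [1,2]$. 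Squaring (i), integrating in $\xi$, applying Plancherel on the right, and summing over $t_l$ using (ii) yields
\[\|T_j^k(\mathcal{F}_{-h+n}, b_h)\|^2_{L^2(\mathbb{R}^2 \times \widetilde{E}_{j,k})} \lesssim 2^{\gamma(j-mk)} \cdot 2^{3h-2n} \|\mathcal{F}_{-h+n}\|^2_{L^2(\mathbb{R}^4)},\]
and the proposition follows after taking square roots and rewriting $2^{\gamma(j-mk)/2}$ as $2^{\gamma j/2}\,2^{-\gamma mk/2}$. The second factor is precisely the promised $2^{-\gamma mk/2}$, and is the quantitative manifestation of the heuristic, mentioned in the introduction, that the $2^{-mk}$ damping in the phase is ``annihilated'' by discretising $E$ on the finer scale $2^{-j+mk}$.

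The pointwise estimate (i) is proved by repeating the $\zeta$-integration-by-parts argument used in Proposition \ref{keylemma2}: peel off $e^{iw\cdot\zeta}$ via integration by parts in $\zeta$, interchange the $\zeta$- and $w$-integrations, change variables $w = y-z$, and identify the result as an oscillatory pseudodifferential operator with symbol $a_{t_l,j,k,h,\xi}(\zeta,w)$ acting on $\hat{\mathcal{F}}^y_{-h+n}(\xi, \xi-\cdot)$. Cauchy-Schwarz in $\zeta$ produces the factor $2^{j-h/2}$ (the square root of the volume of the $\zeta$-region dictated by $b_h$), and the Calder\'on-Vaillancourt theorem supplies the bound (\ref{offdianol}), accounting for the remaining $2^{2h-j-n}$. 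The new ingredient compared to Proposition \ref{keylemma2} is that $\tilde{\Phi}$ is now given by (\ref{phasefunction2}), so I would re-derive the symbol estimate (\ref{quasidifferntialopertor}) with the corresponding phase $\Phi_{\xi,w,t_l,\delta}(\zeta) = w\cdot\zeta - \tilde{\Phi}(t_l, \xi-\zeta, \delta) - \tilde{\Phi}(t_l, \zeta, \delta)$. The key derivative estimates to verify are $|\nabla_\zeta \Phi_{\xi,w,t_l,\delta}| \sim 2^{-h+n}$ and $|D^\alpha_\zeta \Phi_{\xi,w,t_l,\delta}| \lesssim 2^{-j}$ for $|\alpha|=2$. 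Since the $t$-independent leading term $(d-1)\xi_2(-\xi_1/(d\xi_2))^{d/(d-1)}$ in (\ref{phasefunction2}) produces the same gradient and Hessian behaviour as in Proposition \ref{keylemma2} (using $|\xi_1/\xi_2 - \zeta_1/\zeta_2| \sim 2^{-h}$ from the support of $b_h$ and $|w| \sim 2^{-h+n}$ with $n \ge 1$), while the $t$-dependent correction carries the extra damping $\delta^m = 2^{-mk}$ that makes it negligible under the standing hypothesis $j > mk + \epsilon' j$, both estimates are inherited from the corresponding estimates in Proposition \ref{keylemma2}.

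The principal obstacle is the final bookkeeping step: one must check rigorously that, for every $\xi, \zeta, w$ in the relevant supports, the $2^{-mk}$-damped correction in (\ref{phasefunction2}) neither cancels the leading gradient nor inflates the Hessian beyond the scale $2^{-j}$ fixed by the $t$-independent piece. Once this quantitative perturbation check is carried out, (\ref{quasidifferntialopertor}) holds uniformly in $t_l$ and $k$, the Calder\'on-Vaillancourt step supplies the pointwise bound (i), and the cardinality-based Cauchy-Schwarz summation in $t_l$ outlined above concludes the proof.
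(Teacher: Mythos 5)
Your proposal is correct and follows essentially the same route as the paper's own proof: a uniform-in-$t_l$ oscillatory integral bound obtained via integration by parts in $\zeta$, the Calder\'on–Vaillancourt theorem, and Cauchy–Schwarz (giving the factor $2^{3h/2-n}$), combined with the cardinality bound $\mathrm{Card}(\widetilde{E}_{j,k}) \lesssim 2^{\gamma(j-mk)}$ for the discretized dilation set on the scale $2^{-j+mk}$. The perturbation check you flag as "the principal obstacle" is also handled only briefly in the paper (the phase is declared "a small perturbation" of the $t$-independent quadratic piece), and your observation that the $\delta^m = 2^{-mk}$ damping keeps the correction subordinate to the leading term is the correct justification.
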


If the above two propositions hold true, then
\begin{align}
&\|T_{j}^{k}(\mathcal{F}, b)(y,t_{l})\|_{L^{2}({\mathbb{R}}^2 \times \widetilde{E}_{j,k})} \nonumber\\
&\le  \sum_{h=1}^{\frac{j-mk}{2}}
\|T_{j}^{k}(\mathcal{F}_{-h}, b_{h})(y,t_{l})\|_{L^{2}({\mathbb{R}}^2 \times \widetilde{E}_{j,k})} +  \sum_{h=1}^{\frac{j-mk}{2}} \sum_{n=1}^{\infty} \|T_{j}^{k}(\mathcal{F}_{-h+n}, b_{h})(y,t_{l})\|_{L^{2}({\mathbb{R}}^2 \times \widetilde{E}_{j,k})} \nonumber\\
&\le  \sum_{h=1}^{\frac{j-mk}{2}} 2^{j}\biggl( \int_{\mathbb{R}^{2}} \int_{\mathbb{R}^{2}}|f(w)|^{2}|f(z)|^{2}|w-z|^{-(2\gamma-1)}dwdz \biggl)^{1/2} \nonumber\\
&\quad \quad + \sum_{h=1}^{\frac{j-mk}{2}} \sum_{n=1}^{\infty} 2^{\frac{3}{2}h-n} 2^{\frac{\gamma j}{2}}2^{-\frac{\gamma mk}{2}} 2^{(-h+n)(\gamma - \frac{1}{2})}  \biggl( \int_{\mathbb{R}^{2}} \int_{\mathbb{R}^{2}}|f(w)|^{2}|f(z)|^{2}|w-z|^{-(2\gamma-1)}dwdz \biggl)^{1/2} \nonumber\\
 % &\le  j 2^{j}\biggl( \int_{\mathbb{R}^{2}} \int_{\mathbb{R}^{2}}|f(w)|^{2}|f(z)|^{2}|w-z|^{-(2\gamma-1)}dwdz \biggl)^{1/2} \nonumber\\
 % &\quad \quad + \sum_{h=1}^{\frac{j-mk}{2}}  2^{(2-\gamma)h} 2^{\frac{\gamma j}{2}}2^{-\frac{\gamma mk}{2}} \biggl( \int_{\mathbb{R}^{2}} \int_{\mathbb{R}^{2}}|f(w)|^{2}|f(z)|^{2}|w-z|^{-(2\gamma-1)}dwdz \biggl)^{1/2} \nonumber\\
&\lesssim  j 2^{j}\biggl( \int_{\mathbb{R}^{2}} \int_{\mathbb{R}^{2}}|f(w)|^{2}|f(z)|^{2}|w-z|^{-(2\gamma-1)}dwdz \biggl)^{1/2}.
\end{align}
$\hfill\square$

Now we are reduced to proving Proposition \ref{keylemma1+} and \ref{keylemma2+}.

\textbf{Proof of Proposition \ref{keylemma1+}.} Noticing that  $t_l-t_{l'}\geq 2^{-j+mk}$ if $l\not=l'$, then the case when $h= \frac{j-mk}{2}$ follows from the Cauchy-Schwarz inequality and Plancherel's theorem. To prove the case when $1\le h<\frac{j-mk}{2}$, we define
\[K(t_{l},t_{l^{\prime}}, \xi) =  \int_{\mathbb{R}^{2}}e^{i   \Phi_{t_{l},t_{l^{\prime}}, \xi}(\zeta)}b_{h} \biggl(\frac{\xi_{1}}{\xi_{2}}-\frac{\zeta_{1}}{\zeta_{2}} \biggl)a_{j,t_{l},t_{l^{\prime}}}(\xi, \zeta) d\zeta,\]
where  $a_{j,t_{l},t_{l^{\prime}}}(\xi, \zeta)= a_{j}(t_{l},\xi-\zeta, \zeta) a_{j}(t_{l^{\prime}},\xi-\zeta, \zeta) $, and
\begin{align}
\Phi_{t_{l},t_{l^{\prime}},\xi}(\zeta) &=(t_{l}^{-m} - t_{l^{\prime}}^{-m}) (-1)^{m+1}\frac{{\phi}^{(m)}(0)}{m!}\delta^{m} \biggl(\frac{(\xi_{1}-\zeta_{1})^{m+2}}{(\xi_{2}-\zeta_{2})^{m+1}} + \frac{\zeta_1^{m+2}}{\zeta_2^{m+1} }\biggl)+R(t_{l},t_{l^{\prime}},\xi-\zeta,\zeta, \delta).\nonumber
\end{align}
By the similar argument as in the proof of Proposition \ref{keylemma1}, changing variables $\zeta= B\eta$ with $B=\left(
\begin{array} {lcr}
2^{-h}\xi_{2} & \xi_{1}  \\
0 & \xi_{2}
\end{array}
\right)$,  we get
\[K(t_{l},t_{l^{\prime}}, \xi) = \xi_{2}^{2} 2^{-h}  \int_{\mathbb{R}^{2}}e^{i \Phi_{h,t_{l},t_{l^{\prime}}, \xi}(\eta)}\tilde{b} \biggl(\frac{\eta_{1}}{\eta_{2}} \biggl)a_{j,h,t_{l},t_{l^{\prime}},\xi}(\eta) d\eta,\]
where $\Phi_{h,t_{l},t_{l^{\prime}},\xi}(\eta) = \Phi_{t_{l},t_{l^{\prime}},\xi}(B\eta)$, $a_{j,h,t_{l},t_{l^{\prime}},\xi}(\eta) = a_{j, t_{l},t_{l^{\prime}}}(\xi, B\eta)  $, $\tilde{b}$ is a bump function supported in $\{x \in \mathbb{R}: |x| \sim 1\}$.
By Taylor's expansion, $\Phi_{h,t_{l},t_{l^{\prime}}, \xi}(\eta)$ satisfies
\[|\nabla_{\eta} \Phi_{h, t_{l},t_{l^{\prime}},\xi}(\eta)| \sim 2^{j-mk-2h}|t_{l}^{-m}-t_{l^{\prime}}^{-m}|,\]
and
\[|D_{\eta}^{\alpha} \Phi_{h, t_{l},t_{l^{\prime}},\xi}(\eta)| \lesssim 2^{j-mk-2h} |t_{l}^{-m}-t_{l^{\prime}}^{-m}|, \quad \quad |\alpha| \ge 2.\]
Since $a_{j,h,t_{l},t_{l^{\prime}},\xi}$ is a symbol of order zero and $\widetilde{E}_{j,k} \subset [1,2]$, integration by parts implies  that
\begin{equation}\label{orthognality1+}
|K(t_{l},t_{l^{\prime}}, \xi)| \le 2^{2j-h} \frac{C_{N}}{(1+2^{j-mk-2h}|t_{l}-t_{l^{\prime}}|)^{N}}, \quad \quad N \gg 1
\end{equation}
for each $\xi$ and  $t_{l},t_{l^{\prime}} \in \widetilde{E}_{j,k}$. Notice that for each interval with length $\lesssim 2^{-j +mk +2h}$,
\[Card(\widetilde{E}_{j,k}\cap I) \lesssim 2^{2\gamma h}. \]
Then we finish the proof of Proposition \ref{keylemma1+} by the similar argument as in {\cite[Proposition 4.2]{Roos}}, see also Proposition \ref{keylemma1} in this paper.
$\hfill\square$

\textbf{Proof of Proposition \ref{keylemma2+}.}  Denote $\tilde{a}_{t_{l},j,h,\xi}( \zeta)= b_{h}(\frac{\xi_{1}}{\xi_{2}}-\frac{\zeta_{1}}{\zeta_{2}})a_{j}(t_{l}, \zeta - \xi, \zeta)$,
\begin{align}
\Phi_{\xi,w}(\zeta)&= w \cdot \zeta + \frac{(\xi_{1}-\zeta_{1})^{2}}{2(\xi_{2}-\zeta_{2})} + \frac{\zeta_{1}^{2}}{2\zeta_{2}}\nonumber\\
&\quad  + (-1)^{m+1}\frac{{\phi}^{(m)}(0)}{m!}\delta^{m} \biggl(\frac{(\xi_{1}-\zeta_{1})^{m+2}}{t_{l}^{m}(\xi_{2}-\zeta_{2})^{m+1}} + \frac{\zeta_1^{m+2}}{t_{l}^{m}\zeta_2^{m+1} }\biggl) + R(t_{l},\xi-\zeta,\zeta, \delta),\nonumber
\end{align}
which can be viewed as a small perturbation of $ w \cdot \zeta + \frac{(\xi_{1}-\zeta_{1})^{2}}{2(\xi_{2}-\zeta_{2})} + \frac{\zeta_{1}^{2}}{2\zeta_{2}}$.
Then as in the proof of Proposition \ref{keylemma2}, there holds
\[|\nabla_{\zeta}\Phi_{\xi,w}(\zeta)| \sim 2^{-h+n}\]
and $|D^{\alpha}_{\zeta}\Phi_{\xi,w}(\zeta)| \lesssim 2^{-j}$ for each $\alpha$ such that $|\alpha| = 2$. Moreover,
$|\nabla_{\zeta} \tilde{a}_{t_{l},j,h,\xi}(\zeta)| \lesssim 2^{h-j}$. Let
\[a_{t_{l},j,h, \xi}(\zeta,w) = i \frac{\langle \nabla_{\zeta} \tilde{a}_{t_{l},j,h, \xi}, \nabla_{\zeta} \Phi_{\xi,w} \rangle}{|\nabla_{\zeta} \Phi_{\xi,w}|^{2}} - 2i \tilde{a}_{t_{l},j,h, \xi} \frac{\langle H_{\zeta}(\Phi_{\xi,w}) \nabla_{\zeta} \Phi_{\xi,w}, \nabla_{\zeta} \Phi_{\xi,w} \rangle}{|\nabla_{\zeta} \Phi_{\xi,w}|^{4}}. \]
Then we have
\[| a_{t_{l},j,h, \xi}(\zeta,w) | \le 2^{2h-j-n},\]
and
\[|D_{\zeta}^{\beta}D_{w}^{\alpha} [a_{t_{l},j,h, \xi}(\zeta,w)] | \le 2^{2h-j-n} 2^{(h-n)|\alpha|} 2^{(h-j)|\beta|}   \]
for each multi-index $\alpha$ and $\beta$. Since $h \le \frac{j-mk}{2}$ and $|\zeta| \sim 2^{j}$, we have
\begin{equation}
|D_{\zeta}^{\beta}D_{w}^{\alpha} [a_{t_{l},j,h, \xi}(\zeta,w)] | \le 2^{2h-j-n} (1+|\zeta|)^{-\frac{1}{2}(|\beta|-|\alpha|)}
\end{equation}
for each multi-index $\alpha$ and $\beta$. Then it follows from \cite{CV} that
\begin{equation}\label{offdianol+}
\biggl\| \int_{\mathbb{R}^{2}}e^{i w \cdot \zeta} a_{t_{l}, j,h,\xi}(\zeta,w) \psi(2^{h-n}w) \hat{\mathcal{F}}^{y}_{-h+n}(\xi, \xi +w)dw \biggl\|_{L_{\zeta}^{2}(\mathbb{R}^{2})} \lesssim 2^{2h-j-n} \|\hat{\mathcal{F}}_{-h+n}(\xi, \xi + \zeta)\|_{L_{\zeta}^{2}(\mathbb{R}^{2})},
\end{equation}
where $\psi$ is a bump function supported in $\{x \in \mathbb{R}: |x| \sim 1\}$.
Then Proposition \ref{keylemma2+} follows from the similar arguments as in the proof of Proposition \ref{keylemma2}, and the fact that $Card(\widetilde{E}_{j,k}) \lesssim 2^{\gamma j- \gamma mk}$. %More details can be found in  the proof of {\cite[Proposition 4.3]{Roos}}.
$\hfill\square$

\section{Appendix}
This appendix is dedicated to the proof of inequalities (\ref{smallgamma1+}) and (\ref{smallgamma1++}), and Lemma \ref{kernel estimate3} below will be used in the proof, which is derived from Lemma 2.6 in \cite{LWZ}.
\begin{lemma}\label{kernel estimate3}
Let $\lambda \gg 1$, $\tilde{\chi}$ and $\psi$ be smooth cutoff functions supported in $[1,2]$. If $\Psi$ is a smooth function that is defined on $[1,2]$, and $|\Psi^{\prime \prime}(s)| \sim 1$ for each $s \in [1,2]$, then we have the uniform pointwise estimate
\[ \biggl|\int_{\mathbb{R}^{2}}e^{i\lambda[y\cdot\eta + \eta_{2}\Psi(\frac{\eta_{1}}{\eta_{2}})]} \tilde{\chi}(\frac{\eta_{1}}{\eta_{2}})\psi(|\eta|)d\eta \biggl| \lesssim \lambda^{-\frac{1}{2}}, \quad y \in \mathbb{R}^{2},\]
where the implied constant is independent of $\lambda$ and $y$.
\end{lemma}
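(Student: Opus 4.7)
The plan is to convert this 2D oscillatory integral into a 1D oscillatory integral with parameter, and then apply van der Corput's lemma with the second-derivative hypothesis. The key observation is that the phase $y\cdot\eta + \eta_2\Psi(\eta_1/\eta_2)$ is homogeneous of degree one in $\eta$, so after a natural change of variables the oscillation in the angular direction separates cleanly from the radial direction.

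First, I would change variables $(\eta_1,\eta_2)\mapsto(s,u)$ with $s=\eta_1/\eta_2$ and $u=\eta_2$; the Jacobian is $u$. By the support assumptions (namely $s\in[1,2]$ from $\tilde{\chi}$ and $|\eta|=u\sqrt{1+s^2}\in[1,2]$ from $\psi$), one reduces to an integral over a bounded rectangle in $(s,u)$ with $u$ bounded away from $0$:
\[
I(y,\lambda)=\int\!\!\int e^{i\lambda u\,[\,y_1 s+y_2+\Psi(s)\,]}\,\tilde{\chi}(s)\,\psi\!\bigl(u\sqrt{1+s^2}\bigr)\,u\,ds\,du.
\]

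Second, for each fixed $u\sim 1$, I would view the inner $s$-integral as a 1D oscillatory integral with phase $\phi_{u,y}(s)=\lambda u[y_1 s+y_2+\Psi(s)]$ and smooth compactly supported amplitude $\tilde{\chi}(s)\psi(u\sqrt{1+s^2})$. Since
\[
|\phi_{u,y}''(s)|=\lambda u\,|\Psi''(s)|\sim\lambda
\]
uniformly in $(u,y)$, van der Corput's lemma (the second-derivative version) gives
\[
\biggl|\int_{\mathbb{R}} e^{i\phi_{u,y}(s)}\,\tilde{\chi}(s)\,\psi\!\bigl(u\sqrt{1+s^2}\bigr)\,ds\biggr|\lesssim\lambda^{-1/2},
\]
with implied constant independent of $y$ and $u$, because $y_1,y_2$ appear only linearly in $\phi_{u,y}$ and therefore do not affect the second derivative, and because the amplitude has uniformly bounded total variation in $s$.

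Finally, integrating this pointwise bound against $u\,du$ over the bounded $u$-range yields the claimed estimate $|I(y,\lambda)|\lesssim\lambda^{-1/2}$. There is no serious obstacle here: the only point requiring care is ensuring that the lower bound on $|\Psi''|$ survives as an effective second-derivative bound on $\phi_{u,y}$ uniformly in $y$, which is automatic because $y$ enters the phase only through the linear term $u(y_1 s+y_2)$. The exponent $-1/2$ is sharp and reflects the fact that the 2D phase $\Phi(\eta)=y\cdot\eta+\eta_2\Psi(\eta_1/\eta_2)$, being homogeneous of degree one, has a degenerate Hessian in the radial direction at any stationary point, so only a one-dimensional curvature is available.
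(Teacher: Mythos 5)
Your argument is correct and self-contained. The paper itself does not prove this lemma; it defers to Lemma~2.6 of Li--Wang--Zhai \cite{LWZ}, so there is no in-text proof to compare against. Your route --- the change of variables $(s,u)=(\eta_1/\eta_2,\eta_2)$ (with $d\eta=|u|\,ds\,du$, so one should sum the contributions from $u>0$ and $u<0$, each of which is handled identically), followed by the second-derivative van der Corput lemma applied to the $s$-integral at fixed $u\sim 1$ --- is the standard way to obtain such a bound and is complete as written. Two small points that are implicitly used and worth making explicit: (i) van der Corput requires $\phi_{u,y}''$ to have a fixed sign on the domain of integration, which follows since $|\Psi''|\sim 1$ and $\Psi''$ is continuous on $[1,2]$, hence of constant sign, and $u$ has constant sign on each half; (ii) the amplitude $\tilde\chi(s)\psi(|u|\sqrt{1+s^2})\,|u|$ has derivative in $s$ bounded uniformly in $u\sim 1$, giving the uniform total-variation control that the lemma-with-amplitude needs. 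With those spelled out, the bound $\lesssim\lambda^{-1/2}$ follows uniformly in $y$ exactly as you say, since $y$ enters the phase only linearly and so does not perturb the second derivative.
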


Inequalities (\ref{smallgamma1+}) and (\ref{smallgamma1++}) can be obtained by the following theorem.
\begin{theorem}\label{smallgamma+}
We have the following estimates.

(1) For each $j \ge 1$, $k \ge \log(1/\epsilon)$, we have inequality (\ref{smallgamma1+}) when $da_{1} \neq a_{2}$.

(2) When $a_{1}=1$, $a_{2}=d$ and $c =0$, inequality (\ref{smallgamma1++}) holds for each $k \ge \log(1/\epsilon)$ and $j > mk +\epsilon^{\prime} j$.
\end{theorem}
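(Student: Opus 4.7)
The plan is to follow the duality framework of \cite[Proposition 3.1]{AHRS}. By the real-interpolation predual of the Lorentz space $L^{2(1+2\gamma),\infty}$, the target estimate is equivalent to the restricted strong-type inequality
\begin{equation*}
\|T^{*}\chi_{G}\|_{L^{2}(\mathbb{R}^{2})} \lesssim \mathcal{C}_{j,k}\,|G|^{\frac{1+4\gamma}{2(1+2\gamma)}}
\end{equation*}
for every measurable $G\subset \mathbb{R}^{2}\times E_{j,k}$ (resp.\ $\mathbb{R}^{2}\times \widetilde{E}_{j,k}$) of finite measure, where $T$ is the vector-valued operator $f\mapsto((y,t_{l})\mapsto A_{t_{l},j}^{k}f(y))$ and $\mathcal{C}_{j,k}=(c2^{d\gamma k/(2(1+2\gamma))}+1)2^{-(1-2\gamma)j/(2(1+2\gamma))}$ in part (1), $\mathcal{C}_{j,k}=2^{-(1-2\gamma)j/(2(1+2\gamma))}$ in part (2). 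Squaring reduces the task to the bilinear bound $\langle TT^{*}\chi_{G},\chi_{G}\rangle \lesssim \mathcal{C}_{j,k}^{2}|G|^{(1+4\gamma)/(1+2\gamma)}$, so everything is driven by a kernel estimate for $TT^{*}$.

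A direct computation gives
\begin{equation*}
TT^{*}g(y,t_{l})=\sum_{t_{l'}}\int K^{j,k}_{t_{l},t_{l'}}(y-z)\,g(z,t_{l'})\,dz,
\end{equation*}
\begin{equation*}
K^{j,k}_{t_{l},t_{l'}}(y)=\int_{\mathbb{R}^{2}} e^{iy\cdot\xi-i\Psi(t_{l},t_{l'},\xi)}\,a_{j,k,t_{l}}(\xi)\overline{a_{j,k,t_{l'}}(\xi)}\,d\xi,
\end{equation*}
with $\Psi(t_{l},t_{l'},\xi)=c(t_{l}^{a_{2}}-t_{l'}^{a_{2}})2^{dk}\xi_{2}+\tilde{\Phi}(t_{l},\xi,\delta)-\tilde{\Phi}(t_{l'},\xi,\delta)$. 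After rescaling $\xi=2^{j}\eta$, the phase fits the template of Lemma \ref{kernel estimate3}. In part (1), the assumption $da_{1}\neq a_{2}$ forces $\partial_{t}\tilde{\Phi}$ to have non-vanishing principal part, so the coefficient of $t_{l}-t_{l'}$ in the rescaled angular phase is comparable to $(c2^{dk}+1)$, with non-degenerate second derivative. Lemma \ref{kernel estimate3} then produces a pointwise bound of the form $|K^{j,k}_{t_{l},t_{l'}}(y)|\lesssim 2^{2j}(1+2^{j}(c2^{dk}+1)|t_{l}-t_{l'}|)^{-1/2}$ times rapid decay off a suitable tube. In part (2) the same argument applies with the role of $(c2^{dk}+1)$ played by $2^{-mk}$, since the leading $t$-dependent term of $\tilde{\Phi}$ is $t^{-m}\delta^{m}\phi^{(m)}(0)/m!$ times an angular factor.

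With the kernel estimate in hand, I would dyadically decompose $E_{j,k}$ in $|t_{l}-t_{l'}|$, apply Cauchy--Schwarz in the slices of $G$, and count via the Assouad bound $N(E\cap I,\delta)\le C_{\epsilon}(\delta/|I|)^{-\gamma-\epsilon}$. For part (1) this yields $\#\{t_{l'}\in E_{j,k}:|t_{l}-t_{l'}|\le R\}\lesssim (c2^{dk}+1)^{\gamma}2^{j\gamma}R^{\gamma}$ above the spacing $2^{-j}(c2^{dk}+1)^{-1}$, and similarly for $\widetilde{E}_{j,k}$ above $2^{-j+mk}$ in part (2). Balancing the off-diagonal sum against the diagonal $L^{2}\to L^{2}$ bound $\|A^{k}_{t_{l},j}\|_{L^{2}\to L^{2}}\lesssim 2^{-j/2}$ from (\ref{L2}), and reading off the real-interpolation scaling $(p,p')=(2(1+2\gamma),2(1+2\gamma)/(1+4\gamma))$, produces both the factor $c2^{d\gamma k/(2(1+2\gamma))}+1$ and the exponent $2^{-(1-2\gamma)j/(2(1+2\gamma))}$ in the conclusion.

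The main obstacle is to verify the kernel estimate with the sharp dependence on $(c2^{dk}+1)$ (respectively $2^{-mk}$): one has to check that the rescaled phase $\Psi$ remains non-degenerate of the advertised size uniformly on the symbol support, while absorbing the remainder $R(t,\xi,\delta,d)$ produced by stationary phase in \cite{WL,LWZ}. In part (2) the threshold $j>mk+\epsilon'j$ enters precisely here, guaranteeing that the principal $t^{-m}\delta^{m}$-term dominates the remainder and that the oscillation length $2^{-j+mk}$ matches the sampling of $\widetilde{E}_{j,k}$, so that no extra power of $2^{mk}$ is lost in the stationary-phase step.
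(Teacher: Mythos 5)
Your overall framework—dualize to a $TT^{*}$ estimate, decompose dyadically in $|t_{l}-t_{l'}|$, use the Assouad counting bound in the $L^{2}\to L^{2}$ piece and a stationary-phase kernel bound in the $L^{1}\to L^{\infty}$ piece, and interpolate via Bourgain's trick—is exactly what the paper does (with $TT^{*}$ realized as $S_{j,k}=\sum_{n}S_{j,k,n}$). The gap is in the kernel estimate you claim for part (1).

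You assert that in part (1) the coefficient of $t_{l}-t_{l'}$ in the rescaled angular phase is comparable to $c2^{dk}+1$, and hence that $|K^{j,k}_{t_{l},t_{l'}}(y)|\lesssim 2^{2j}\bigl(1+2^{j}(c2^{dk}+1)|t_{l}-t_{l'}|\bigr)^{-1/2}$. This is not correct. The term $c2^{dk}(t_{l}^{a_{2}}-t_{l'}^{a_{2}})\xi_{2}$ is \emph{linear} in $\xi$: after writing $e^{i(y-z)\cdot\xi-ic2^{dk}(t_{l}^{a_{2}}-t_{l'}^{a_{2}})\xi_{2}}=e^{i(y-z-(0,c2^{dk}(t_{l}^{a_{2}}-t_{l'}^{a_{2}})))\cdot\xi}$ it only translates the kernel in $y_{2}$, and since Lemma \ref{kernel estimate3} gives a bound uniform in $y$, it contributes no decay at all. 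The angular curvature comes solely from the difference $\tilde\Phi(t_{l},\xi,\delta)-\tilde\Phi(t_{l'},\xi,\delta)$, whose leading part is $\sim|t_{l}-t_{l'}|\,\xi_{2}(-\xi_{1}/(d\xi_{2}))^{d/(d-1)}$; after $\xi=2^{j}\eta$ this yields $\lambda\sim 2^{j}|t_{l}-t_{l'}|$ and the kernel bound (\ref{kernel2}), namely $|K_{n,t_{l},t_{l'}}(y,z)|\lesssim 2^{2j}(1+2^{j}|t_{l}-t_{l'}|)^{-1/2}$, with no $c2^{dk}$ inside. The factor $c2^{d\gamma k}+1$ in the conclusion is produced entirely by the \emph{counting} bound (\ref{number1}), $\mathrm{Card}(E_{j,k,n})\lesssim 2^{n\gamma}(c2^{d\gamma k}+1)$, when the dyadic shells are taken at the kernel-decay scale $2^{-j}$ (not at the finer spacing $2^{-j}(c2^{dk}+1)^{-1}$ of $E_{j,k}$), which enters only through the $L^{2}\to L^{2}$ estimate.

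This is not a cosmetic point: if your stronger kernel estimate were fed into the Bourgain interpolation alongside your stated counting bound, the two occurrences of $c2^{dk}+1$ would cancel and you would obtain a version of (\ref{smallgamma1+}) \emph{without} the factor $c2^{d\gamma k/(2(1+2\gamma))}+1$, i.e.\ a statement strictly stronger than the target and in fact unprovable. So as written the argument is internally inconsistent, and the stated output cannot be derived from the stated inputs. Your description of part (2) is, by contrast, correct: there the leading $t$-dependent term of $\tilde\Phi$ is $t^{-m}\delta^{m}\phi^{(m)}(0)/m!$ times an angular factor, so the oscillation parameter really is $\lambda\sim 2^{j-mk}|t_{l}-t_{l'}|$ and the kernel bound (\ref{kernel2+}) carries the $2^{-mk}$ in the denominator, matching the spacing $2^{-j+mk}$ of $\widetilde{E}_{j,k}$.
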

\begin{proof}
\textbf{(1)} For any complex-valued function $g$ defined on $\mathbb{R}^{2} \times E_{j,k}$, define the operator
\[S_{j,k}g(y,t_{l}) = \sum_{t_{l^{\prime}} \in E_{j,k}} \int_{\mathbb{R}^{2}}  e^{i[y\cdot \xi - c2^{dk}(t_{l}^{a_{2}} -t_{l^{\prime}}^{a_{2}})\xi_{2} -  (\tilde{\Phi}(t_{l},\xi,\delta)-\tilde{\Phi}(t_{l^{\prime}},\xi,\delta))]} a_{j}(t_{l},t_{l^{\prime}},\xi)\hat{g}(\xi, t_{l^{\prime}})d\xi,\]
where $a_{j}(t_{l},t_{l^{\prime}},\xi) = a_{j,k,t_{l}}(\xi) \overline{a_{j,k,t_{l^{\prime}}}(\xi)}$, $\hat{g}(\xi, t_{l^{\prime}})$ means the Fourier transform of $g(y,t_{l^{\prime}})$ with respect to $y$. By duality, it suffices to prove that
\begin{equation}\label{dual1}
\|S_{j,k}g\|_{L^{2(1+2\gamma), \infty}(\mathbb{R}^{2}\times E_{j,k})} \lesssim (c2^{\frac{d\gamma k}{1+2\gamma}} +1) 2^{ -\frac{1-2\gamma}{1+2\gamma}j}  \|g\|_{L^{\frac{2+4\gamma}{1+4\gamma},1}({\mathbb{R}}^2 \times E_{j,k})}.
\end{equation}

For fixed $t_{l}$, we decompose $E_{j,k} =\bigcup_{n=0}^{N} E_{j,k,n}$, where $E_{j,k,0}=\{t_{l^{\prime}} \in E_{j,k}: |t_{l} - t_{l^{\prime}}| \le 2 \cdot 2^{-j}\}$ and $E_{j,k,n}=\{t_{l^{\prime}} \in E_{j,k}: 2^{n} 2^{-j}<|t_{l} - t_{l^{\prime}}| \le 2^{n+1} 2^{-j}\}$ for $n \ge 1$, $N$ is a finite positive number. By inequality (3.4) in \cite{AHRS}, there holds
\begin{equation}\label{number1}
Card(E_{j,k,n}) \lesssim 2^{n\gamma} (c2^{d\gamma k} +1),
\end{equation}
where $Card(E_{j,k,n})$ means the number of the elements in $E_{j,k,n}$. For each $0 \le n \le N$, we define
\[S_{j,k,n}g(y,t_{l}) = \sum_{t_{l^{\prime}} \in E_{j,k,n}} \int_{\mathbb{R}^{2}}  e^{i[y\cdot \xi - c2^{dk}(t_{l}^{a_{2}} -t_{l^{\prime}}^{a_{2}})\xi_{2} - (\tilde{\Phi}(t_{l},\xi,\delta)-\tilde{\Phi}(t_{l^{\prime}},\xi,\delta))]} a_{j}(t_{l},t_{l^{\prime}},\xi)\hat{g}(\xi, t_{l^{\prime}})d\xi.\]
By Bourgain's interpolation technique, inequality (\ref{dual1}) will be established provided that for each $0 \le n \le N$, there holds
\begin{equation}\label{blinearL2}
\|S_{j,k,n}g\|_{L^{2}(\mathbb{R}^{2}\times E_{j,k})} \lesssim (c2^{d\gamma k} +1) 2^{n\gamma} \|g\|_{L^{2}({\mathbb{R}}^2 \times E_{j,k})},
\end{equation}
and
\begin{equation}\label{bilinearlinftyl1}
\|S_{j,k,n}g\|_{L^{\infty}(\mathbb{R}^{2}\times E_{j,k})} \lesssim 2^{-n/2} 2^{2j} \|g\|_{L^{1}({\mathbb{R}}^2 \times E_{j,k})}.
\end{equation}

We first prove (\ref{blinearL2}). By Cauchy-Schwarz inequality, inequality (\ref{number1}) and Plancherel's theorem,
\begin{align}
\|S_{j,k,n}g\|_{L^{2}(\mathbb{R}^{2}\times E_{j,k})}
%&\le \biggl(\sum_{t_{l} \in E_{j,k}} 2^{n\gamma} (c2^{d\gamma k} +1) \sum_{t_{l^{\prime}} \in E_{j,k,n}}\biggl\| \int_{\mathbb{R}^{2}}  e^{i(y\cdot \xi - c2^{dk}(t_{l} - t_{l^{\prime}})\xi_{2} - (t_{l} -t_{l^{\prime}})\xi_2\tilde{\Phi}(s))} a_{j}(t_{\nu},t_{\nu^{\prime}},\xi)\hat{g}(\xi, t_{l^{\prime}})d\xi \biggl\|_{L^{2}(\mathbb{R}^{2})}^{2}\biggl)^{1/2} \nonumber\\
&\le \biggl(\sum_{t_{l} \in E_{j,k}} 2^{ n\gamma} (c2^{ d\gamma k} +1) \sum_{t_{l^{\prime}} \in E_{j,k,n}}\| g(y,t_{l^{\prime}}) \|_{L^{2}(\mathbb{R}^{2})}^{2}\biggl)^{1/2} \nonumber\\
&\le 2^{n\gamma} (c2^{d\gamma k} +1) \|g\|_{L^{2}(\mathbb{R}^{2} \times E_{j,k})},
\end{align}
then we arrive at inequality (\ref{blinearL2}).

In order to prove (\ref{bilinearlinftyl1}), we write
\[S_{j,k,n}g(y,t_{l}) = \sum_{t_{l^{\prime}} \in E_{j,k,n}} \int_{\mathbb{R}^{2}} \int_{\mathbb{R}^{2}}  e^{i[(y-z)\cdot \xi - c2^{dk}(t_{l}^{a_{2}} -t_{l^{\prime}}^{a_{2}})\xi_{2} - (\tilde{\Phi}(t_{l},\xi,\delta)-\tilde{\Phi}(t_{l^{\prime}},\xi,\delta))]} a_{j}(t_{l},t_{l^{\prime}},\xi)d\xi g(z, t_{l^{\prime}})dz. \]
Denote
\[K_{n,t_{l},t_{l^{\prime}}}(y,z) = \int_{\mathbb{R}^{2}}  e^{i[(y-z)\cdot \xi - c2^{dk}(t_{l}^{a_{2}} -t_{l^{\prime}}^{a_{2}})\xi_{2} - (\tilde{\Phi}(t_{l},\xi,\delta)-\tilde{\Phi}(t_{l^{\prime}},\xi,\delta))]} a_{j}(t_{l},t_{l^{\prime}},\xi) d\xi.\]
According to Lemma \ref{kernel estimate3}, we can obtain that for each $t_{l}$, $t_{l^{\prime}}$, $y,z$ and $0 \le n \le N$,  there holds a uniform estimate
\begin{equation}\label{kernel2}
|K_{n,t_{l},t_{l^{\prime}}}(y,z)| \lesssim  2^{2j}(1+2^{j}|t_{l}-t_{l^{\prime}}|)^{-\frac{1}{2}}.
\end{equation}
Then inequality (\ref{bilinearlinftyl1}) follows.

%Since for fixed $t_{l}$ and $t_{l^{\prime}}$, the support of $a(t_{l},t_{l^{\prime}}, \xi)$ is supported in $\{\xi \in \mathbb{R}^{2}: |\xi| \approx 2^{j}\}$, inequality (\ref{kernel2}) is trivial for $0 \le n \lesssim 1$. For $n \gg 1$, by Lemma \ref{kernel1},
%\begin{align}
%|K_{n,t_{l},t_{l^{\prime}}}(y,z)| % &=2^{2j} \biggl|\int_{\mathbb{R}^{2}}  e^{i2^{j}((y-z)\cdot \eta - c2^{dk}(t_{l} - t_{l^{\prime}})\eta_{2} - (t_{l} -t_{l^{\prime}})\eta_2\tilde{\Phi}(s))} a_{j}(t_{\nu},t_{\nu^{\prime}},2^{j}\eta)d\eta \biggl| \nonumber\\
%&=2^{2j} \biggl|\int_{\mathbb{R}^{2}}  e^{i2^{j} |t_{l} - t_{l^{\prime}}| (\frac{y-z}{|t_{l} - t_{l^{\prime}}|}\cdot \eta - c2^{dk}\frac{t_{l} - t_{l^{\prime}}}{|t_{l} - t_{l^{\prime}}|}\eta_{2} - \frac{t_{l} - t_{l^{\prime}}}{|t_{l} - t_{l^{\prime}}|}\eta_2\tilde{\Phi}(s))} a_{j}(t_{l},t_{l^{\prime}},2^{j}\eta)d\eta \biggl| \nonumber\\
%&\lesssim (2^{j}|t_{\nu} - t_{\nu^{\prime}}|)^{-1/2} \lesssim  2^{-n/2}. \nonumber
%\end{align}
%Then we finished the proof of Lemma \ref{smallgamma+}.

\textbf{(2)} As in the proof of (1), for any complex-valued function $g$ defined on $\mathbb{R}^{2} \times \widetilde{E}_{j,k}$, we define the operator
\[\widetilde{S}_{j,k}g(y,t_{l}) = \sum_{t_{l^{\prime}} \in \widetilde{E}_{j,k}} \int_{\mathbb{R}^{2}}  e^{i[y\cdot \xi -  (\tilde{\Phi}(t_{l},\xi,\delta)-\tilde{\Phi}(t_{l^{\prime}},\xi,\delta))]} a_{j}(t_{l},t_{l^{\prime}},\xi)\hat{g}(\xi, t_{l^{\prime}})d\xi.\]
 Let $t_{l}$ be fixed, $\widetilde{E}_{j,k,0}=\{t_{l^{\prime}} \in \widetilde{E}_{j,k}: |t_{l} - t_{l^{\prime}}| \le 2 \cdot 2^{-j+mk}\}$ and $\widetilde{E}_{j,k,n}=\{t_{l^{\prime}} \in \widetilde{E}_{j,k}: 2^{n} 2^{-j+mk}<|t_{l} - t_{l^{\prime}}| \le 2^{n+1} 2^{-j+mk}\}$ for $n \ge 1$. Then $\widetilde{E}_{j,k} =\bigcup_{n=0}^{N} \widetilde{E}_{j,k,n}$,  $N \le j-mk$, and
\begin{equation}\label{number2}
Card(\widetilde{E}_{j,k,n}) \lesssim 2^{n\gamma}.
\end{equation}
We define
\[\widetilde{S}_{j,k,n}g(y,t_{l}) = \sum_{t_{l^{\prime}} \in \widetilde{E}_{j,k,n}} \int_{\mathbb{R}^{2}}  e^{i[y\cdot \xi  - (\tilde{\Phi}(t_{l},\xi,\delta)-\tilde{\Phi}(t_{l^{\prime}},\xi,\delta))]} a_{j}(t_{l},t_{l^{\prime}},\xi)\hat{g}(\xi, t_{l^{\prime}})d\xi.\]
Due to the condition that $a_{1}=1$, $a_{2}=d$,  then
\[\tilde{\Phi}(t_{l},\xi,\delta)-\tilde{\Phi}(t_{l^{\prime}},\xi,\delta)=(t_{l}^{-m }-t_{l^{\prime}}^{- m }) \xi_{2}\frac{\delta^m\phi^{(m)}(0)}{ m!}
\biggl(-\frac{\xi_1}{d\xi_2}\biggl)^{\frac{d+m}{d-1}}+R(t_{l^{\prime}},\xi,\delta,d)- R(t_{l},\xi,\delta,d).\]
Then by Lemma \ref{kernel estimate3}, we have the kernel estimate
\begin{equation}\label{kernel2+}
|K_{n,t_{l},t_{l^{\prime}}}(y,z)| \lesssim  2^{2j}(1+2^{j-mk}|t_{l}-t_{l^{\prime}}|)^{-\frac{1}{2}},
\end{equation}
where
\[K_{n,t_{l},t_{l^{\prime}}}(y,z) = \int_{\mathbb{R}^{2}}  e^{i[(y-z)\cdot \xi - (\tilde{\Phi}(t_{l},\xi,\delta)-\tilde{\Phi}(t_{l^{\prime}},\xi,\delta))]} a_{j}(t_{l},t_{l^{\prime}},\xi) d\xi.\]
Based on (\ref{number2}) and (\ref{kernel2+}), we can use a similar method as in (1) to prove the validity of
\[
\|\widetilde{S}_{j,k,n}g\|_{L^{2}(\mathbb{R}^{2}\times \widetilde{E}_{j,k})} \lesssim  2^{n\gamma} \|g\|_{L^{2}({\mathbb{R}}^2 \times \tilde{E}_{j,k})},
\]
and
\[
\|\widetilde{S}_{j,k,n}g\|_{L^{\infty}(\mathbb{R}^{2}\times \widetilde{E}_{j,k})} \lesssim 2^{-n/2} 2^{2j} \|g\|_{L^{1}({\mathbb{R}}^2 \times \tilde{E}_{j,k})},
\]
and then we can complete the proof by Bourgain's interpolation technique.
\end{proof}

\end{document}